\documentclass{amsart}
\usepackage{ amssymb }
\usepackage{xcolor}
\usepackage{graphicx}
\usepackage[T1]{fontenc}
\usepackage{ stmaryrd }
\usepackage{enumitem} 

\usepackage{tikz}
\usepackage[normalem]{ulem}
\usetikzlibrary{decorations.pathreplacing}
\usetikzlibrary{snakes}
\usepackage{pict2e}

\usepackage{hyperref}
\hypersetup{colorlinks=true,linkcolor=blue,citecolor=blue}

\newcommand{\IE}{{\mathbb E}}
\newcommand{\DP}{{\mathrm P}}
\newcommand{\DE}{{\mathrm E}}

\renewcommand{\b}{\beta}

\newcommand{\eqlaw}{\stackrel{(d)}{=}}

\newcommand{\dd}{\mathrm{d}}

\newcommand{\mR}{\mathcal R}

\newtheorem{theorem}{Theorem}[section]
\newtheorem{lemma}[theorem]{Lemma}
\newtheorem{proposition}[theorem]{Proposition}

\newtheorem{corollary}[theorem]{Corollary}

\newtheorem*{theorem*}{Theorem}
\newtheorem*{lemme*}{Lemme}

\newtheorem*{proposition*}{Proposition}

\newtheorem{remark}[theorem]{Remark}

\theoremstyle{definition}

\theoremstyle{remark}

\keywords{Two dimensional subcritical directed polymer, high moments of partition functions,  planar random walk intersections, maxima of log-correlated fields}
\subjclass[2010]{Primary 82B44 secondary  82D60, 60G50, 	60H15}

\begin{document}
\title[Moments of polymers]{Moments of partition functions of 2D Gaussian polymers in the weak disorder regime - II}
\author{Cl\'ement Cosco and Ofer Zeitouni}
\address{Cl\'{e}ment Cosco,
Ceremade, Universite Paris Dauphine, Place du Mar\'{e}chal de Lattre de Tassigny, 75775 Paris Cedex 16, France}
\address{Ofer Zeitouni,
Department of Mathematics, Weizmann Institute of Sciences,
Rehovot 76100, Israel.}
\thanks{This project has received funding from the Israel Science Foundation
grant \#421/20.}
\email{clement.cosco@gmail.com, ofer.zeitouni@weizmann.ac.il}

\begin{abstract}
{Let $W_N(\b) = \DE_0\left[e^{ \sum_{n=1}^N \b\omega(n,S_n) - N\b^2/2}\right]$ be the partition function of a two-dimensional directed polymer in a random environment,
where $\omega(i,x), i\in \mathbb{N}, x\in \mathbb{Z}^2$ are i.i.d.\ standard normal and $\{S_n\}$ is the path of a random walk. With $\beta=\beta_N=\hat\b \sqrt{\pi/\log N}$ and $\hat \b\in (0,1)$ (the subcritical window),
$\log W_N(\b_N)$ is known to converge in distribution to a Gaussian  law of mean $-\lambda^2/2$ and variance $\lambda^2$, with $\lambda^2=\log (1/(1-\hat\b^2))$
 (\textit{Caravenna,  Sun, Zygouras, Ann.\ Appl.\ Probab.\ (2017)}).
We study in this paper
the moments $\IE [W_N( \b_N)^q]$ in the subcritical window, and prove a lower bound that matches for $q=O(\sqrt{\log N})$ the upper bound derived by 
us in \textit{Cosco, Zeitouni, arXiv:2112.03767}. 
The analysis is based on appropriate decouplings and a Poisson convergence
that uses the method of ``two moments suffice''.}
\end{abstract}

\maketitle

\section{Introduction and results}
Let $((S_n)_{n\geq 0},(\DP_x)_{x\in\mathbb Z^2})$ be the simple random walk on $\mathbb Z^2$. The associated expectation will be written as $\DE_x$. We let $p_n(x) = \DP_0(S_n=x)$.

Let $\omega(n,x)$, $n\in\mathbb N$, $x\in\mathbb Z^2$ be a collection of i.i.d.\ random variables distributed according to a centered Gaussian of variance one $\mathcal N(0,1)$.

{Set}
\begin{equation*} \beta_N = \frac{\hat \beta}{\sqrt{R_N}},\quad R_N = \DE_{0}^{\otimes 2} \left[ \sum_{n=1}^N \mathbf{1}_{S_n^1=S_n^2}\right]\sim \frac{1}{\pi}\log N.
\end{equation*}
where the asymptotics on $R_N$ follow from the local limit theorem $p_{2n}(0)\sim \frac 1 {\pi n}$, see e.g.\ Appendix \ref{AppendixLCLT}.
We define the \emph{normalized partition function}: 
\[W_N = \DE_0\left[ e^{\sum_{n=1}^N \beta_N \omega(n,S_n) - N \frac{\beta_N^2}{2}}\right].\]
 It is known, {see e.g. \cite[Theorem 2.8]{CaraSuZy-universalityrelev},
that for $\hat\beta<1$,} 
$\log W_N \to \mathcal N(-\lambda^2/2,\lambda^2)$,
where $\lambda^2 = \lambda(\hat \beta)^2 = -\log(1-\hat \beta^2)$, {and further, from
\cite[Theorem 1.1]{LyZy21},
we have that for any fixed $q$ integer and $\hat\beta<1$,}
\begin{equation} \label{eq-qmomentsfixed}
  \IE[W_N^q]\to_{N\to\infty} e^{\lambda^2\binom{q}{2}}.
\end{equation}

The goal of this paper is to establish a lower bound on the $q$-th moment of $W_N$ when $q$ can {increase as function of $N$, thus complementing the upper bounds
derived in \cite{CZ21}, to which we refer for motivation and applications. Of particular interest
is the case of $q^2$ of  order $\log N$.} Our starting point is the formula
\begin{equation} \label{eq:formula_moments}
  \IE[W_N^q]=\DE_0^{\otimes q} \left[e^{\beta_N^2 \sum_{(i,j)\in \mathcal C_q} \sum_{n=1}^N \mathbf{1}_{S^i_n = S^j_n}} \right],
\end{equation}
where
$\mathcal C_q = \{(i,j), 1\leq i<j\leq q\}$.
(See \cite{CZ21} for a proof of \eqref{eq:formula_moments}.) Here is our main result.
\begin{theorem} \label{th:main} Suppose that $q^2 =O(\log N)$ and $\log \log N = O(q^2)$. There exists $\varepsilon_N=\varepsilon_N(\hat \beta)\to 0$ as $N\to\infty$ such that
  $ \IE[W_N^q] \geq e^{\lambda^2 \binom{q}{2}(1-|\varepsilon_N|)}$.
  \end{theorem}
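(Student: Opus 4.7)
Starting from the representation \eqref{eq:formula_moments}, set $L_N^{i,j}=\sum_{n=1}^N \mathbf{1}_{S_n^i=S_n^j}$, so that
\[
 \IE[W_N^q]=\DE_0^{\otimes q}\Big[\exp\Big(\beta_N^2\sum_{(i,j)\in\mathcal{C}_q}L_N^{i,j}\Big)\Big].
\]
A naive application of Jensen's inequality inside the exponential yields only $e^{\binom{q}{2}\hat\beta^2}$, which falls short of the target $e^{\binom{q}{2}\lambda^2}$ because $\hat\beta^2<-\log(1-\hat\beta^2)=\lambda^2$. The missing factor encodes the log-normal fluctuations of each pairwise exponential $e^{\beta_N^2 L_N^{i,j}}$, whose mean is $\IE[W_N^2]\to e^{\lambda^2}$ by \eqref{eq-qmomentsfixed}. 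The whole strategy is therefore to establish effective \emph{asymptotic independence} of the $\binom{q}{2}$ pair contributions, which would give
\[
\IE[W_N^q]\;\geq\;(1-\varepsilon_N)\prod_{(i,j)\in\mathcal{C}_q}\IE[e^{\beta_N^2 L_N^{i,j}}]\;\longrightarrow\; e^{\binom{q}{2}\lambda^2}.
\]

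The plan is to implement this decoupling through a mesoscopic time decomposition combined with a Poisson approximation, following the scheme announced in the abstract. First, split $[1,N]$ into geometrically spaced windows $I_1,\dots,I_M$, with $M$ of polylogarithmic order, so that each window contributes pairwise expected intersection local time of order one. Within a single window, and for each pair $(i,j)\in\mathcal{C}_q$, record ``collision summary'' indicators such as whether the pair meets in specified dyadic sub-scales. Summed over pairs and windows, the first and second factorial moments of these indicators differ only through \emph{triple coincidences} $S_n^i=S_n^j=S_n^k$; in two dimensions such triples carry an extra heat-kernel factor $p_n(0)\sim 1/(\pi n)$ and are a lower-order effect in the subcritical regime. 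The ``two moments suffice'' Poisson criterion (matching first and second factorial moments) then shows that the collection of pair/window collision patterns is jointly close to a family of independent Poissons, one per pair-window. Reconstructing the exponential from these Poissonized indicators, and using the single-pair two-moment identity $\DE_0^{\otimes 2}[e^{\beta_N^2 L_{I_k}^{1,2}}]\sim e^{\lambda^2/M}$ within each window, the product over the $M$ windows yields the per-pair factor $e^{\lambda^2}$, and then independence across pairs yields the full $e^{\binom{q}{2}\lambda^2}$.

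The hard part will be the tightness of the error budget. Since $\binom{q}{2}\lambda^2$ can be of order $\log N$, any per-pair multiplicative loss $1+\delta$ compounds to $(1+\delta)^{\binom{q}{2}}$ and must satisfy $\delta=o(1/q^2)$, which in the worst case means $\delta=o(1/\log N)$. The Poisson approximation error and the triple-coincidence bound must therefore be \emph{quantitative} and uniform in $(i,j)\in\mathcal{C}_q$; the assumption $\log\log N=O(q^2)$ is what provides the slack required to close the estimate, while the fixed-$q$ regime is already covered by \eqref{eq-qmomentsfixed}. I expect the most delicate step to be the passage from the Poissonized indicator picture back to the exponential of the true $L_N^{i,j}$: one must truncate each $L_N^{i,j}$ at an appropriate mesoscopic scale, show that the truncated and untruncated exponential moments agree to the required precision, and verify that on the truncation event the decoupling estimates apply uniformly across pairs. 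This truncation/reconstruction step is where the analogy with maxima of log-correlated fields, alluded to in the keywords, becomes the guiding heuristic, as one is essentially controlling the contribution of near-maximal intersection configurations via sharp two-moment Chen--Stein input.
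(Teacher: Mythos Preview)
Your outline correctly identifies the two pillars of the argument—a geometric time decomposition and a Poisson approximation via the Arratia--Goldstein--Gordon ``two moments suffice'' criterion, with triple intersections furnishing the second-moment error. Where your plan diverges from the paper, and where it currently has a genuine gap, is the step you yourself flag as ``most delicate'': passing from the Poissonized collision indicators back to the exponential moment. You propose to truncate each $L_N^{i,j}$ and then invoke ``independence across pairs''. But pairs $(i,j)$ and $(i,k)$ share the walk $S^i$, so even after Poissonizing the \emph{indicators}, the \emph{post-collision local times} remain coupled, and it is not clear how a truncation of $L_N^{i,j}$ alone would produce the needed factorization $\prod_{(i,j)} \DE[e^{\beta_N^2 L^{i,j}}]$ with error $o(1/q^2)$.

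The paper resolves this by a different device that you are missing: rather than truncating, it restricts attention within each window to \emph{disjoint} intersecting pairs, recording their count $\mR_k$ and the associated stopping times $\tau_1<\tau_2<\cdots$. Because disjoint pairs involve distinct walks, the Markov property at $\tau_r$ cleanly peels off one factor $a_k$ (the conditioned two-particle exponential moment, see \eqref{def:a_k}) per intersecting pair, yielding the exact lower bound $\DE_{\mathbf x}^{o_k,\mathbf y}[\,\cdot\,]\ge \DE_{\mathbf x}^{o_k,\mathbf y}[a_k^{\mR_k}]$. The AGG theorem is then applied not to a family of pair-window indicators but to the single count $\mR_k$ (via $\tilde\mR_k$), giving $\DE[a_k^{\mR_k}]\gtrsim e^{\binom{q_0}{2}\bar\alpha(a_k-1)}$ by the Poisson Laplace transform. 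The product over windows becomes a Riemann sum $\bar\alpha\sum_k(e^{\lambda_k^2}-1)\to\lambda^2$, which replaces your heuristic $e^{\lambda^2/M}$ per window. You should also build in the endpoint conditioning (Proposition~\ref{prop:lowerBoundDN_Upsilon}) that makes the windows genuinely decouple, and the ``good set'' event $A_k$ (walks in a ball at times $L_k$), whose cost is where the hypothesis $\log\log N=O(q^2)$ is actually used.
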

  This last bound {matches to leading order}
  the upper bound $\IE[W_N^q]\leq e^{\binom q 2 \lambda^2(1+|\varepsilon_N|)}$ that we obtained in \cite{CZ21} in the regime $q^2 \leq c \log N$ with $c=c(\hat \beta)$.

  {Theorem \ref{th:main} requires $q$ to be larger than $\sqrt{\log\log N}$.
  The statement in fact continues to hold without that restriction: indeed, for $q=O(1)$ this is contained in \cite{LyZy21}, while we provide in Appendix
  \ref{appendixB} the modifications needed to extend the statement
  to the range $1\ll q^2<\log\log N$.}

In fact,  for $q$ independent of $N$, the convergence \eqref{eq-qmomentsfixed} yields an exact equivalence with errors $o(1)$ in the exponents.
 As shown  in \cite{LyZy22}, the underlying reason 
 is an asymptotic decoupling for the intersection local time of the walks. In comparison, we prove a weaker form of decoupling, for a larger number of walks.

\begin{remark}
It was pointed to us by F. Caravenna that in the continuous setup, i.e. when the random walk $S_n$ is replaced by a Brownian motion,  the sum in the definition of $W_N$ is replaced by an integral, and the environment replaced by a regularized white noise, the result of Theorem \ref{th:main} with $\varepsilon_N=0$ 
 follows from a correlation inequality, see \cite{CSZNot} for a similar argument. We do not see how to adapt this to the discrete setup.
\end{remark}

We further observe that when $q$ is too large, the behavior changes:
\begin{theorem} \label{th:qlarge} For all $\hat \beta > 0$
  there exist $c_0=c_0(\hat\beta)>0$ and $c_1=c_1(\hat \beta)>0$ such that when $q^2 \geq c_1 (\log N)^2$, we have $\IE[W_N^q] \geq e^{c_0 \binom q 2 N /\log N}$. 
\end{theorem}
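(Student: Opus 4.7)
The plan is to obtain the lower bound by a single-event restriction in the random-walk representation \eqref{eq:formula_moments}, applied to the event $A = \{S^1 = S^2 = \cdots = S^q\}$ that all $q$ independent walks trace out identical trajectories. On $A$ every pair $(i,j)\in\mathcal C_q$ contributes exactly $N$ to the pairwise intersection local time, so the exponent in \eqref{eq:formula_moments} is deterministic and equal to $\beta_N^2 \binom{q}{2} N$ there (its maximal possible value).

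The first step is to compute $\DP_0^{\otimes q}(A)$. Since all walks start at the origin, $A$ coincides with the event that the increments of $S^2,\dots,S^q$ match those of $S^1$ at every step $n\leq N$. Each such matching has probability $1/4$ independently across walks and steps, so $\DP_0^{\otimes q}(A) = (1/4)^{(q-1)N}$. Combining this with the deterministic value of the exponent on $A$,
\[
\IE[W_N^q] \;\geq\; \DP_0^{\otimes q}(A)\,\exp\!\left(\beta_N^2 \binom{q}{2} N\right) \;=\; (1/4)^{(q-1)N}\,\exp\!\left(\beta_N^2 \binom{q}{2} N\right).
\]

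The second step is a direct comparison. Using $\beta_N^2 = \hat\beta^2/R_N$ and $R_N \sim (\log N)/\pi$, taking logarithms gives
\[
\log \IE[W_N^q] \;\geq\; (q-1)N\left(\frac{\hat\beta^2\pi\, q}{2\log N}(1-o(1)) - \log 4\right).
\]
For any fixed $c_0 \in (0,\hat\beta^2\pi)$, the parenthesis exceeds $c_0 q/(2\log N)$ as soon as $q \geq K(\hat\beta, c_0)\log N$ for a suitable constant $K$, i.e.\ as soon as $q^2 \geq c_1 (\log N)^2$ with $c_1 = K^2$. This yields $\log \IE[W_N^q] \geq c_0 \binom{q}{2} N/\log N$, as required.

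There is no serious technical obstacle; the argument is a one-line restriction to the full-coincidence event. The quantitative content is the trade-off in the regime $q \asymp \log N$: the exponential gain $\beta_N^2 \binom{q}{2} N \asymp q^2 N/\log N$ starts to dominate the entropic cost $(q-1)N\log 4$ of enforcing coincidence precisely when $q$ is of order $\log N$, which is exactly where the threshold $q^2 \geq c_1(\log N)^2$ kicks in, and it forces $c_0$ to be chosen strictly below $\hat\beta^2\pi$.
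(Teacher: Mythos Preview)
Your proof is correct and takes essentially the same approach as the paper: restrict to a single event forcing many pairwise intersections and balance the entropic cost against the exponential gain. The paper uses the event $\mathcal{A}=\{S_{2k}^i=0,\ k\le \lfloor N/2\rfloor,\ i\le q\}$ (all walks at the origin at even times), yielding $\IE[W_N^q]\ge e^{\beta_N^2(N/2)\binom q2}(1/4)^{q\lfloor N/2\rfloor}$, while you use the full-coincidence event $\{S^1=\cdots=S^q\}$; both choices give the same conclusion after the identical cost--gain comparison.
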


\subsection{A high level view of the proof and structure of the paper}
{We provide in this section a somewhat impressionistic view of the proof, that neglects
important details but captures the main ideas. The starting point is 
\eqref{eq:formula_moments}, that reduces the computation of moments of the partition function to the evaluation of exponential moments of the total
pairwise intersections of $q$ independent random walk paths. Toward this end,
we introduce 
certain decoupling time $L_k$ with $L_{k+1}=L_k+o_k$ and with
$o_k$ being a large multiple ($\nu_2$) of $l_k\gg1$, see \eqref{eq:Lk}.
Very roughly, $l_k\sim (cl_{k-1})^{1+\alpha/\log N}$, and we mostly care about
$l_k>N^\epsilon$ for some $\epsilon$ small. 
Now, within each interval $I_k=[L_k,L_{k+1})$, we only count intersections of 
paths within a subinterval of length $l_k$ that is separated from both ends, and within this interval we only count the intersections of
\textit{disjoint} pairs. Using the Markov property, contributions from
different $I_k$s decouple as long as we condition on the position 
of the paths at the beginning and end of $I_k$
(the precise statement is contained
in Proposition \ref{prop:lowerBoundDN_Upsilon}). Crucially, we then reduce 
the contribution within each $I_k$ to paths whose starting points and ending
points are ``where they should be'',  and then further reduce it to a moment
of a certain quantity we call $a_k$, see \eqref{def:a_k}, which depends only on a pair of random walks, and the total number of disjoint pairs that intersect, 
denoted $\mR_k$; this is the content of the crucial Proposition \ref{prop:iteration}.}

Having obtained the decoupling, there are two tasks remaining. The first is to obtain a good control on $a_k$, that is the contribution of intersections
of a single pair of walks. This 
necessitates estimates that are related to 
those we obtained in \cite{CZ21}, with the upshot being that that $a_k\sim 
{1}/({1-\hat \beta^2 (\log l_k)/(\log N)})$, see Proposition \ref{prop:ak}.

The main innovation of the paper is then to obtain a good control of $\mR_k$, the number of disjoint pair intersections. We prove in Proposition \ref{prop:2moments} that 
$\mR_k$ is close in distribution to a Poisson random variable. The proof of Proposition \ref{prop:2moments}, which takes up most of Section \ref{sec:estimate2moments}, is based on Stein's method,
more specifically on the ``two moments suffice'' theorem of Arratia, Goldstein and Gordon \cite{AGG89}. Essentially, we  use that disjoint pairs of path are independent to introduce a notion of neighborhood of dependence between
pairs of indices. Taking parameters in the right order drives the Poisson parameter (roughly, $\alpha$)  to infinity and completes the proof of Theorem 
\ref{th:main}. 

Theorem \ref{th:qlarge} 
is much easier and obtained by forcing an event where
the walks stay confined to a neighborhood of the origin. See Section \ref{sec:qlarge} for the proof.
\subsection{Notation} 
\label{subsec-notation}
Throughout the paper, constants $C$
are positive universal constants, 
whose values may change at different occurrences.

{We use various parameters, and limits in a particular order, that we now introduce. We use the parameters
$\gamma,\varepsilon_0,\delta \in (0,1)$ and 
$\alpha,\nu_1,\nu_2,M\in \mathbb N$, 
and the following order of successive limits:
 (i) $N\to\infty$, (ii) $\alpha \to\infty$, (iii) $\nu_1\to\infty$, (iv) $\nu_2\to\infty$, (v) $\delta\to 0$, (vi) $M\to\infty$, (vii) $\varepsilon_0\to 0,\gamma\to 0$.  (The last limit can be taken simultaneously for $\varepsilon_0$ and $\gamma$). We introduce the collection of variables
 $\widehat{\Gamma}=(M,\delta,\nu_2,\nu_1,\alpha)$, 
$\widetilde{\Gamma}=(\gamma,\varepsilon_0,M,\delta,\nu_2,\nu_1,\alpha)$
and $\Gamma'=(N,\widehat{\Gamma})$, $\Gamma=(N,\widetilde{\Gamma})$.
 For any function $\Psi$, we let $\limsup_{\Gamma} \Psi(\Gamma)$
denote the limsup obtained after taking successive limsups 
in the order described above. 
We define $\limsup_{\Gamma'} \Psi(\cdot)$, 
$\limsup_{\widetilde{\Gamma}} \Psi(\cdot)$ and 
$\limsup_{\widehat{\Gamma}} \Psi(\cdot)$ similarly.}

We will use repeatedly that $(S_k^1-S_k^2) \eqlaw (S_{2k})$ when $S^1_n$ and $S^2_n$ are two independent simple random walks.

$B(x,r)$ denotes the Euclidean ball
of radius $r$ centered at $x\in \mathbb R^2$.

\section{Proofs}
\subsection{Preliminaries for the proof of Theorem \ref{th:main}}

Throughout the paper, we always assume that $N,\varepsilon_0^{-1},\delta^{-1},\nu_1,\nu_2,M,\alpha \geq 100$ and in accordance to the order of the limits, that
\begin{equation} \label{eq:assumptionOnParameters}
\begin{aligned}
 & \  (i)\ \delta^{-2} e^{-\frac{1}{2} \gamma \alpha} \leq 1,\quad (ii)\ \frac{\log (4\nu_1)}{\alpha \gamma} < 2^{-2}, \quad (iii) \ \nu_1^{-1} \delta^{-2} \nu_2 \leq 2^{-5},\\
&  (iv)\ \nu_2 e^{-\gamma \alpha/2} M^{-2} \leq 2^{-4}, \quad (v)\ N>(4\nu_2)\vee e^{2\alpha}.
\end{aligned}
\end{equation}

Next, we introduce the times $l_k,L_k$ that we use to decompose the process. With
\begin{equation}
\bar \alpha = \alpha/\log N \quad \text{and} \quad f_k = e^{k\bar \alpha},
\end{equation}
 we set: 
 \begin{equation}
   \label{eq:Lk}
   l_k= \left\lceil N^{\gamma f_k}\right\rceil, \quad \quad o_k =  \nu_1 l_{k-1} + 2l_k + \nu_2 l_k \quad  \text{and} \quad L_k=\sum_{1\leq j< k} o_j,\end{equation}
for all $k\in \llbracket 1, K\rrbracket$, where $K = \max\{k\in \mathbb Z_+, L_{k+1} \leq N\}$. (We set $l_0=L_1 = 0$).

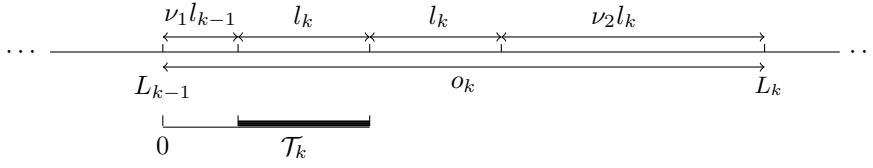
\begin{figure}[h]
\begin{center}
\begin{tikzpicture}

  \begin{scope}
    \draw [-]   (-6.5,0) -- (4.0,0) node[right] {$\cdots$ } ;
    \draw[<->] (-5,-0.2)--(3,-0.2);
    \coordinate (AAA) at (-1,-0.2);
    \draw (AAA) node[below] {$o_{k}$};
\foreach \x in {-5,-4,-2.25,-0.5,3}
\draw (\x,3pt)--(\x,0pt);
    \draw [-]   (-5,-1) -- (-2.25,-1);

    \draw [-,thick] (-4,-0.95)--(-2.25,-0.95);
    \draw [-,thick] (-4,-0.975)--(-2.25,-0.975);
    \draw [-,thick] (-4,-0.925)--(-2.25,-0.925);
\foreach \x in {-5,-4,-2.25}
\draw (\x,-28pt)--(\x,-24pt);
\coordinate (W) at (-5,-1);
\draw (W) node[below]{$0$};
\coordinate (WW) at (-3.25,-1);
\draw (WW) node[below]{$\mathcal{T}_k$};
\coordinate (A) at (-5,-0.2);
\draw (A) node[below] {$L_{k-1}$};
\coordinate (AA) at (3,-0.2);
\draw (AA) node[below] {\small{ $L_{k}$}};
\coordinate (B) at (-6.5,0);
\draw (B) node[left] {$\cdots$};

    \draw[<->] (-5,0.2)--(-4,0.2);
    \coordinate (BBB) at (-4.5,0.2);
    \draw (BBB) node[above] {$\nu_1 l_{k-1}$};
    \draw[<->] (-4,0.2)--(-2.25,0.2);
    \coordinate (BB) at (-3.125,0.2);
    \draw (BB) node[above] {$l_{k}$};
    \draw[<->] (-2.25,0.2)--(-0.5,0.2);
    \coordinate (BB) at (-1.325,0.2);
    \draw (BB) node[above] {$l_{k}$};
    \draw[<->] (-0.5,0.2)--(3,0.2);
    \coordinate (CC) at (1,0.2);
    \draw (CC) node[above] {$\nu_2 l_{k}$};

  \end{scope}

\end{tikzpicture}
\caption{Pictorial description of $k$th intervals}
\label{fig:casesshort}
\end{center}
\end{figure}

The times $L_k$ and $l_k$ satisfy the following straightforward relations:
\begin{lemma} \label{lem:estimates}
For all $k\leq K$:
\begin{equation} \label{eq:boundl_k}  (i)\ e^{\gamma \alpha/2} \leq \frac{l_{k+1}}{l_k} \leq e^{e \alpha }, \qquad (ii)\ L_{k+1} \leq 4 \nu_2 l_k.
\end{equation}
Moreover, the following bounds on $K$ hold:
\begin{equation} \label{eq:K}
  \bar \alpha^{-1}\left(\log \gamma^{-1}+\log\left(1-\frac{\log (4\nu_2)}{\log N}\right)\right)\leq K\leq \bar \alpha^{-1}\log\gamma^{-1}.
\end{equation}
\end{lemma}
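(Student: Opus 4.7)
The plan is to establish the three inequalities in the order: a direct upper bound on $K$, then (i), then (ii), and finally the lower bound in (iii). The only inputs beyond the definitions are the constraints \eqref{eq:assumptionOnParameters}, which are precisely calibrated to absorb the three small corrections that appear in the proof, namely the $+1$ from the ceiling in $l_k=\lceil N^{\gamma f_k}\rceil$, the gap between $e^{\bar\alpha}-1$ and $\bar\alpha$, and the subleading contribution $\nu_1 l_{j-1}$ to $o_j$.

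The upper bound on $K$ is immediate: if $k\bar\alpha>\log\gamma^{-1}$ then $\gamma f_k>1$, so $l_k>N$; since $L_{k+1}\geq 2l_k$, this yields $L_{k+1}>N$ and $k>K$. This establishes $K\leq\bar\alpha^{-1}\log\gamma^{-1}$, and in particular $f_k\leq\gamma^{-1}$ for every $k\leq K$, which I will reuse. For (i), the bounds $1\leq N^{\gamma f_k}\leq l_k\leq 2N^{\gamma f_k}$ give $l_{k+1}/l_k\in[\tfrac12 N^{\gamma f_k(e^{\bar\alpha}-1)},\,2 N^{\gamma f_k(e^{\bar\alpha}-1)}]$. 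The lower bound follows from $f_k\geq 1$ and $e^{\bar\alpha}-1\geq\bar\alpha$, giving $l_{k+1}/l_k\geq e^{\gamma\alpha}/2\geq e^{\gamma\alpha/2}$ by assumption (ii); the upper bound uses $f_k\leq\gamma^{-1}$ together with $e^{\bar\alpha}-1\leq 2\bar\alpha$ (valid since $\bar\alpha\leq 1/2$ by (v)), giving $l_{k+1}/l_k\leq 2e^{2\alpha}\leq e^{e\alpha}$. Iterating (i) yields $l_j\leq l_k e^{-(k-j)\gamma\alpha/2}$, and a geometric sum (convergent because (ii) forces $e^{-\gamma\alpha/2}\leq 1/2$) gives $\sum_{j\leq k} l_j\leq 2 l_k$ and $\sum_{j\leq k} l_{j-1}\leq 2 e^{-\gamma\alpha/2} l_k$. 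Substituting into the definition of $L_{k+1}$,
\[L_{k+1}\leq\bigl(2\nu_1 e^{-\gamma\alpha/2}+2(\nu_2+2)\bigr)l_k\leq 4\nu_2 l_k,\]
where the last step uses (ii) to ensure $\nu_1 e^{-\gamma\alpha/2}\leq 1$ together with $\nu_2\geq 100$.

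For the lower bound in (iii), suppose $k\bar\alpha\leq\log\gamma^{-1}+\log(1-\log(4\nu_2)/\log N)$. Rearranging gives $N^{\gamma f_k}\leq N/(4\nu_2)$, hence $l_k\leq N/(4\nu_2)+1$. A slight sharpening of (ii) already visible in the computation above (namely $L_{k+1}\leq(2\nu_2+6)l_k$, since $2\nu_1 e^{-\gamma\alpha/2}+4\leq 6$) then yields $L_{k+1}\leq N$ using $N\gg\nu_2$ from (v), so $k\leq K$. The argument is essentially bookkeeping and presents no real obstacle; the only subtlety is checking at each step that the relevant small correction is dominated by the appropriate inequality in \eqref{eq:assumptionOnParameters}.
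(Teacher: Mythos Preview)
Your argument is correct and follows essentially the same route as the paper: bound the ratio $l_{k+1}/l_k$ via $N^{\gamma f_k(e^{\bar\alpha}-1)}$ with rounding corrections, sum the resulting geometric series to get (ii), and read off the bounds on $K$. The only notable reordering is that you prove the upper bound on $K$ first (to secure $f_k\leq\gamma^{-1}$), whereas the paper proves (i) first using the equivalent fact $l_k\leq N$; this makes no real difference.

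One small quantitative point in your last paragraph: the constant $2\nu_2+6$ combined with the bare inequality $N>4\nu_2$ from (v) is marginally insufficient to conclude $(2\nu_2+6)(N/(4\nu_2)+1)\leq N$ (try $N$ just above $4\nu_2$). What actually rescues the step is that (v) also gives $N>e^{2\alpha}\geq e^{200}$, while (ii) and (iii) force $\nu_2\leq 2^{-5}\delta^{2}\nu_1<2^{-7}\delta^{2}e^{\gamma\alpha/4}\ll N$, so $N\gg\nu_2$ holds with enormous room. You might make this explicit rather than citing only (v). The paper's own proof is equally terse at this point, simply invoking (ii) and (v) without tracking the ceiling $+1$.
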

\begin{remark} \label{rem:2lk}
  It follows from \eqref{eq:boundl_k}-(i) and  \eqref{eq:assumptionOnParameters}-(ii) that $\nu_1 l_{k-1} \leq l_k$. This fact will turn out useful in several places. 
\end{remark}
\begin{proof}
We first show \eqref{eq:boundl_k}.
By rounding effects
$N^{\gamma f_k}\leq l_k \leq N^{\gamma f_k}(1+N^{-\gamma})$,
hence using that $\frac{N^{\gamma f_{k+1}}}{N^{\gamma f_k}} = N^{\gamma f_{k}(e^{\bar \alpha}-1)}$, it follows that
\begin{equation} \label{eq:ratioOnl_k}
(1+N^{-\gamma})^{-1-(e^{\bar \alpha}-1)}l_k^{(e^{\bar \alpha}-1)}\leq \frac{l_{k+1}}{l_k}\leq l_k^{(e^{\bar \alpha}-1)}(1+N^{-\gamma}).
\end{equation}
As by definition $N^{\gamma} \leq l_k \leq N$, the usual estimate $ \bar \alpha \leq e^{\bar \alpha} - 1\leq \bar \alpha e^{\bar \alpha}$ and \eqref{eq:assumptionOnParameters}-(v) yield that $e^{\gamma \alpha} \leq l_k^{(e^{\bar \alpha}-1)} \leq e^{e^{1/2}\alpha}$. We then bound $(1+N^{-\gamma})$ by 2 and obtain \eqref{eq:boundl_k}-(i) from \eqref{eq:ratioOnl_k} by using that $\alpha$ and $\gamma \alpha$ are large by \eqref{eq:assumptionOnParameters}-(i).

  
  Now, equation \eqref{eq:boundl_k}-(i) implies that for all $j\leq k$, we have
  $l_j\leq e^{-\gamma \alpha  (k-j)/2} l_k$. Therefore,
  \begin{align*}
  L_{k+1} & =(\nu_1 + 2 + \nu_2) \sum_{1\leq j<k} l_j + (2+\nu_2) l_k\\
  & \leq (\nu_1 + 2 + \nu_2) \sum_{1\leq j<k} e^{-\gamma \alpha  (k-j)/2}  l_k + (2+\nu_2) l_k\leq (\eta + 2 + \nu_2)l_k,
  \end{align*}
  with $\eta = \frac{e^{-\gamma \alpha/2}}{1-e^{-\gamma \alpha /2}}(\nu_1+ 2+\nu_2)$. We find \eqref{eq:boundl_k}-(ii) via \eqref{eq:assumptionOnParameters}-(ii).

Regarding \eqref{eq:K}, the upper bound on $K$ is obtained using that $L_k\geq l_k$. The lower bound is a consequence of \eqref{eq:boundl_k}-(ii) and \eqref{eq:assumptionOnParameters}-(v).
\end{proof}

To help us control the positions of the walks at the times $(L_k)$, we define the (random) set of indices
\[G_k = \left\{i\in\llbracket 1,q\rrbracket \, :\, S_{L_k}^i \in B\left(0,\delta^{-1} L_k^{1/2}\right) \text{ and } S_{L_{k+1}}^i \in B\left(0,\delta^{-1} L_{k+1}^{1/2}\right)\right\},\]
 where {we recall that
 $B(x,r)$ is the Euclidean 
 ball of radius $r$ centered at $x\in \mathbb R^2$,}
 and further introduce the event:
\[A_k = \left\{|G_k| \geq (1-\varepsilon_0)q \right\}.\] For all $m\in \mathbb N$ and $\mathbf x=(x_1,\dots,x_m),\mathbf y=(y_1,\dots,y_v)\in (\mathbb Z^2)^m$, write $\mathbf x\sim_{n} \mathbf y$ whenever $\DP_{\mathbf x}^{\otimes m}(S^1_n=y_1,\dots,S^m_n=y_m)> 0$. When $\mathbf x \sim_n \mathbf y$, denote by $\DE_{\mathbf x}^{n,\mathbf y}$ the expectation for $m$ copies of the simple random walk started at $\mathbf x$ and conditioned on arriving at $\mathbf y$ at time $n$, that is
\[\DE_{\mathbf x}^{n,\mathbf y}[\cdot] = \DE_{\mathbf x}^{\otimes m} [\cdot | S_n^1 = y_1,\dots,S_n^m = y_m].\] 
Further let 
$B_{m,k}= \left(B\left(0,\delta^{-1} L_{k}^{1/2}\right)\cap  \mathbb Z^2\right)^m$.

We are now ready to decompose the moment of $W_N$ as a product of contributions coming from the different time intervals $[L_k,L_{k+1}]$. This is the purpose of the next proposition.
\begin{proposition} \label{prop:lowerBoundDN_Upsilon}
  Let ${q_0}=\lfloor (1-\varepsilon_0) q \rfloor$. We have:
\begin{equation} \label{eq:first_lowerbound}
  \IE\left[ W_N^q\right]  \geq  D_N \prod_{k=1}^K \Upsilon_k,
\end{equation}
where $D_N:=\DE_0^{\otimes q}\left[\prod_{k=1}^K \mathbf{1}_{A_k} \right]$ and 
\begin{equation} \label{eq:defUpsilon}
  \Upsilon_k := \inf_{\substack{\mathbf x\in B_{{q_0},k},\mathbf y\in B_{{q_0},k+1}\\ \mathbf x \sim_{o_k} \mathbf y}} \DE_{\mathbf x}^{o_k,\mathbf y}\left[e^{\beta_N^2  \sum_{n=\nu_1 l_{k-1}}^{\nu_1 l_{k-1} + 2l_k} \sum_{(i,j)\in \mathcal C_{{q_0}}} \mathbf{1}_{S^i_n = S^j_n}}\right].
\end{equation}
\end{proposition}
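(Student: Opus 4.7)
The plan is to use \eqref{eq:formula_moments} and perform two non-negativity-preserving restrictions followed by a Markov-property factorization. First, within each interval $[L_k, L_{k+1})$, retain only the contribution of times in the sub-window $I_k' := [L_k + \nu_1 l_{k-1},\, L_k + \nu_1 l_{k-1} + 2 l_k] \subset [L_k, L_{k+1})$ of length $2 l_k$. Since the $I_k'$ are disjoint across $k$ and the intersection local time is non-negative, dropping all other terms in the exponent yields
$$\IE[W_N^q] \;\geq\; \DE_0^{\otimes q}\!\left[\prod_{k=1}^K e^{\beta_N^2 \sum_{(i,j)\in\mathcal C_q} \sum_{n\in I_k'} \mathbf{1}_{S_n^i = S_n^j}}\right].$$

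Second, multiply by $\prod_k \mathbf{1}_{A_k} \leq 1$ and, on each $A_k$, select a $\sigma(S_{L_k}, S_{L_{k+1}})$-measurable subset $H_k \subseteq G_k$ of size $q_0$ (for instance the lex-smallest such subset; set $H_k$ arbitrarily off $A_k$). Restricting the pair sum from $\mathcal C_q$ down to $\mathcal C_{H_k}$ gives a further lower bound. Now condition on $\mathcal G := \sigma(S^i_{L_j} : 1 \leq i \leq q,\ 1 \leq j \leq K+1)$. By the Markov property, conditional on $\mathcal G$ the paths $(S^i_n)_{L_k \leq n \leq L_{k+1}}$ are mutually independent across $i$ and $k$, each being a random walk bridge of length $o_k$ between the prescribed endpoints. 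Since both $\mathbf{1}_{A_k}$ and $H_k$ are $\mathcal G$-measurable, and since the $k$th factor above depends only on paths in $[L_k, L_{k+1}]$ with indices in $H_k$, the conditional expectation factorizes as a product over $k$, and within each factor the walks with indices outside $H_k$ integrate trivially.

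By exchangeability of the walks one may relabel $H_k$ as $\llbracket 1, q_0 \rrbracket$, so each of the $K$ factors matches precisely the expectation appearing inside the infimum in \eqref{eq:defUpsilon}, with bridge length $o_k$ and intersection times $\nu_1 l_{k-1} \leq n \leq \nu_1 l_{k-1} + 2 l_k$ after shifting by $-L_k$. On $A_k$, the endpoints $(S^i_{L_k})_{i \in H_k}$ and $(S^i_{L_{k+1}})_{i \in H_k}$ lie in $B_{q_0,k}$ and $B_{q_0,k+1}$ respectively, so each factor is at least the deterministic constant $\Upsilon_k$. Taking the outer expectation yields $\IE[W_N^q] \geq \prod_k \Upsilon_k \cdot \DE_0^{\otimes q}[\prod_k \mathbf{1}_{A_k}] = D_N \prod_k \Upsilon_k$, as required.

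The only delicate point is bookkeeping of measurability and exchangeability in the factorization: once one has verified that $H_k$ can be chosen as a $\mathcal G$-measurable function of the endpoints, and noted that the sub-windows $I_k'$ lie disjointly inside the $[L_k, L_{k+1})$, the rest is a routine application of the Markov property for the simple random walk.
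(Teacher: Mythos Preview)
Your proof is correct and follows essentially the same approach as the paper. The only organizational difference is that the paper proceeds by induction, peeling off one interval $[L_{K-l},L_{K+1-l}]$ at a time via the Markov property, whereas you condition on the full skeleton $\mathcal G=\sigma(S^i_{L_j})$ at once and invoke the conditional independence of the bridge segments simultaneously; both arguments rest on the same Markov-property factorization and the same two restrictions (to the sub-windows $I_k'$ and to the $q_0$ well-placed indices). One small wording remark: what you call ``exchangeability'' is really the fact that the conditional law of the $q_0$ bridges with indices in $H_k$ given $\mathcal G$ depends only on their endpoints, not on the labels, which is exactly what you use.
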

\begin{proof} 
  Let $\Psi_{L} = e^{\beta_N^2 \sum_{(i,j)\in \mathcal C_q} \sum_{n=1}^{L} \mathbf{1}_{S^i_n = S^j_n}}$.
We will prove by induction that for all $l\in \llbracket 0,K\rrbracket$,
\begin{equation*} \label{eq:inductionHyp}
\mathcal H_l:\quad \IE\left[ W_N^q\right]  \geq \DE_0^{\otimes q} \left[\Psi_{L_{K+1-l}} \prod_{k=1}^{K} \mathbf{1}_{A_k} \right] \prod_{k=K+1-l}^{K} \Upsilon_k.
\end{equation*}
The case $l=K$ will then give the proposition (recall that $L_1=0$).

First, $\mathcal H_0$ holds by \eqref{eq:formula_moments} (we use the convention that an empty product equals 1). Suppose now that $\mathcal H_l$ holds for some $l<K$. Let $\mathbf S_n = (S^1_n,\dots,S^q_n)$ and denote by $\tilde A_k$ the event $A_k$ shifted in time by $-L_{K-l}$. By Markov's property,
\begin{equation} \label{eq:FirstMarkov}
\DE_0^{\otimes q} \left[\Psi_{L_{K+1-l}} \prod_{k=1}^{K} \mathbf{1}_{A_k} \right]= \DE_0^{\otimes q} \left[\Psi_{L_{K-l}}\hspace{-2mm} \prod_{k=1}^{K-l-1} \hspace{-2mm} \mathbf{1}_{A_k} \DE^{\otimes q}_{\mathbf S_{L_{K-l}}} \left[ \prod_{k=K-l}^K \mathbf{1}_{\tilde A_k} \Psi_{o_{K-l}} \right]\right].
\end{equation}
(recall that $o_{K-l} = L_{K+1-l} - L_{K-l}$).
We apply again Markov's property to find that for all $\mathbf x = (x_1,\dots,x_q) \in (\mathbb Z^2)^q$,
\begin{align*}
&\DE^{\otimes q}_{\mathbf x} \left[ \prod_{k=K-l}^K \mathbf{1}_{\tilde A_k} \Psi_{o_{K-l}} \right]  = \DE^{\otimes q}_{\mathbf x} \left[\mathbf{1}_{\tilde A_{K-l}} \DE^{\otimes q}_{\mathbf x} \left[\Psi_{o_{K-l}}\middle| \mathbf S_{o_{K-l}} \right]  \DE^{\otimes q}_{\mathbf S_{o_{K-l}}} \left[ \prod_{k=K-l+1}^K \mathbf{1}_{\tilde A_k} \right] \right].
\end{align*}
On the event $\tilde A_{K-l}$, we let 
$(i_r)_{r\leq {q_0}}$ be {the ${q_0}$ smallest}
indices such that $S_0^{i_r} \in B(0,\delta^{-1} L_{K-l}^{1/2})$ and  $S_{o_{K-l}}^{i_r} \in B(0,\delta^{-1} L_{K-l+1}^{1/2})$ for all $r\leq {q_0}$.
 It follows that on $\tilde A_{K-l}$, one has $\DE^{\otimes q}_{\mathbf x} \left[\Psi_{o_{K-l}}\middle| \mathbf S_{o_{K-l}} \right] \geq \Upsilon_{K-l}$ by restricting the sum inside the exponential to the walks indexed by the $i_r$'s and to the time interval $\llbracket \nu_1 l_{k-1},\nu_1 l_{k-1} + 2l_k\rrbracket$. In particular, we obtain from the last display that
\[\DE^{\otimes q}_{\mathbf S_{L_{K-l}}} \left[ \prod_{k=K-l}^K \mathbf{1}_{\tilde A_k} \Psi_{o_{K-l}} \right] \geq \Upsilon_{K-l} \DE^{\otimes q}_{\mathbf S_{L_{K-l}}} \left[ \prod_{k=K-l}^K \mathbf{1}_{\tilde A_k} \right].
\]
This combined with  $\mathcal H_{l}$ and \eqref{eq:FirstMarkov} implies that $\mathcal H_{l+1}$ holds.
\end{proof}
The goal now is to obtain a good lower bound on the quantity $\Upsilon_k$ defined in \eqref{eq:defUpsilon}. For this purpose, we introduce $\mathcal T_k$ the time interval:
\begin{equation} \label{eq:DefTk}
  \mathcal T_k = \llbracket \nu_1 l_{k-1},\nu_1  l_{k-1} + l_k \rrbracket,
\end{equation}
and define $\mR_k$ as the maximal number of disjoint pairs $(i,j)\in 
\mathcal C_{{q_0}}$ such that $S^i$ and $S^j$ intersect during $\mathcal T_k$ without leaving some large ball. More precisely, let 
\begin{equation}
  \sigma^i_k = \inf \left\{n\in \mathcal T_k, |S_n^i| > M l_k^{1/2} \right\},
\end{equation}
(we set $\sigma^i_k=\infty$ when the set is empty)
and define
\[\tau_1 = \inf\left\{n\in {\mathcal T_k} : \exists 
(i,j)\in\mathcal C_{{q_0}} 
\text{ such that } S^i_n = S^j_n \text{ and } n<  \sigma^i_k \wedge \sigma^j_k\right\},\]
 as the first time two particles intersect before one of them leaves the ball of radius $M l_k^{1/2}$ and $(I_1,J_1)$ be the two particles involved. If the set is empty, we let $\tau_1 = \infty$. Then, define iteratively: 
 \begin{align*}\tau_{r+1}  = \inf\big\{& n> \tau_{r},\, n\in \mathcal T_k : \exists (i,j)\in\mathcal C_{{q_0}} \text{ such that: }S^i_n = S^j_n, \\
  & \quad n<  \sigma^i_k \wedge \sigma^j_k \text { and }  \forall s\leq r, \{i,j\}\cap \{I_s,J_s\} = \emptyset
\Big\},
\end{align*}
as the next time two new particles, distinct from all the previous particles $I_1,J_1,$ $ \dots,$ $I_r,J_r$,  meet.
We denote by $(I_{r+1},J_{r+1})$ this new pair. When there is no such time, we set $\tau_{r+1} = \infty$.  Finally, denote by
\[\mR_k = \sup \{r\geq 0 : \tau_r < \infty\},\]
the total number of successive disjoint intersections.
{Note that the $\tau_r$ depend on $k$, however we supress this dependence in the notation.}
\begin{remark} \label{rk:maximalityOfRk}
  Note that {a consequence of the definition is that
  $\mR_k$ is maximal in the sense that any sequence of 
  disjoint intersecting couples 
  $\mathcal I'= ((I_1',J_1'),\dots,(I_r',J_r'))$ satisfies $r\leq \mR_k$.} 
\end{remark}
{Introduce the expression}
\begin{equation} \label{def:a_k}
\begin{aligned}
  &a_k := \\
  &\inf_{t\in \mathcal T_k} \inf_{\substack{x\in B(0,M l_k^{1/2})\\y_1,y_2\in B_{1,k+1},y_1,y_2\sim_{(o_k-t)} x}} \DE_x^{\otimes 2}\left[e^{\beta_N^2\sum_{n=1}^{l_k} \mathbf{1}_{S_n^1=S_n^2} }\middle|S^1_{o_k-t} = y_1,S^2_{o_k-t} = y_2\right],
\end{aligned}
\end{equation}
{The quantity $a_k$ will serve below as a lower bound on the (multiplicative)
contribution
of a couple $(I_r,J_r)$ to the total expectation.
Considering that we have $\mR_k$ such contributions, we now prove
the following result.}
\begin{proposition} \label{prop:iteration}
  {With notation as above, we have that for}
  all $k\in \llbracket 1, K\rrbracket$,
\begin{equation*}
\begin{aligned}
\Upsilon_k
\geq \inf_{\substack{\mathbf x\in B_{{q_0},k},\mathbf y\in B_{{q_0},k+1}\\ \mathbf x \sim_{o_k} \mathbf y}} \DE_{\mathbf x}^{o_k,\mathbf y} \left[ a_k^{\mR_k} \right].
\end{aligned}
\end{equation*}
\end{proposition}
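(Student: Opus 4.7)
The plan is to extract from the double sum in \eqref{eq:defUpsilon} the contributions of the $\mR_k$ distinguished pairs $(I_r,J_r)$ and to bound each of them from below by $a_k$ via the strong Markov property. First, discard every term $\mathbf{1}_{S^i_n = S^j_n}$ in the exponent except those with $(i,j) = (I_r, J_r)$ and $n \in \llbracket \tau_r, \tau_r + l_k \rrbracket$. Since $\tau_r \in \mathcal T_k$, this window is contained in $\llbracket \nu_1 l_{k-1}, \nu_1 l_{k-1} + 2 l_k \rrbracket$, and non-negativity of the summand yields
$$\exp\Bigl(\beta_N^2 \sum_{n=\nu_1 l_{k-1}}^{\nu_1 l_{k-1} + 2 l_k} \sum_{(i,j) \in \mathcal C_{q_0}} \mathbf{1}_{S^i_n = S^j_n}\Bigr) \geq \prod_{r=1}^{\mR_k} \exp\Bigl(\beta_N^2 \sum_{n=\tau_r}^{\tau_r + l_k} \mathbf{1}_{S^{I_r}_n = S^{J_r}_n}\Bigr).$$

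Next, I would condition inside $\DE_{\mathbf x}^{o_k,\mathbf y}$ on the $\sigma$-algebra $\mathcal G$ generated by $\mR_k$, by $(\tau_r, I_r, J_r, S^{I_r}_{\tau_r})_{r \leq \mR_k}$, and by the full trajectories of all walks $S^i$ with $i \notin \bigcup_r \{I_r,J_r\}$. Since $\DE_{\mathbf x}^{\otimes q_0}$ is a product measure, so is the endpoint-conditioned bridge measure $\DE_{\mathbf x}^{o_k,\mathbf y}$; the strong Markov property then implies that, conditionally on $\mathcal G$, the remainders $(S^{I_r}_n, S^{J_r}_n)_{\tau_r \leq n \leq o_k}$ form two independent simple random walk bridges from $S^{I_r}_{\tau_r}$ to $y_{I_r}$, $y_{J_r}$ over time $o_k - \tau_r$, and the $\mR_k$ such pairs are jointly independent across $r$ thanks to the disjointness $\{I_r,J_r\} \cap \{I_s, J_s\} = \emptyset$ built into the construction of $\mR_k$.

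On the event $r \leq \mR_k$, the stopping rule forces $|S^{I_r}_{\tau_r}| \leq M l_k^{1/2}$ and $\tau_r \in \mathcal T_k$, while $\mathbf y \in B_{q_0,k+1}$ places both $y_{I_r}$ and $y_{J_r}$ in $B_{1,k+1}$. After translating time by $\tau_r$, the conditional expectation of $\exp(\beta_N^2 \sum_{n=\tau_r}^{\tau_r + l_k} \mathbf{1}_{S^{I_r}_n = S^{J_r}_n})$ is exactly of the form appearing inside the infimum defining $a_k$ in \eqref{def:a_k} (with $t = \tau_r$, $x = S^{I_r}_{\tau_r}$, $y_1 = y_{I_r}$, $y_2 = y_{J_r}$), and hence is bounded below by $a_k$; the trivial indicator at $n = \tau_r$ may be dropped since $e^{\beta_N^2} \geq 1$. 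Multiplying the $\mR_k$ such bounds via the conditional independence, taking the $\DE_{\mathbf x}^{o_k,\mathbf y}$-expectation, and finally the infimum over $\mathbf x, \mathbf y$ then yields $\Upsilon_k \geq \inf \DE_{\mathbf x}^{o_k, \mathbf y}[a_k^{\mR_k}]$.

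The principal obstacle is the bookkeeping of the nested conditionings: one must verify that after enlarging by $\mathcal G$ and conditioning on the endpoints $\mathbf y$, the pairs $(S^{I_r}, S^{J_r})_{\tau_r \leq n \leq o_k}$ really are mutually independent of one another and of the auxiliary walks, and that the ball constraints $x \in B(0, M l_k^{1/2})$ and $y_1, y_2 \in B_{1,k+1}$ appearing in \eqref{def:a_k} hold almost surely on the conditioning event. Both requirements are secured precisely by the design of $\mR_k$ (through the confinement condition $\tau_r < \sigma^{I_r}_k \wedge \sigma^{J_r}_k$ and the pairwise disjointness of indices) and by the restriction $\mathbf y \in B_{q_0, k+1}$ in the definition of $\Upsilon_k$.
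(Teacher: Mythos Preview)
Your proposal is correct and follows essentially the same strategy as the paper: restrict the exponent to the contributions of the $\mR_k$ disjoint pairs on the windows $[\tau_r,\tau_r+l_k]$, then use the Markov property and the product structure of the bridge law to pull out a factor $a_k$ for each pair. The only difference is in presentation. You condition once on the large $\sigma$-algebra $\mathcal G$ and assert joint conditional independence of the pairs; the paper instead partitions on $\{\mR_k=R\}$ and peels the pairs off one at a time, applying the strong Markov property at $\tau_1$, factoring out the $(I_1,J_1)$ contribution, and then recursing on the remaining $q_0-2$ walks. Your one-shot conditioning is correct, but the claim that under $\mathcal G$ the $(S^{I_r},S^{J_r})_{n\ge\tau_r}$ are independent bridges is not an immediate consequence of a single application of strong Markov: rigorously justifying it requires exactly the iterative argument the paper writes out (condition on $\mathcal F_{\tau_1}$, note that $\tau_2,\dots,\mR_k$ depend only on the $q_0-2$ remaining walks, repeat). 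So the two proofs coincide once your conditioning step is unpacked.
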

\begin{proof}
As $\tau_r\in \mathcal T_k$, we have $[\tau_r+1,\tau_r+l_k]\subset [\nu_1 l_{k-1},\nu_1 l_{k-1} + 2l_k]$ (see Figure \ref{fig:casesshort}) so that
\[\sum_{n=\nu_1 l_{k-1}}^{\nu_1 l_{k-1} + 2l_k} \sum_{(i,j)\in \mathcal C_{{q_0}}} \mathbf{1}_{S^i_n = S^j_n}\geq \sum_{r=1}^{\mR_{k}} \sum_{n=\tau_{r}+1}^{\tau_r +  l_k} \mathbf{1}_{S^{I_r}_n = S^{J_r}_n}. \]
Therefore, it holds that 
\begin{equation} \label{eq:lb_FLKconditioning}
  \DE_{\mathbf x}^{o_k,\mathbf y}\left[e^{\beta_N^2  \sum_{n=\nu_1 l_{k-1}}^{\nu_1 l_{k-1} + 2l_k} \sum_{(i,j)\in \mathcal C_{{q_0}}} \mathbf{1}_{S^i_n = S^j_n}}\right] \geq  \DE_{\mathbf x}^{o_k,\mathbf y} \left[ \prod_{r=1}^{\mR_k} f(\tau_r,\tau_r + l_k,I_r,J_r)\right],
\end{equation}
where $f(s,t,i,j) = \exp\big({\beta_N^2\sum_{n=s+1}^t \mathrm{1}_{S_n^i=S_n^j}}\big)$. 
Recall the definition of $a_k$ in \eqref{def:a_k}. Our goal is to show that for all $R\geq 0$,
\begin{equation} \label{eq:factorization_R}
  \Phi_R:= \DE_{\mathbf x}^{o_k,\mathbf y} \left[ \prod_{r=1}^{R}  f(\tau_r,\tau_r + l_k ,I_r,J_r) \mathbf{1}_{\mR_k=R} \right] \geq a_k^R \DP_{\mathbf x}^{o_k,\mathbf y}(\mR_k=R).
\end{equation} 
{(Again, $\Phi_R$ depends on $k, {\mathbf x}, {\mathbf y}$,
but we supress this from the notation.)}
The equation \eqref{eq:factorization_R} holds trivially for $R=0$. Now suppose $R\geq 1$.
 Let $\mathcal F_n$ denote the sigma-algebra generated by the walks until time $n$ and denote by $\mathcal F_{\tau_1}$ the sigma-field stopped by $\tau_1$. Observe that by independence of the random walks and Markov's property,
\begin{align*}
\Phi_R & = \DE_{\mathbf x}^{o_k,\mathbf y} \left[\mathbf{1}_{\tau_1<\infty}\DE_{\mathbf x}^{o_k,\mathbf y} \left[ \prod_{r=1}^{R}  f(\tau_r,\tau_r + l_k ,I_r,J_r) \mathbf{1}_{\mR_k=R} 
\middle| \mathcal F_{\tau_1} \right]\right]\\
& = \DE_{\mathbf x}^{o_k,\mathbf y} \Bigg[
\mathbf{1}_{\tau_1<\infty} \DE^{\otimes 2}_{S_{\tau_1}^{I_1},S_{\tau_1}^{J_1}}\left[f\left(0,l_k,1,2\right)\middle| S^1_{o_k-\tau_1} = y_{I_1},S^2_{o_k-\tau_1} = y_{J_1}\right]\times \\
&\hspace{1.7cm}   \DE^{o_k-\tau_1,(y_i)_{i\in \mathcal C_{{q_0}}\setminus \{I_1,J_1\}}}_{(S^i_{\tau_1})_{i\in \mathcal C_{{q_0}}\setminus \{I_1,J_1\}}} \left[ \prod_{r=1}^{R-1}  f(\tilde{\tau}_r,\tilde{\tau}_r + l_k,\tilde I_r,\tilde J_r)  \mathbf{1}_{\tilde \mR_k=R-1}  \right] \Bigg],
\end{align*}
where $\tilde \tau_r$, $\tilde I_r,\tilde J_r,\tilde \mR_k$ are defined as $\tau_r$, $I_r,J_r,\mR_k$ but for {$q_0-2$} particles and with $\mathcal T_k$ replaced by $\llbracket 0,\nu_1 l_{k-1}+l_k-\tau_1\rrbracket$.
As by definition $S_{\tau_1}^{I_1}=S_{\tau_1}^{J_1}\in B(0,M l_k^{1/2})$ and $\tau_1\in \mathcal T_k$, we obtain that
\begin{align*}
\Phi_R & \geq a_k \times \DE_{\mathbf x}^{o_k,\mathbf y} \Bigg[\mathbf{1}_{\tau_1<\infty}  \DE^{o_k-\tau_1,(y_i)_{i\in \mathcal C_{{q_0}}\setminus \{I_1,J_1\}}}_{(S^i_{\tau_1})_{i\in \mathcal C_{{q_0}}\setminus \{I_1,J_1\}}} \left[ \prod_{r=1}^{R-1}  f(\tilde{\tau}_r,\tilde{\tau}_r + l_k,\tilde I_r,\tilde J_r)  \mathbf{1}_{\tilde \mR_k=R-1}  \right]\Bigg]\\
& = a_k \DE_{\mathbf x}^{o_k,\mathbf y} \left[ \prod_{r=2}^{R}  f(\tau_r,\tau_r + l_k ,I_r,J_r) \mathbf{1}_{\mR_k=R} \right],
\end{align*}
where in the 
equality we have used Markov's property as above in the reverse direction. Iterating this process leads to \eqref{eq:factorization_R}. Then, putting together \eqref{eq:lb_FLKconditioning} and  \eqref{eq:factorization_R} and summing over $R$ entails Proposition \ref{prop:iteration}.
\end{proof}

Next, we define:
\begin{equation} \label{eq:def_lambdak2}
\lambda_k^2 = \log \frac{1}{1-\hat \beta^2 \frac{\log l_k}{\log N}}.
\end{equation}

\begin{proposition} \label{prop:ak} We have $\inf_{k\leq K} \{a_k - e^{\lambda_k^2}\} \geq -{\Delta_{\Gamma,{\ref{prop:ak}}}}$, where 
  $\Delta_{\Gamma,{\ref{prop:ak}}} >0$ satisfies {$\limsup_{\Gamma'}  |\Delta_{\Gamma,\ref{prop:ak}}| = 0$.}
\end{proposition}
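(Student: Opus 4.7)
The plan is to compare the conditional expectation defining $a_k$ to the unconditional second moment $\DE_0^{\otimes 2}[\exp(\beta_N^2 L_{l_k})]$, where $L_{l_k}:=\sum_{n=1}^{l_k}\mathbf{1}_{S^1_n=S^2_n}$, and then invoke the convergence of the latter to $e^{\lambda_k^2}$. Fix admissible $t \in \mathcal T_k$, $x \in B(0, M l_k^{1/2})$, $y_1, y_2 \in B_{1,k+1}$ with $y_i \sim_{T} x$, where $T := o_k - t \geq \nu_2 l_k$ by the definition of $o_k$ and $\mathcal T_k$. The Markov property at time $l_k$ applied to each independent walk yields
\[
\DE_x^{\otimes 2}\bigl[e^{\beta_N^2 L_{l_k}} \mid S^i_T = y_i\bigr] = \DE_x^{\otimes 2}\bigl[e^{\beta_N^2 L_{l_k}}\, h(S^1_{l_k}, S^2_{l_k})\bigr],\qquad h(u_1,u_2) := \prod_{i=1}^2\frac{p_{T-l_k}(y_i-u_i)}{p_T(y_i-x)}.
\]

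The first step is to lower bound $h$ by $1 - o(1)$ on a typical event. I would apply the quantitative LCLT (Appendix \ref{AppendixLCLT}) to both numerator and denominator, and expand $|y_i - u_i|^2$ around $|y_i - x|^2$. On the event $E := \{|S^i_{l_k} - x| \leq \nu_2^{1/4} l_k^{1/2},\, i=1,2\}$, using the bounds $T \geq \nu_2 l_k$, $|y_i|^2 \leq 4\delta^{-2}\nu_2 l_k$ and $|x|^2 \leq M^2 l_k$, a Gaussian computation should give $h(S^1_{l_k}, S^2_{l_k}) \geq 1 - \eta_1$ with $\eta_1$ bounded by a sum of terms of the form $C(\delta^{-2}/\nu_2 + \delta^{-1}\nu_2^{-1/4} + M\nu_2^{-3/4} + \nu_2^{-1/2}) + o_N(1)$; by the prescribed order of limits ($N\to\infty$, then $\alpha, \nu_1, \nu_2 \to \infty$, then $\delta \to 0$ and $M \to \infty$), all of these vanish, so $\limsup_{\Gamma'} \eta_1 = 0$. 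By translation invariance ($L_{l_k}$ depends only on $S^1 - S^2$),
\[
\DE_x^{\otimes 2}\bigl[e^{\beta_N^2 L_{l_k}} h\bigr] \geq (1-\eta_1)\Bigl[\DE_0^{\otimes 2}\bigl[e^{\beta_N^2 L_{l_k}}\bigr] - \DE_x^{\otimes 2}\bigl[e^{\beta_N^2 L_{l_k}}\mathbf{1}_{E^c}\bigr]\Bigr].
\]
The bad term is handled by Hölder: picking $\eta > 0$ small with $(1+\eta)\hat\beta^2 < 1$ (possible since $\hat\beta < 1$), the estimate \eqref{eq:K} gives $\log l_k/\log N \leq \gamma f_K \leq 1$, so $(1+\eta)\hat\beta^2 \log l_k/\log N \leq (1+\eta)\hat\beta^2 < 1$ uniformly in $k$; the moment $\DE_0^{\otimes 2}[\exp((1+\eta)\beta_N^2 L_{l_k})]$ is therefore $O(1)$ uniformly, and combined with the Gaussian tail $\mathbb P(E^c) \leq 2e^{-c\nu_2^{1/2}}$ this makes the bad term vanish in the $\Gamma'$-limit.

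The last step is the classical second moment asymptotic $\DE_0^{\otimes 2}[\exp(\beta_N^2 L_{l_k})] = \IE[W_{l_k}(\beta_N)^2] \to e^{\lambda_k^2}$ uniformly in $k \leq K$, which follows from the same renewal computation as \cite[Theorem 2.8]{CaraSuZy-universalityrelev} but with the intersection horizon $l_k$ substituted for $N$ while the disorder is still tuned to $N$; this substitution only amounts to replacing $\hat\beta^2$ by $\hat\beta^2 \log l_k/\log N$ in the limiting denominator. Combining the four steps yields $a_k \geq e^{\lambda_k^2}(1 - \eta_1) - \eta_2$, hence the claim with $\Delta_{\Gamma, \ref{prop:ak}}$ vanishing in the $\Gamma'$-limit. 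The main obstacle is the uniformity in $k$: for $k$ near $K$ the quantity $\hat\beta^2\log l_k/\log N$ approaches the critical value $\hat\beta^2$ and the second moment threatens to blow up, but the strict subcriticality $\hat\beta < 1$ provides exactly the slack needed to fit both the Hölder exponent $1+\eta$ and the limiting value $e^{\lambda_k^2} \leq 1/(1-\hat\beta^2)$.
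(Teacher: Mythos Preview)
Your proposal is correct and follows essentially the same approach as the paper: both apply the Markov property at time $l_k$ to factor out the transition-probability ratio $h$, control $h$ via the local CLT on a high-probability event where the endpoints lie in a ball of radius $O(l_k^{1/2})$, handle the complementary event by H\"older's inequality plus a Gaussian tail bound, and conclude with the unconditional second-moment asymptotic $\DE_0^{\otimes 2}[e^{\beta_N^2 L_{l_k}}]\to e^{\lambda_k^2}$. The only cosmetic differences are your choice of radius $\nu_2^{1/4}l_k^{1/2}$ for the good event (the paper uses $2Ml_k^{1/2}$) and the reference for the second-moment lower bound (the paper cites \cite[Proposition~3.4]{CZ21}).
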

\noindent
{(Recall that $\limsup_{\Gamma'}$ keeps $\gamma$ and $\varepsilon_0$ fixed
when taking the limsup, see Section \ref{subsec-notation}.)}
\begin{proof}
  {Throughout the proof, we write $\Delta_\Gamma$ instead of 
  $\Delta_{\Gamma,\ref{prop:ak}}$.}
Let $t\in \mathcal T_k$, $x\in B(0,M l_k^{1/2})$ and $y_1,y_2\in B_{1,k+1}$ such that $y_1,y_2\sim_{(o_k-t)} x$.
Let 
\[W(z_1,z_2) = \DE_x^{\otimes 2}\left[e^{\beta_N^2\sum_{n=1}^{l_k} \mathbf{1}_{S_n^1=S_n^2} }\mathbf{1}_{S^1_{l_k} = z_1} \mathbf{1}_{S^2_{l_k} = z_2}\right],\]
where we have supressed the dependence on $x$
and $k$ in the notation. 
By Markov's property,
 \begin{equation} \label{eq:Markovak}
\DE_x^{\otimes 2}\left[e^{\b_N^2\sum_{n=1}^{l_k} \mathbf{1}_{S_n^1=S_n^2} }\mathbf{1}_{S^1_{o_k-t} = y_1} \mathbf{1}_{S^2_{o_k-t} = y_2}\right]
= \sum_{z_1,z_2\in \mathbb Z^2} W(z_1,z_2) \prod_{i=1,2} p_{o_k-t-l_k}(y_i-z_i).
\end{equation}
We first show that when $|z_1|\wedge|z_2| \leq 2Ml_{k}^{1/2}$ and $z_i\sim_{o_k-t-l_k}y_i$,
\begin{equation} \label{eq:lowerBoundFromLLT}
\prod_{i=1,2} p_{o_k-t-l_k}(y_i-z_i) \geq e^{-\theta_\Gamma} \prod_{i=1,2} p_{o_k-t}(y_i-x),
\end{equation}
where, for some $\varepsilon_N = \varepsilon_N(\tilde \Gamma)$ that vanishes as $N\to\infty$,
\begin{equation} \label{eq:defAlpha'}
  \theta_\Gamma = |\varepsilon_N| + 2 b_1 - \log(1-6\nu_2^{-1}), \quad b_1 = 20\delta^{-1} \nu_2^{-1/2}M + 10M^2 \nu_2^{-1}.
\end{equation}  
To show \eqref{eq:lowerBoundFromLLT}, we rely on the local central limit theorem given in Appendix \ref{AppendixLCLT}. 
First observe that $o_k-t-l_k \geq \nu_2 l_k$ when $t\in \mathcal T_k$. We will use this repeatedly. Moreover, for $|z_1|{\vee}|z_2| \leq 2Ml_{k}^{1/2}$ and $t,x,y_i$ as above, we have that $|y_i-z_i|$ and $|y_i-x|$ are less than $(2\delta^{-1} \nu_2^{1/2} + 2M)l_{k}^{1/2}$ by \eqref{eq:boundl_k}-(ii). Since $l_k \geq N^{\gamma}$, we obtain that $|y_i-z_i| \leq c_N (o_k-t-l_k)$ and $|y_i-x| \leq c_N (o_k-t)$ with $c_N$ vanishing as $N\to\infty$. Hence Theorem \ref{th:LCLT} applies and we obtain that
\begin{align*}
&p_{o_k-t-l_k}(y_i-z_i) = 2 \bar{p}_{o_k-t-l_k}(y_i-z_i) e^{O(d_k)},\\
&p_{o_k-t}(y_i-x) = 2 \bar{p}_{o_k-t}(y_i-x) e^{O(d_k)},
\end{align*}
where $\bar p_{s}(z) = \frac{1}{\pi s} e^{-|z|^2/s}$ and $d_k =  \frac{1}{\nu_2 l_k} + \frac{\delta^{-4}\nu_2^2 + M^4}{\nu_2 ^3l_k}$.
Note that $d_k\leq c N^{-\gamma}$ with a constant $c$ depending on $\delta,\nu_2$ and $M$. 
Then, one finds by a simple computation that for $\mathbf x = (x,x)$,
\begin{align}
\label{eq-ratiop}
&\frac{\bar p_{o_k-t-l_k}(y_1-z_1) \bar p_{o_k-t-l_k}(y_2-z_2)}{\bar p_{o_k-t}(y_1-x) \bar p_{o_k-t}(y_2-x)} \nonumber\\
&= \frac{(o_k-t)^2}{(o_k-t-l_k)^2}e^{-(|y_1|^2+|y_2|^2)((o_k-t)^{-1}-(o_k-t-l_k)^{-1})+\frac{g(\mathbf z,\mathbf y)}{o_k-t-l_k} - \frac{g(\mathbf x,\mathbf y)}{o_k-t}}\\
&{\geq \frac{(o_k-t)^2}{(o_k-t-l_k)^2}e^{\frac{g(\mathbf z,\mathbf y)}{o_k-t-l_k} - \frac{g(\mathbf x,\mathbf y)}{o_k-t}},}\nonumber
\end{align}
where $g(\mathbf z,\mathbf y) = 2\langle y_1,z_1\rangle + 2\langle y_2,z_2\rangle - |z_1|^2 - |z_2|^2$ 
{and
we used in the last inequality that $(o_k-t-l_k)^{-1}>(o_k-t)^{-1}$.} 
Recall $b_1$ from \eqref{eq:defAlpha'}.
By the Cauchy-Schwarz inequality, the absolute value of {each of the}
 two terms in the last exponential is smaller than
\[(\nu_2 l_k)^{-1}\left(10\delta^{-1}L_{k+1}^{1/2} M l_k^{1/2} + 10M^2 l_k\right) \leq b_1.\]
Moreover, 
\[
  \left|\frac{(o_k-t)^2}{(o_k-t-l_k)^2} -1\right| = \frac{l_k(2(o_k-t)-l_k)}{(o_k-t-l_k)^2}  \leq \frac{l_k(6\nu_2 l_k)}{(\nu_2l_k)^2} \leq 6\nu_2^{-1}.
\]
Putting things together leads to \eqref{eq:lowerBoundFromLLT}.

Coming back to \eqref{eq:Markovak}, the bound \eqref{eq:lowerBoundFromLLT} entails that
\begin{equation}\label{eq:biggerthansumWz}
\DE_x^{\otimes 2}\left[e^{\beta_N^2 \sum_{n=1}^{l_k} \mathbf{1}_{S_n^1=S_n^2} }\middle|S^1_{o_k-t} = y_1,S^2_{o_k-t} = y_2\right] \geq e^{-\theta_\Gamma}\hspace{-5mm} \sum_{\substack{z_1,z_2\in \mathbb Z^2\\|z_1|\wedge|z_2|\leq M l_{k}^{1/2}}}W(z_1,z_2).
\end{equation}
We have
\begin{equation} \label{eq:sumWz}
  \sum_{\substack{z_1,z_2\in \mathbb Z^2\\|z_1|\wedge|z_2|\leq M l_{k}^{1/2}}}W(z_1,z_2) {\geq }
  \DE_{x}^{\otimes 2} \left[e^{\beta_N^2\sum_{n=1}^{l_k}\mathbf{1}_{S_n^1=S_n^2}}\right] - 2\sum_{\substack{z_1,z_2\in \mathbb Z^2\\
  |z_1| > M l_{k}^{1/2}}}W(z_1,z_2),
\end{equation}
where
\[\sum_{\substack{z_1,z_2\in \mathbb Z^2\\|z_1| >2Ml_{k}^{1/2}}} W(z_1,z_2) =  \DE_x^{\otimes 2}\left[e^{\beta_N^2\sum_{n=1}^{l_k} \mathbf{1}_{S_n^1=S_n^2} }\mathbf{1}_{\left|S^1_{l_k}\right| >2Ml_{k}^{1/2}} \right].\]
Recall the definition of $\lambda_k^2$ in \eqref{eq:def_lambdak2}. Given that $l_k\geq N^{\gamma}$, one can see from the proof of Proposition 3.4 in \cite{CZ21} that there exists $\varepsilon_N' =\varepsilon_N'(\gamma)\to 0$ as $N\to\infty$ such that
\[ \DE_{x}^{\otimes 2} \left[e^{\beta_N^2\sum_{n=1}^{l_k}\mathbf{1}_{S_n^1=S_n^2}}\right] =\DE_{0} \left[e^{\beta_N^2\sum_{n=1}^{l_k}\mathbf{1}_{S_{2n} = 0}}\right] \geq (1+\varepsilon_N') e^{\lambda_k^2}.\]
Moreover, by H\"older's inequality with $p^{-1}+(p')^{-1}=1$ and {$p>1$}
small enough so that $\sqrt p \hat{\beta} <1$,
\begin{align*}
\DE_{x}^{\otimes 2}\left[ e^{ \b_N^2\sum_{i=1}^{l_k} \mathbf 1_{S_n^1=S_n^2}} \mathbf{1}_{|S^1_{l_k}| >2Ml_{k}^{1/2}}\right] &\leq \DE_{0}\left[ e^{ p\b_N^2\sum_{i=1}^{l_k} \mathbf 1_{S_{2i} = 0}}\right]^{\frac{1}{p}} \DP_x\left(|S_{l_k}|>2Ml_{k}^{1/2}\right)^{\frac 1 {p'}}\\
&\leq C(\hat \beta) e^{-\frac{c}{p'} M^2},
\end{align*}
for some $c>0$, since $\DE_{0} e^{ \b_N^2\sum_{i=1}^{N} \mathbf 1_{S_{2i} = 0}} = \IE W_N^2 \leq C(\hat \beta)<\infty$ for all $\hat \beta <1$,
{see \eqref{eq-qmomentsfixed}.}
(We have also relied on Hoeffding's inequality to bound the probability, using that $|x|\leq M l_{k}^{1/2}$.)

Combining {\eqref{eq:defAlpha'},
\eqref{eq:biggerthansumWz} and \eqref{eq:sumWz} with}
the two last displays, we obtain that
\begin{align*}
& \DE_x^{\otimes 2}\left[e^{\beta_N^2\sum_{n=1}^{l_k} \mathbf{1}_{S_n^1=S_n^2} }\middle|S^1_{o_k-t} = y_1,S^2_{o_k-t} = y_2\right] \\
& \geq e^{-\theta_\Gamma}\left((1+\varepsilon_N')e^{\lambda^2_k} - 2C(\hat \beta)e^{-\frac{c}{p'} M^2}\right)\\
& = e^{\lambda_k^2} - (1-e^{-\theta_\Gamma})e^{\lambda_k^2} + e^{-\theta_\Gamma}\left(\varepsilon'_N  e^{\lambda_k^2} - 2C(\hat \beta)e^{-\frac{c}{p'}M^2}\right).
\end{align*}
To conclude the proof of the lemma, observe that for all $k\leq K$ we have $\lambda_k^2 \leq \lambda^2$, so that we can choose
\[ \Delta_\Gamma =(1-e^{-\theta_\Gamma}) e^{\lambda^2} + e^{-\theta_\Gamma}\left(|\varepsilon'_N|  e^{\lambda^2} + 2C(\hat \beta)e^{-\frac{c}{p'}M^2}\right),\]
and observe {(using \eqref{eq:defAlpha'})} 
that it satisfies 
${\limsup_{\Gamma'} 
\Delta_\Gamma = 0}$.
\end{proof}

{For technical reasons, we will also need a uniform upper bound on $a_k$.}
\begin{lemma}
  \label{lem-akUB}
  We have 
  \begin{equation}
    \label{eq-akUB}
    {  \sup_{\Gamma}\sup_{k\leq K} a_k\in[1,\infty).}
  \end{equation}
\end{lemma}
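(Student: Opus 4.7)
The plan is to exploit the fact that $a_k$ is defined as an infimum. The lower bound $a_k\geq 1$ is immediate, since $\beta_N^2\sum_{n=1}^{l_k}\mathbf 1_{S_n^1=S_n^2}\geq 0$ forces the conditional expectation in \eqref{def:a_k} to be at least $1$. The substantive content of \eqref{eq-akUB} is the uniform upper bound, and this reduces to exhibiting a single admissible configuration $(t,x,y_1,y_2)$ whose conditional expectation is bounded by a constant depending only on $\hat\beta$.

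The natural choice will be $x=0$, $y_1=y_2=0$, and $t=t_k\in\{\nu_1 l_{k-1},\nu_1 l_{k-1}+1\}\subset \mathcal T_k$, where the value is selected so that $o_k-t_k$ is even (which guarantees $0\sim_{o_k-t_k} 0$). The admissibility conditions $0\in B(0,Ml_k^{1/2})\cap B_{1,k+1}$ are trivial. Applying Markov's property at time $l_k$ then rewrites the conditional expectation as
\[
\DE_0^{\otimes 2}\!\left[\exp\!\Big(\beta_N^2\sum_{n=1}^{l_k}\mathbf 1_{S_n^1=S_n^2}\Big)\, \prod_{i=1,2}\frac{p_{o_k-t_k-l_k}(S^i_{l_k})}{p_{o_k-t_k}(0)}\right].
\]

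The key step is then to bound the Radon--Nikodym factor by a universal constant. Combining the standard uniform upper bound $p_n(z)\leq C/n$ for simple random walk in $\mathbb Z^2$ (valid for all $z$ and all $n\geq 1$) with the local CLT lower bound $p_n(0)\geq c/n$ for $n$ even and sufficiently large, and noting that $o_k-t_k\geq l_k+\nu_2 l_k$ gives
\[
\frac{o_k-t_k}{o_k-t_k-l_k}\leq 1+\frac{1}{\nu_2+1}\leq 2,
\]
one obtains that the bracketed product is at most $16 C^2/c^2$, independent of all parameters in $\Gamma$. The remaining exponential moment is handled by the estimate already invoked in the proof of Proposition \ref{prop:ak}: $\DE_0^{\otimes 2}[e^{\beta_N^2\sum_{n=1}^{l_k}\mathbf 1_{S_n^1=S_n^2}}]=\DE_0[e^{\beta_N^2\sum_{n=1}^{l_k}\mathbf 1_{S_{2n}=0}}]\leq (1+\varepsilon_N')e^{\lambda_k^2}\leq 2/(1-\hat\beta^2)$ for $N$ large. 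Multiplying these bounds produces an upper bound on $a_k$ depending only on $\hat\beta$, uniformly in $k\leq K$ and in $\Gamma$.

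The only technical point worth watching is the parity compatibility needed to make $0\sim_{o_k-t_k} 0$; this is resolved by the freedom to shift $t_k$ by one within $\mathcal T_k$. I do not see any genuine obstacle to this argument.
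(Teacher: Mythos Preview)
Your argument is correct and takes a genuinely different route from the paper. The key observation you exploit---that $a_k$ is an \emph{infimum} over admissible $(t,x,y_1,y_2)$, so exhibiting a single bounded configuration suffices---is sharper than what the paper does. The paper instead bounds the conditional expectation uniformly over \emph{all} admissible configurations via LCLT ratio estimates (as in the proof of Proposition~\ref{prop:ak}), splitting according to the range of $|z_i|$; this yields a bound with prefactor $2e^{\theta_\Gamma + 4/(\delta^2\nu_2)}$, which depends on the parameters in $\Gamma$. Your bound, by contrast, is truly universal: the ratio $p_{o_k-t_k-l_k}(z)/p_{o_k-t_k}(0)$ is controlled by Corollary~\ref{cor:p_nbound} for the numerator and the LCLT at the origin for the denominator, and the factor $(o_k-t_k)/(o_k-t_k-l_k)\leq 2$ uses only $\nu_2\geq 1$. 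So your approach is both shorter and yields the statement of the lemma more directly.

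One small slip: the inequality $\DE_0[e^{\beta_N^2\sum_{n=1}^{l_k}\mathbf 1_{S_{2n}=0}}]\leq (1+\varepsilon_N')e^{\lambda_k^2}$ you cite from the proof of Proposition~\ref{prop:ak} is stated there as a \emph{lower} bound. What you actually need (and what is also used in that proof) is the cruder estimate
\[
\DE_0\Big[e^{\beta_N^2\sum_{n=1}^{l_k}\mathbf 1_{S_{2n}=0}}\Big]\leq \DE_0\Big[e^{\beta_N^2\sum_{n=1}^{N}\mathbf 1_{S_{2n}=0}}\Big]=\IE[W_N^2]\leq C(\hat\beta),
\]
which gives your final bound $2/(1-\hat\beta^2)$ (up to a constant) without any appeal to $\varepsilon_N'$. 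With this correction the proof is complete.
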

\begin{proof} {Since $a_k\geq 1$, the lower bound is trivial. To see the upper bound, we proceed as in the proof of Proposition \ref{prop:ak} and write
as in \eqref{eq:Markovak}:
$$\DE_x^{\otimes 2}\left[e^{\b_N^2\sum_{n=1}^{l_k} \mathbf{1}_{S_n^1=S_n^2} }\mathbf{1}_{S^1_{o_k-t} = y_1} \mathbf{1}_{S^2_{o_k-t} = y_2}\right]
= \sum_{z_1,z_2\in \mathbb Z^2} W(z_1,z_2) \prod_{i=1,2} p_{o_k-t-l_k}(y_i-z_i).$$
We also use the expression in 
\eqref{eq-ratiop} and estimate for $|z_1|\vee |z_2|\leq 2M l_k^{1/2}$ and $x,y_i$ in the ranges appearing in the definition of $a_k$,
\begin{align}
\label{eq-akub1}
&\frac{\bar p_{o_k-t-l_k}(y_1-z_1) \bar p_{o_k-t-l_k}(y_2-z_2)}{\bar p_{o_k-t}(y_1-x) \bar p_{o_k-t}(y_2-x)} \nonumber\\
&= \frac{(o_k-t)^2}{(o_k-t-l_k)^2}e^{-(|y_1|^2+|y_2|^2)((o_k-t)^{-1}-(o_k-t-l_k)^{-1})+\frac{g(\mathbf z,\mathbf y)}{o_k-t-l_k} - \frac{g(\mathbf x,\mathbf y)}{o_k-t}}\\
&{\leq e^{\theta_\Gamma+4/(\delta^2 \nu_2)}}\nonumber.
\end{align}
The estimate of \eqref{eq-akub1} actually extends to the range $\bar z:=|z_1|\vee  |z_2|\leq l_k^{3/5}$ in the form
\begin{align}
\label{eq-akub2}
&\frac{\bar p_{o_k-t-l_k}(y_1-z_1) \bar p_{o_k-t-l_k}(y_2-z_2)}{\bar p_{o_k-t}(y_1-x) \bar p_{o_k-t}(y_2-x)} \nonumber\\
&\leq 2 \frac{(o_k-t)^2}{(o_k-t-l_k)^2}e^{-(|y_1|^2+|y_2|^2)((o_k-t)^{-1}-(o_k-t-l_k)^{-1})+\frac{g(\mathbf z,\mathbf y)}{o_k-t-l_k} - \frac{g(\mathbf x,\mathbf y)}{o_k-t}}\\
&{\leq 2e^{\theta_\Gamma+4/\delta^2 \nu_2} e^{- c \bar z^2/(\nu_2 l_k)}},\nonumber
\end{align}
with $c$ a universal constant; for $\bar z> l_k^{3/5}$, we use a simple large deviations estimate and obtain that
\begin{equation}
\label{eq-akub4}
\frac{\bar p_{o_k-t-l_k}(y_1-z_1) \bar p_{o_k-t-l_k}(y_2-z_2)}{\bar p_{o_k-t}(y_1-x) \bar p_{o_k-t}(y_2-x)} \nonumber\\
\leq e^{-c l_k ^{1/10}}.
\end{equation}
We thus obtain, in analogy with
\eqref{eq:biggerthansumWz},
\begin{align}
\label{eq-akub3}
\DE_x^{\otimes 2}\left[e^{\beta_N^2 \sum_{n=1}^{l_k} \mathbf{1}_{S_n^1=S_n^2} }\middle|S^1_{o_k-t} = y_1,S^2_{o_k-t} = y_2\right] \leq 2e^{\theta_\Gamma +4 /(\delta^2\nu_2)} \sum_{z_1,z_2\in \mathbb Z^2}
W(z_1,z_2)
\end{align}
which, using \cite[Proposition 3.4]{CZ21}, is bounded above by a universal constant depending only on $\hat\beta$.}
\end{proof}
{Recall that $q_0=\lfloor (1-\varepsilon_0)q\rfloor$, see 
\eqref{prop:lowerBoundDN_Upsilon}.}
Our next goal is to show that $\mR_k$ is close to a Poisson 
random variable of parameter $\alpha \binom {{q_0}} 2 / \log N$ by relying on the "two moments suffice" theorem \cite{AGG89}. To verify {the hypothesis of the latter,}
it is more convenient to work with the quantity
\[\tilde \mR_k = \sum_{(i,j)\in \mathcal C_{{q_0}}} \mathbf{1}_{\tau^{(i,j)}_k < \infty}, \quad \tau_k^{(i,j)} = \inf\{n\in \mathcal T_k : S_n^i=S_n^j, n< \sigma^i_k \wedge \sigma^j_k\},\]
(we set $\tau_k^{(i,j)}=\infty$ when the set of the infimum above is empty) which counts the number of \emph{all} the couples that intersect in the time interval $\mathcal T_k$ (whereas $\mR_k$ counts the maximal number of \emph{independent} couples).  
The next proposition states that the law of $\tilde \mR_k$  can be approximated by a Poisson law of mean $\bar \alpha \binom {{q_0}} 2$ and that $\mR_k$ and $\tilde \mR_k$ are close in distribution.
Before stating the proposition, we introduce a few quantities. 
For all $(i,j)\in \mathcal C_{{q_0}}$, we let
$p_{(i,j)} = \DP_{\mathbf x}^{o_k,\mathbf y} (\tau_k^{(i,j)}<\infty)$ and define:
\begin{equation} \label{def:mu}
  \mu = \sum_{(i,j)\in \mathcal C_{{q_0}}} p_{(i,j)}.
\end{equation}
We also set $p_{(i,j),(i',j')} = \DP_{\mathbf x}^{o_k,\mathbf y} (\tau_k^{(i,j)}<\infty, \tau_k^{(i',j')}<\infty)$.
Note that all these quantities depend on $k,\mathbf x,\mathbf y$, but we will show in Section \ref{sec:estimate2moments} that this dependence can be neglected asymptotically. In fact, we prove that $p_{(i,j)}$ can be approximated by $\bar \alpha$ and that $\mu$ can be approximated by $\bar \alpha \binom {{q_0}} 2$.

\begin{proposition} \label{prop:2moments} There exists $\Delta_{\Gamma,{\ref{prop:2moments}}} >0$ such that
  {$\limsup_{\Gamma'}\Delta_{\Gamma,\ref{prop:2moments}}= 0$,} 
  for which $\varepsilon_N^\star = q^3\bar \alpha(1+\Delta_{\Gamma,{\ref{prop:2moments}}})(\bar \alpha + \frac{\log \log N}{\gamma \log N})$ satisfies:
\begin{equation} \label{eq:2momentsSufficeTV}
  \sup_{k\leq K} \sup_{\substack{\mathbf x\in B_{{q_0},k},\mathbf y\in 
  B_{{q_0},k+1}\\ \mathbf x \sim_{o_k} \mathbf y}} \mathrm{d_{TV}} \left|\DP_{\mathbf x}^{o_k,\mathbf y}\left(\tilde \mR_k = \cdot\right) - \mathcal{P}(\mu) \right| \leq C\varepsilon_N^\star,
\end{equation}
and 
\begin{equation} \label{eq:R_kandtildeR_k}
  \sup_{k\leq K} \sup_{\substack{\mathbf x\in B_{{q_0},k},\mathbf y\in 
  B_{{q_0},k+1}\\ \mathbf x \sim_{o_k} \mathbf y}} \mathrm{d_{TV}} \left|\DP_{\mathbf x}^{o_k,\mathbf y}(\mR_k = \cdot ) - \DP_{\mathbf x}^{o_k,\mathbf y}\left(\tilde \mR_k = \cdot\right)\right|\leq  C\varepsilon_N^\star,
  \end{equation}
  where $\mathrm{d_{TV}}|\cdot|$ denotes the distance in total variation and $\mathcal P(\mu)$ is the Poisson distribution of mean $\mu$ {from \eqref{def:mu}.}
\end{proposition}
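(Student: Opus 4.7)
The plan is to apply the ``two moments suffice'' theorem of Arratia, Goldstein and Gordon \cite{AGG89} to the Bernoulli variables $X_{(i,j)}=\mathbf{1}_{\tau_k^{(i,j)}<\infty}$ indexed by $\mathcal{C}_{q_0}$, with neighborhood of dependence
\[
\mathcal{N}_{(i,j)}=\{(i',j')\in\mathcal{C}_{q_0}:\{i',j'\}\cap\{i,j\}\neq\emptyset\}.
\]
The key structural observation is that under $\DP_{\mathbf{x}}^{o_k,\mathbf{y}}$ the $q_0$ walks are independent conditioned bridges, hence for disjoint pairs $(i,j)$ and $(i',j')$ the events $\{\tau_k^{(i,j)}<\infty\}$ and $\{\tau_k^{(i',j')}<\infty\}$ are exactly independent. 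Thus the AGG inequality reads
\[
\mathrm{d_{TV}}\bigl(\mathcal{L}(\tilde{\mathcal{R}}_k),\mathcal{P}(\mu)\bigr)\leq \min(1,\mu^{-1})(b_1+b_2),
\]
where $b_1=\sum_{(i,j)}\sum_{(i',j')\in\mathcal{N}_{(i,j)}}p_{(i,j)}p_{(i',j')}$ and $b_2=\sum_{(i,j)}\sum_{(i',j')\in\mathcal{N}_{(i,j)}\setminus\{(i,j)\}}p_{(i,j),(i',j')}$.

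The first step is to show the single-pair estimate $p_{(i,j)}=\bar{\alpha}(1+o(1))$, uniformly in $k\leq K$ and in endpoints $\mathbf{x}\in B_{q_0,k},\mathbf{y}\in B_{q_0,k+1}$. Using that $S^i-S^j$ has the law (after time-doubling) of a simple walk, conditioning on the endpoints via the LCLT (Theorem \ref{th:LCLT}), and the classical renewal identity $\sum_{n\leq l_k}p_{2n}(0)=(1+o(1))\log l_k/(2\pi)$, the expected number of intersections over $\mathcal{T}_k$ conditioned on the bridge is $(\log l_k-\log l_{k-1})/(\pi\log N)+o(\bar\alpha)=\bar\alpha+o(\bar\alpha)$; dividing by the potential $\sim 1$ of the first-intersection renewal gives $p_{(i,j)}\sim\bar\alpha$. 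The exit-from-the-ball condition contributes only $O(\bar\alpha/M^{2})$ by Hoeffding, which is absorbed in $\Delta_{\Gamma,\ref{prop:2moments}}$. This in turn yields $\mu=\binom{q_0}{2}\bar\alpha(1+o(1))$ and $b_1\leq 2q\binom{q_0}{2}\bar\alpha^2(1+o(1))\leq Cq^3\bar\alpha^2$.

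The main technical step is bounding $b_2$, for which we must estimate $p_{(i,j),(i,k)}$ for triples of distinct indices $i,j,k$. Decomposing on the position of the first meeting of $(i,j)$ (say at a site $z\in B(0,Ml_k^{1/2})$ at some time $s\in\mathcal{T}_k$) and then using the Markov property together with the bridge-LCLT, the conditional probability that walks $i$ and $k$ meet during $\mathcal{T}_k$ is bounded by $\bar\alpha$ times a factor involving an extra logarithmic overshoot coming from the additional renewal factor $\log(l_k/(l_k-s))$; summing this against the $(i,j)$-first-meeting density yields $p_{(i,j),(i,k)}\leq \bar\alpha^2(1+\Delta_{\Gamma,\ref{prop:2moments}})+C\bar\alpha\cdot\log\log N/(\gamma\log N)$. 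Summing over $O(q^3)$ such ordered triples provides $b_2\leq Cq^3\bar\alpha(\bar\alpha+\log\log N/(\gamma\log N))$, and combining with $b_1$ and with $\min(1,\mu^{-1})\leq 1$ gives \eqref{eq:2momentsSufficeTV}. The hardest part is controlling the overshoot in $p_{(i,j),(i,k)}$: the conditioning on both pairs intersecting clusters three walks near a single space-time region, and showing that this clustering is only responsible for the $\log\log N/\log N$ correction (rather than, say, a full extra $\bar\alpha^{-1}$) requires combining the LCLT bounds used in Proposition \ref{prop:ak} with a dyadic decomposition of the second meeting time.

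For \eqref{eq:R_kandtildeR_k}, since $\mathcal{R}_k\leq\tilde{\mathcal{R}}_k$ always, and $\mathcal{R}_k=\tilde{\mathcal{R}}_k$ whenever the intersection graph on $\{1,\dots,q_0\}$ is already a matching, we have
\[
\mathrm{d_{TV}}\bigl(\mathcal{L}(\mathcal{R}_k),\mathcal{L}(\tilde{\mathcal{R}}_k)\bigr)\leq \DP_{\mathbf{x}}^{o_k,\mathbf{y}}(\mathcal{R}_k\neq\tilde{\mathcal{R}}_k)\leq \sum_{\substack{i,j,k\text{ distinct}}}p_{(i,j),(i,k)},
\]
and the $b_2$ bound above immediately yields the same $C\varepsilon_N^\star$. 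The uniformity in $\mathbf{x},\mathbf{y}$ and $k$ is built into the LCLT estimates, which go through as long as $l_k\geq N^\gamma$, which is guaranteed by \eqref{eq:assumptionOnParameters}.
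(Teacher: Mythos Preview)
Your high-level structure coincides with the paper's: apply Arratia--Goldstein--Gordon with exactly the dependence neighborhood $\mathcal N_{(i,j)}$ you define (disjoint pairs are independent under $\DP_{\mathbf x}^{o_k,\mathbf y}$, so $b_3=0$), bound $b_1$ via the single-pair estimate $p_{(i,j)}\sim\bar\alpha$ and $b_2$ via a three-particle estimate, and handle \eqref{eq:R_kandtildeR_k} by observing that $\{\mathcal R_k\neq\tilde{\mathcal R}_k\}$ forces two overlapping intersecting pairs, hence is dominated by $b_2$. The paper packages the two inputs as separate results (Proposition~\ref{prop:pxy} and Proposition~\ref{prop:3intersections}), so the proof of Proposition~\ref{prop:2moments} proper is only a few lines.

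Where your sketch diverges is in \emph{how} those two inputs are obtained, and the details you give are not quite right. For $p_{(i,j)}\sim\bar\alpha$ you propose a renewal computation, but the normalizations are off: the expected number of intersections over $\mathcal T_k$ is of order $(\log l_k-\log l_{k-1})/\pi\sim\alpha/\pi$, not $\bar\alpha$, and the ``potential'' you divide by is $\sim(\log l_k)/\pi$, not $\sim1$; it is the ratio that yields $\bar\alpha$. Making this precise uniformly in the starting points $\mathbf x\in B_{2,k}$ (which are $O(L_k^{1/2})$ apart, not at the same point) and under the bridge conditioning is not routine. The paper instead removes the bridge conditioning and the exit-from-ball constraint via LCLT (Lemmas~\ref{lem:removeConditioning}, \ref{lem:remove_barrier}), then uses a KMT coupling to reduce to a Brownian hitting-of-a-small-ball probability, which is computed from Spitzer's formula (Lemmas~\ref{lem:KMT}, \ref{lem:G_k}). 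For the triple estimate the paper does not use a dyadic decomposition of the second meeting time: after the first $(i,j)$-meeting it bounds the $(i,k)$-meeting by Le~Gall's hitting estimate $h_k(z)\leq C(\log l_k)^{-1}\big((\log(l_k/|z|^2))_+ +\mathbf 1_{|z|^2\geq l_k}\big)$ and averages over the position of $S^k$ (Lemma~\ref{lem:h_k}); this gives the $\log\log N/(\gamma\log N)$ factor directly, without the overshoot analysis you describe.
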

\begin{remark} \label{rk:epsilonStar}
Since $q^2=O(\log N)$ we have $\limsup_N \varepsilon^\star_N = 0$.
\end{remark}

\begin{proof}
  We first prove \eqref{eq:2momentsSufficeTV}.
  Following \cite{AGG89},  we define $B_{(i,j)} = \{(i',j')\in\mathcal C_{{q_0}}: \{i',j'\}\cap \{i,j\}\neq \emptyset\}$ and
\begin{align*}
  &e_1 = \sum_{(i,j)\in\mathcal C_{{q_0}}} \sum_{(k,l)\in B_{(i,j)}} p_{(i,j)} p_{(k,l)},\\
  &e_2 = \sum_{(i,j)\in\mathcal C_{{q_0}}} \sum_{(i',j')\in B_{(i,j)}\setminus\{(i,j)\}} p_{(i,j),(i',j')}.
\end{align*}
By Proposition \ref{prop:pxy} and Proposition \ref{prop:3intersections}, we have
$e_1 \leq Cq^3  (1+\Delta_{{\Gamma},\ref{prop:pxy}})^2 \bar \alpha^2 $ and
$e_2 \leq C (1+\Delta_{\Gamma,\ref{prop:3intersections}}) q^3 \bar \alpha \frac{\log \log N}{\gamma \log N} $ with $\limsup_{{\Gamma'}} \Delta_{\Gamma} = 0$ for both errors. 
We then obtain \eqref{eq:2momentsSufficeTV} by applying \cite[Theorem 1]{AGG89}. 

We turn to \eqref{eq:R_kandtildeR_k}.
By a standard property of the distance in total variation,
\[\mathrm{d_{TV}} \left| \DP_{\mathbf x}^{o_k,\mathbf y}(\mR_k = \cdot ) - \DP_{\mathbf x}^{o_k,\mathbf y}(\tilde \mR_k = \cdot )\right| \leq 2 \DP_{\mathbf x}^{o_k,\mathbf y}(\mR_k\neq \tilde \mR_k).\]
Then, observe that on the event $\{\mR_k\neq \tilde \mR_k\}$, there 
 exist two couples $(i,j),(i',j')\in  \mathcal C_{{q_0}}$ such that $|\{i,j\}\cap\{i',j'\}| = 1$ with $\tau_k(i,j) < \infty$ and $\tau_k(i',j') < \infty$. (See also Remark \ref{rk:maximalityOfRk}). Hence $\DP_{\mathbf x}^{o_k,\mathbf y}(\mR_k\neq \tilde \mR_k) \leq e_2$, which gives \eqref{eq:R_kandtildeR_k}.
\end{proof}

In the following proposition, we use a certain constant $\Delta_{\Gamma,\ref{prop:pxy}}>0$ introduced below in Proposition \ref{prop:pxy}, and which satisfies $\limsup_{{\Gamma'}} \Delta_{\Gamma,\ref{prop:pxy}} =  0$.
\begin{proposition} \label{prop:akRk} There exist $c>0$, $\alpha_0>0$ and $N_0=N_0(\tilde{\Gamma})$ such that for all $\alpha>\alpha_0$ and $N\geq N_0$, we have for all $k\leq K$,
\begin{equation} \label{eq:1minusDelta}
  \inf_{\substack{\mathbf x\in B_{{q_0},k},\mathbf y\in B_{{q_0},k+1}\\ \mathbf x \sim_{o_k} \mathbf y}} \DE_{\mathbf x}^{o_k,\mathbf y}\left[ a_k^{\mR_k}\right] \geq e^{\binom{{q_0}}{2}\bar \alpha (a_k-1)(1-\Delta_{\Gamma,\ref{prop:pxy}}) }\left(1- \Delta'_{\Gamma,{\ref{prop:akRk}}} \right),
\end{equation}
where 
$\Delta'_{\Gamma,\ref{prop:akRk}} \in [0,\frac{1}{2}]$ satisfies 
$\limsup_{{\Gamma'}} \binom{q}{2}^{-1} K \Delta'_{\Gamma,\ref{prop:akRk}} = 0$.
\end{proposition}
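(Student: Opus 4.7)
The strategy is to combine the Poisson approximation of $\mR_k$ from Proposition \ref{prop:2moments} with the classical identity $\mathbb{E}[a_k^Z]=e^{\mu(a_k-1)}$ for $Z\sim\mathcal P(\mu)$, and then use Proposition \ref{prop:pxy} to lower bound $\mu$ by $\binom{q_0}{2}\bar\alpha(1-\Delta_{\Gamma,\ref{prop:pxy}})$. Since $a_k\geq 1$ by Lemma \ref{lem-akUB}, a lower bound on $\mu$ transfers directly to a lower bound on $\mu(a_k-1)$, giving the claimed exponent $\binom{q_0}{2}\bar\alpha(a_k-1)(1-\Delta_{\Gamma,\ref{prop:pxy}})$. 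The main technical nuisance is that $x\mapsto a_k^x$ is unbounded, so Proposition \ref{prop:2moments} cannot be applied directly; this forces a truncation argument.

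Fix an integer $M_0$ to be chosen. Since $a_k\geq 1$, we have $a_k^{\mR_k}\geq a_k^{\mR_k\wedge M_0}$, and applying \eqref{eq:R_kandtildeR_k} then \eqref{eq:2momentsSufficeTV} to the bounded test function $x\mapsto a_k^{x\wedge M_0}$ (of sup-norm $a_k^{M_0}$) yields
\[
\DE_{\mathbf x}^{o_k,\mathbf y}[a_k^{\mR_k}]\;\geq\;\mathbb{E}[a_k^{Z\wedge M_0}]\;-\;4C\,a_k^{M_0}\,\varepsilon_N^\star.
\]
For the Poisson piece I would use the tilting identity
$\mathbb{E}[a_k^Z\mathbf{1}_{Z>M_0}]=e^{\mu(a_k-1)}\,\mathbb{P}(Z'>M_0)$ with $Z'\sim\mathcal P(a_k\mu)$, combined with the Chernoff bound $\mathbb{P}(Z'>M_0)\leq 2^{-M_0}e^{a_k\mu}$. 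Thus
\[
\DE_{\mathbf x}^{o_k,\mathbf y}[a_k^{\mR_k}]\;\geq\; e^{\mu(a_k-1)}\bigl(1-2^{-M_0}e^{a_k\mu}\bigr)\;-\;4C\,a_k^{M_0}\,\varepsilon_N^\star,
\]
and factoring out $e^{\mu(a_k-1)}$ (which is bounded below by $1$) gives a bound of the form $e^{\mu(a_k-1)}(1-\Delta'_{\Gamma,\ref{prop:akRk}})$ with $\Delta'\leq 2^{-M_0}e^{a_k\mu}+4Ca_k^{M_0}\varepsilon_N^\star$.

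The main obstacle is balancing the two requirements on $M_0$: it must be large enough that the Poisson tail is negligible, yet small enough that the TV-inflated error $a_k^{M_0}\varepsilon_N^\star$ still vanishes. This works because $\mu\leq\binom{q_0}{2}\bar\alpha(1+o(1))\leq C'\alpha$ (using $q^2=O(\log N)$ and Proposition \ref{prop:pxy} for an upper bound on $\mu$), and $a_k$ is bounded by an absolute constant $A$ via Lemma \ref{lem-akUB}. Choosing $M_0=C_1\alpha$ with $C_1$ large (depending on $A,C'$) forces $2^{-M_0}e^{a_k\mu}\leq(2^{-C_1}e^{AC'})^\alpha$, which tends to $0$ as $\alpha\to\infty$ (this fixes $\alpha_0$). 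With this $M_0$, the TV term is $4CA^{C_1\alpha}\varepsilon_N^\star$, which vanishes as $N\to\infty$ for fixed $\alpha$ (this fixes $N_0$). Finally, for the condition $\limsup_{\Gamma'}\binom{q}{2}^{-1}K\Delta'=0$: the bound $K\leq\bar\alpha^{-1}\log\gamma^{-1}$ from \eqref{eq:K} and $\binom{q}{2}\gtrsim\log\log N$ give $K/\binom{q}{2}\lesssim\alpha^{-1}\log\gamma^{-1}/\log\log N$, and both summands of $\Delta'$ decay quickly enough after the $N\to\infty$ then $\alpha\to\infty$ limits to kill this ratio. The delicate point is that the $\alpha$-dependent constant $A^{C_1\alpha}$ in the TV error is harmless only because the limit in $N$ is taken strictly before the limit in $\alpha$, in the order specified in Section \ref{subsec-notation}.
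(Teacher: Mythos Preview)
Your overall strategy (truncate, apply the TV bound, control the Poisson tail, then insert $\mu\geq\binom{q_0}{2}\bar\alpha(1-\Delta_{\Gamma,\ref{prop:pxy}})$) is exactly the paper's. The gap is in the final limsup verification for the Poisson tail term, and it stems from a slip in your bound on $K/\binom{q}{2}$: since $\bar\alpha=\alpha/\log N$, one has $K\leq\bar\alpha^{-1}\log\gamma^{-1}=(\log N)\alpha^{-1}\log\gamma^{-1}$, so
\[
\frac{K}{\binom{q}{2}}\;\lesssim\;\frac{\log N}{\alpha\,\log\log N}\,\log\gamma^{-1},
\]
not $\alpha^{-1}\log\gamma^{-1}/\log\log N$ as you wrote. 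This ratio diverges as $N\to\infty$ for fixed $\alpha$ (in the regime $q^2\sim\log\log N$, say). Your Chernoff tail $2^{-M_0}e^{a_k\mu}\leq(2^{-C_1}e^{AC'})^\alpha$ is constant in $N$, so the product $\binom{q}{2}^{-1}K\cdot 2^{-M_0}e^{a_k\mu}$ need not vanish after the $N\to\infty$ limit.

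The paper avoids this by using the cruder-looking but structurally sharper tail bound $\mathbb{E}[a_k^{\mathcal R}\mathbf 1_{\mathcal R\geq r_0+1}]\leq e^{\mu(a_k-1)}\mu^{r_0+1}/(r_0+1)!$, which keeps one explicit factor of $\mu$. That single factor is what saves the day: writing $\mu\leq\binom{q}{2}\bar\alpha(1+\Delta)$, the $\binom{q}{2}$ cancels $\binom{q}{2}^{-1}$ and the $\bar\alpha$ combines with $K\leq\bar\alpha^{-1}\log\gamma^{-1}$ to give the harmless $\log\gamma^{-1}$, leaving $(\log\gamma^{-1})(C_0\alpha)^{r_0}/(r_0+1)!$, which Stirling kills with $r_0\sim e^2C_0\alpha$. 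Your TV-error analysis is fine (and matches the paper's); only the Poisson tail needs this refinement.
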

\begin{proof}
  Let $\mathcal R$ be distributed as $\mathcal P(\mu)$ and {recall}
  $\varepsilon_N^\star$ from  Proposition \ref{prop:2moments}. For all $r_0\in \mathbb N$, we have
\begin{equation} \label{eq:toDeltaprime} 
\begin{aligned} \DE_{\mathbf x}^{o_k,\mathbf y}\left[ a_k^{\mR_k}\right] & \geq \DE_{\mathbf x}^{o_k,\mathbf y}\left[ a_k^{\mR_k} \mathbf{1}_{\mR_k \leq r_0} \right] \\
  & \geq \DE \left[a_k^{\mathcal R} \mathbf{1}_{\mathcal R\leq r_0}\right] - a_k^{r_0} \varepsilon_N^\star\\
  & \geq e^{\mu(a_k-1)}\left(1- \frac{\mu^{r_0+1}}{(r_0+1)!} - a_k^{r_0} \varepsilon_N^\star\right),
\end{aligned}
\end{equation}
where we have used that $\DE[a^{\mathcal R}] = e^{\mu(a-1)}$, that  $\DE[a^{\mathcal R} \mathrm{1}_{\mathcal R \geq r}] \leq e^{\mu(a-1)} \frac{\mu^r}{r!}$ for all $a>0,r\in\mathbb N$ and that $e^{\mu(a_k-1)}\geq 1$.

Recall  the constants  $\Delta_{\Gamma}=\Delta_{\Gamma,\ref{prop:pxy}}>0$ from Proposition \ref{prop:pxy},
which satisfy $\limsup_{\Gamma'} \Delta_{\Gamma} = 0$. Uniformly on $\mathbf x,\mathbf y,k$, we have:
\begin{equation} \label{eq:approx_mu}
  \left|\mu-\binom{{q_0}}{2} \bar \alpha \right| \leq \binom{{q_0}}{2} \bar \alpha \Delta_{\Gamma}.
\end{equation}
Next, define 
$c:=\sup_{k} a_k \in (1,\infty)$ {by \eqref{eq-akUB},}
and 
\[
\Delta'_{\Gamma',\ref{prop:akRk}}=\Delta_\Gamma' := \inf_{r_0 \in \mathbb N} \left\{ \frac{\left(\binom {{q_0}} 2 \bar \alpha (1+\Delta_\Gamma)\right)^{r_0+1}}{(r_0+1)!} + c^{r_0}  \varepsilon^\star_N\right\}.
\]
We first show that 
 $\limsup_{\Gamma'} \binom{{q_0}}{2}^{-1} K \Delta'_\Gamma = 0$ and then that $\Delta'_\Gamma \in [0,\frac{1}{2}]$ for $\alpha$ and $N$ large enough. Together with \eqref{eq:toDeltaprime} and \eqref{eq:approx_mu}, this yields the proposition. 
Since $K\leq \bar \alpha^{-1} \log \gamma^{-1}$ by \eqref{eq:K}, we have for all $r_0\in \mathbb N$,
\begin{align*}
  \binom{q}{2}^{-1} K \frac{\left(\binom {{q_0}} 2 \bar \alpha (1+\Delta_\Gamma)\right)^{r_0+1}}{(r_0+1)!} \leq (\log \gamma^{-1})(1+\Delta_\Gamma) \frac{\left(\binom 
{{q}} 2 \bar \alpha (1+\Delta_\Gamma)\right)^{r_0}}{(r_0+1)!}.
\end{align*}
Moreover, $\limsup_{N} \binom q 2 \bar \alpha \leq C_0\alpha$ with $C_0 \in (0,\infty)$ by hypothesis. Hence, if we define $\Delta_{\tilde \Gamma} = \limsup_N \Delta_{\Gamma}$, the supremum limit over ${\Gamma'}$ of the right hand side of the last display is less than
\[
  \limsup_{{\widehat  \Gamma}} \left\{(\log \gamma^{-1})(1+\Delta_{\tilde \Gamma}) \frac{\left(C_0\alpha (1+\Delta_{\tilde \Gamma})\right)^{r_0}}{(r_0+1)!}\right\}.
\]
 (Recall that $\Gamma'=(N,\hat{\Gamma})$.)
If we choose $r_0 =  \lceil e^2 C_0 \alpha (1+\Delta_{\tilde \Gamma})\rceil$ and use Stirling’s approximation $r! \geq (r/e)^r$ valid for all $r\in \mathbb N$, we find that the last display is smaller than 
\begin{equation*} 
  \limsup_{{\widehat \Gamma}} \left\{(\log \gamma^{-1})(1+\Delta_{\tilde \Gamma}) e^{- e^2 C_0 \alpha }\right\} = 0,
\end{equation*}
where the equality holds since we take the limit $\alpha \to \infty$ {with $\gamma$ fixed}.
Hence, by choosing $r_0 =  \lceil e^2 C_0 \alpha (1+\Delta_{\tilde \Gamma})\rceil$ we have shown that
\begin{equation} \label{eq:limsupKDelta}
  \limsup_{ \Gamma'} \binom q 2^{-1} K \Delta_\Gamma' \leq \limsup_{ \Gamma'}  \left\{ \binom q 2^{-1} K c^{\lceil e^2 C_0 \alpha (1+\Delta_{\tilde \Gamma})\rceil} \varepsilon_N^\star\right\}.
\end{equation}
We now prove that the last {limit superior vanishes}. 
By definition of $\varepsilon^\star_N$ in Proposition \ref{prop:2moments}, we have
\[\binom q 2 ^{-1} K\varepsilon^\star_N \leq Cq \bar \alpha^{-1} \log \gamma^{-1}  \bar \alpha (1+\Delta_\Gamma)\left(\frac{\alpha}{\log N} + \frac{\log \log N}{\gamma \log N}\right),\] 
Using that $\limsup_N q^2/ \log N < \infty$, we obtain that $\limsup_N \binom q 2^{-1} K\varepsilon_N^\star = 0$ and thus $\limsup_{{\Gamma'}} \binom q 2^{-1} K \Delta_\Gamma' = 0$ by \eqref{eq:limsupKDelta}.

To conclude, we prove that $\Delta'_\Gamma \leq 1/2$. If we choose again $r_0 =  \lceil e^2 C_0 \alpha (1+\Delta_{\tilde \Gamma})\rceil$, we find using Stirling’s approximation as before that
$\limsup_N \Delta_\Gamma' \leq e^{- e^2 C_0  \alpha}$. 
So if we let $\alpha$ large enough followed by $N$ large enough (depending on $\tilde \Gamma$) we obtain that $\Delta_\Gamma' \leq  1/2$.
\end{proof}

Here is our last technical estimate. Recall the definition of $D_N$ in Proposition \ref{prop:lowerBoundDN_Upsilon}.
\begin{proposition} \label{prop:D_N}
There exist $c,c'>0$ such that 
\begin{equation}\label{eq:lowerboundDNProp} D_N \geq 1- Ke^{\varepsilon_0 q(c'  \log \varepsilon_0^{-1} - c \delta^{-2})}.
\end{equation}
\end{proposition}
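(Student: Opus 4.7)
\textbf{Proof plan for Proposition \ref{prop:D_N}.} The plan is to bound $\DP(A_k^c)$ uniformly in $k$ and then apply a union bound. Recall that $A_k^c$ occurs iff there are at least $\varepsilon_0 q$ indices $i$ for which the walk $S^i$ is ``bad'' at step $k$, meaning that either $S^i_{L_k}\notin B(0,\delta^{-1}L_k^{1/2})$ or $S^i_{L_{k+1}}\notin B(0,\delta^{-1}L_{k+1}^{1/2})$. The first step is to show, by a standard tail estimate for the planar simple random walk (Hoeffding coordinatewise, then a union bound over the two coordinates and over the two time constraints), that for every fixed $k$ and every single index $i$,
\[
\DP_0\bigl(S^i_{L_k}\notin B(0,\delta^{-1}L_k^{1/2})\bigr)+\DP_0\bigl(S^i_{L_{k+1}}\notin B(0,\delta^{-1}L_{k+1}^{1/2})\bigr)\leq C e^{-c\delta^{-2}}
\]
for universal constants $C,c>0$. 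Call the resulting upper bound $p=p(\delta)\leq Ce^{-c\delta^{-2}}$.

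Since the $q$ walks are mutually independent, the number $N_k$ of bad indices at step $k$ is stochastically dominated by a Binomial$(q,p)$ random variable. By Chernoff/Markov applied to $\binom{q}{\lceil \varepsilon_0 q\rceil} p^{\lceil \varepsilon_0 q\rceil}$ together with the crude estimate $\binom{q}{m}\leq (eq/m)^m$, I obtain
\[
\DP(A_k^c)=\DP(N_k\geq \varepsilon_0 q)\leq \left(\frac{e p}{\varepsilon_0}\right)^{\varepsilon_0 q}\leq \exp\Bigl(\varepsilon_0 q\bigl(c'\log \varepsilon_0^{-1}-c\delta^{-2}\bigr)\Bigr),
\]
after absorbing the constant $1+\log(Ce)$ into the coefficient in front of $\log \varepsilon_0^{-1}$ (valid since $\varepsilon_0<1$).

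The proof concludes by a simple union bound over $k=1,\dots,K$:
\[
D_N=\DP_0^{\otimes q}\Bigl(\bigcap_{k=1}^K A_k\Bigr)\geq 1-\sum_{k=1}^K\DP(A_k^c)\geq 1-Ke^{\varepsilon_0 q(c'\log \varepsilon_0^{-1}-c\delta^{-2})},
\]
which is the desired inequality. There is no substantive obstacle; the only item requiring a little care is producing a single-walk Gaussian tail bound with the correct $e^{-c\delta^{-2}}$ form for the discrete walk (easily supplied by Hoeffding), and then ensuring the combinatorial prefactor in the Chernoff bound is subsumed into the $c'\log \varepsilon_0^{-1}$ term.
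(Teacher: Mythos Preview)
Your proposal is correct and follows essentially the same approach as the paper: a union bound over $k$, then for each $k$ the observation that $A_k^c$ forces at least $\varepsilon_0 q$ walks to be ``bad'', combined with Hoeffding's inequality for the single-walk probability and a binomial-coefficient estimate (the paper uses Stirling, you use $(eq/m)^m$, which is equivalent). The only cosmetic difference is that the paper writes the union bound over subsets of indices directly rather than invoking stochastic domination by a Binomial.
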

\begin{proof} Define $H^i_{k} = \{S_{L_k}^i \notin B(0,\delta^{-1} L_k^{1/2}) \text{ or } S_{L_{k+1}}^i \notin B(0,\delta^{-1} L_{k+1}^{1/2})\}$. By definition of $D_N$ and the union bound, 
\begin{equation}\label{eq:lowerBoundDN}
D_N \geq 1-\sum_{k=1}^K \DP_0^{\otimes q}(A_k^c).
\end{equation} 
Let $p = \lfloor \varepsilon_0 q \rfloor$. The event $A_k^c$ implies that there exists $i_1<\dots<i_p \leq q$ such that $H^{i_r}_k$ holds for all $r\leq p$. Hence, by independence of the walks,
\[\DP_0^{\otimes q}(A_k^c) \leq \binom{q}{p} \DP_0(H^1_k)^p.\]
By Hoeffding's inequality there exists $c>0$ such that $\DP_0(H_k^1) \leq e^{-c\delta^{-2}}$. Since $\varepsilon_0$ is small, we further have that $\binom q p \leq e^{c'\varepsilon_0 q \log \varepsilon_0^{-1}}$ for some $c'>0$ via Stirling’s approximation.
\end{proof}

\subsection{Proof of Theorem \ref{th:main}}
By Proposition \ref{prop:lowerBoundDN_Upsilon}, we have
\begin{equation}
 \binom q 2^{-1} \log \IE\left[ W_N^q\right]  \geq \binom q 2 ^{-1}\log D_N + \binom q 2^{-1}\sum_{k=1}^K \log \Upsilon_k.
\end{equation}
We first observe that 
\begin{equation} \label{eq:limsupGammaDN}
  \limsup_\Gamma \binom q 2^{-1} (-\log D_N) = 0.
\end{equation}
Since $\log \log N =  O(q)$, we can find $c_0>0$ such that $q \geq c_0 \log \log N$ for $N$ large enough. Now, because we take the limit $\delta\to 0$ before $\varepsilon_0\to 0$, we can assume that in \eqref{eq:lowerboundDNProp} we have $\varepsilon_0(c'  \log \varepsilon_0^{-1} - c \delta^{-2}) < -2c_0^{-1}$, so that using \eqref{eq:K} we have $D_N \geq 1-\log  \gamma^{-1} \alpha^{-1} \log Ne^{-2\log \log N}$ which converges to 1 as $N\to\infty$. This gives \eqref{eq:limsupGammaDN}.

Next, by Proposition \ref{prop:iteration} and Proposition \ref{prop:akRk},
\begin{equation} \label{eq:LBsumlogUpsilon}
  \binom q 2^{-1}\sum_{k=1}^K \log \Upsilon_k \geq \binom q 2^{-1} \binom {{q_0}} 2  (1-{\Delta_{\Gamma,\ref{prop:pxy}}}) \bar \alpha \sum_{k=1}^K   (a_k-1) +\binom q 2^{-1} K\log(1-{\Delta'_{\Gamma,\ref{prop:akRk}}}).
\end{equation}
Since ${\Delta'_{\Gamma,\ref{prop:akRk}}} \leq 1/2$, we have that $-\binom q 2^{-1} K\log(1-{\Delta'_{\Gamma,\ref{prop:akRk}}}) \leq C\binom{q}{2}^{-1} K
{\Delta'_{\Gamma,\ref{prop:akRk}}}$. Hence by the definition of ${\Delta'_{\Gamma,\ref{prop:akRk}}}$ {and \eqref{eq:K}},
$\limsup_\Gamma \binom q 2 ^{-1} K\left(-\log(1-{\Delta'_{\Gamma,\ref{prop:akRk}}})\right) = 0$. This deals with the second term of the right-hand side of \eqref{eq:LBsumlogUpsilon}.
Concerning the first term, we will show that
\begin{equation} \label{eq:LB_MainTermInMainProof}
\liminf_\Gamma \binom q 2^{-1} \binom {{q_0}} 2  (1-\Delta_{\Gamma,\ref{prop:pxy}}) \bar \alpha \sum_{k=1}^K   (a_k-1) \geq \lambda(\hat \beta)^2.
\end{equation}
First, we rely on Proposition \ref{prop:ak} to find that
\[\bar \alpha \sum_{k=1}^K (a_k - 1) \geq \bar \alpha \sum_{k=1}^K \left(e^{\lambda_k^2}-1\right) - \bar \alpha K {\Delta_{\Gamma,\ref{prop:ak}}},\]
where $\limsup_{\Gamma} \bar \alpha K |{\Delta_{\Gamma,\ref{prop:ak}}}| \leq \limsup_{\Gamma} \log \gamma^{-1}|{\Delta_{\Gamma,\ref{prop:ak}}}| = 0$ by \eqref{eq:K} and the definition of ${\Delta_{\Gamma,\ref{prop:ak}}}$. Now, recalling the definition of $\lambda_k^2$ in \eqref{eq:def_lambdak2}, observe that
\begin{align*}
  \bar \alpha \sum_{k=1}^K \left(e^{\lambda_k^2}-1\right) & \geq \bar \alpha \sum_{k=1}^K  \frac{\hat \beta^2 \gamma e^{k\bar \alpha}}{1-\hat \beta^2 \gamma e^{k\bar \alpha}}.
\end{align*}  
Therefore, by Riemann sum approximation and the lower bound on $K$ in \eqref{eq:K} (recall that ${q_0}=\lfloor (1-\varepsilon_0)q\rfloor$):
\begin{align*}
  &\liminf_N \binom q 2^{-1} \binom {{q_0}} 2 \bar \alpha \sum_{k=1}^K \left(e^{\lambda_k^2}-1\right)\\
  & \geq  (1-\varepsilon_0)^2 \int_{0}^{\log \gamma^{-1}} \frac{\hat \beta^2 \gamma e^x}{1-\hat \beta^2 \gamma e^x} \dd x = (1-\varepsilon_0)^2 \left( \log(1-\gamma \hat \beta^2)- \log(1- \hat \beta^2)\right),
\end{align*}
where the last quantity converges to $\lambda(\hat \beta^2)$ as $\gamma,\varepsilon_0\to 0$. This gives \eqref{eq:LB_MainTermInMainProof}.

Putting everything together yields the lower bound
$\liminf_\Gamma \binom q 2^{-1} \log \IE\left[ W_N^q\right] \geq \lambda(\hat \beta)^2$, that is $\liminf_N \binom q 2^{-1} \log \IE\left[ W_N^q\right]  \geq \lambda(\hat \beta)^2$.
\qed

\subsection{Proof of Theorem \ref{th:qlarge}}\label{sec:qlarge}
  Introduce the event 
  \[ \mathcal{A}=\{ S_{2k}^i= 0, k=0,\ldots, \lfloor N/2\rfloor, i=1,\ldots,q\}.\]
  Note that $\DP(\mathcal{A})\geq (1/4)^{{q}\lfloor N/2 \rfloor}$. On the event $\mathcal{A}$ we
  have a total of at least $(N/2)\binom q 2$ intersections. Substituting in
  \eqref{eq:formula_moments} then yields that
  \[ \IE[W_N^q]\geq e^{\beta_N^2 (N/2)\binom q 2} (1/4)^{{q}\lfloor N/2 \rfloor}.\]
  This gives the result. \qed

\section{Estimates for ``two moments suffice''} \label{sec:estimate2moments}
\subsection{Two-particle intersection probability}
The goal of this section is to give an estimate on $p_{(i,j)} = \DP_{\mathbf x}^{o_k,\mathbf y} (\tau_k^{(i,j)}<\infty)$ used in the proof of Proposition \ref{prop:2moments}. 
To simplify future notations, we write $\tau_k=\tau_k^{(1,2)}$ and $p_{\mathbf w,\mathbf z}=\DP_{\mathbf w}^{o_k,\mathbf z}(\tau_k<\infty)$ for $\mathbf w\sim_{o_k} \mathbf z\in \mathbb Z^2 \times \mathbb Z^2$. The following proposition provides the desired asymptotics. (Note that $p_{(i,j)} = p_{(x_i,x_j),(y_i,y_j)}$).
\begin{proposition} \label{prop:pxy} There exists $\Delta_{\Gamma,\ref{prop:pxy}}>0$ such that $\limsup_{{\Gamma'}} \Delta_{\Gamma,\ref{prop:pxy}} = 0$ and
\begin{equation} \label{eq:pxy}
\sup_{k\leq K} \sup_{\substack{\mathbf x\in B_{2,k},\mathbf y\in B_{2,k+1}\\ \mathbf x \sim_{o_k} \mathbf y}} \left|p_{\mathbf x,\mathbf y} - \bar \alpha \right| \leq  \bar \alpha \Delta_{\Gamma,\ref{prop:pxy}}.
\end{equation}
\end{proposition}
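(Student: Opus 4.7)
The plan is to follow the structure of the proof of Proposition \ref{prop:ak}: first reduce $p_{\mathbf x,\mathbf y}$ to the \emph{unconditional} intersection probability via a Markov decomposition at the first intersection time combined with LCLT-based ratio estimates, and then compute that unconditional probability by standard 2D potential theory applied to the difference walk $D_n:=S_n^1-S_n^2$.

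First I would apply the strong Markov property of $(S^1,S^2)$ at $\tau_k=\tau_k^{(1,2)}$ to write
\begin{equation*}
\DP_{\mathbf x}^{\otimes 2}(\tau_k<\infty,\,S^1_{o_k}=y_1,\,S^2_{o_k}=y_2)=\hspace{-2mm}\sum_{n\in\mathcal T_k}\sum_{z\in B(0,Ml_k^{1/2})}\DP_{\mathbf x}^{\otimes 2}(\tau_k=n,\,S_n^1=S_n^2=z)\prod_{i=1,2}p_{o_k-n}(y_i-z).
\end{equation*}
Since $o_k-n\geq \nu_2 l_k$ for $n\in\mathcal T_k$ and $|x_i|,|y_i|,|z|=O(\sqrt{\nu_2 l_k})$ in the relevant range, the LCLT applied precisely as in the derivation of \eqref{eq:lowerBoundFromLLT} gives $\prod_{i=1,2} p_{o_k-n}(y_i-z)=(1+O(\theta'_\Gamma))\prod_{i=1,2} p_{o_k}(y_i-x_i)$ with $\limsup_{\Gamma'}\theta'_\Gamma=0$. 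Dividing by $\DP_{\mathbf x}^{\otimes 2}(S^i_{o_k}=y_i,\,i=1,2)\sim \prod_i p_{o_k}(y_i-x_i)$ (again by LCLT) reduces the claim to showing $\DP_{\mathbf x}^{\otimes 2}(\tau_k<\infty)=\bar\alpha(1+o(1))$.

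Next I would strip off the ball-confinement condition at cost $Ce^{-cM^2}=o(\bar\alpha)$ by Hoeffding (harmless once $M\to\infty$ in the order of limits), and pass to $D_n=S_n^1-S_n^2$, which by Subsection~\ref{subsec-notation} satisfies $D_n\eqlaw S_{2n}$. Applying the Markov property at $t_0:=\nu_1 l_{k-1}$,
\begin{equation*}
\DP_{\mathbf x}^{\otimes 2}(\tau_k<\infty)=\sum_z \DP_{d_0}(D_{t_0}=z)\,\DP_z(\tau_0^D\leq l_k)+o(\bar\alpha),
\end{equation*}
with $d_0=x_1-x_2$ and $|d_0|\leq 2\delta^{-1}L_k^{1/2}=O(\sqrt{l_{k-1}})$. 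By LCLT, $D_{t_0}$ is approximately Gaussian at scale $\sqrt{2\nu_1 l_{k-1}}$, hence concentrated on $|z|^2\sim \nu_1 l_{k-1}$; and by the standard 2D potential-theoretic estimate $\DP_z(\tau_0^D\leq n)=\log(n/|z|^2)/\log n+O(1/\log n)$ for $|z|^2\leq n$ (derived from the potential-kernel expansion), the integral evaluates to $\log(l_k/t_0)/\log l_k+O(1/\log l_k)$. Using $l_k=\lceil N^{\gamma f_k}\rceil$, $f_k=e^{k\bar\alpha}$, a direct calculation yields $\log(l_k/l_{k-1})=\gamma f_{k-1}(e^{\bar\alpha}-1)\log N$ and $\log l_k=\gamma f_k\log N$, so the ratio equals $1-e^{-\bar\alpha}-\log\nu_1/(\gamma f_k\log N)+O(1/\log l_k)$, which is $\bar\alpha(1+o(1))$ uniformly in $k\leq K$ in the prescribed order of limits (since $(1-e^{-\bar\alpha})/\bar\alpha\to 1$ as $\bar\alpha\to 0$ after $N\to\infty$, and $\log\nu_1/(\gamma\alpha)\to 0$ as $\alpha\to\infty$).

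The hard part will be controlling the hitting asymptotic uniformly over $|z|^2$ up to order $l_k$, since the Gaussian tail of $D_{t_0}$ extends into this regime where the simple log-ratio expansion degrades. I would invoke the exact identity $\DP_z(\tau_0^D\leq n)=1-\DE_z[a(D_n);\,\tau_0^D>n]/a(z)+o(1)$ (with $a$ the discrete potential kernel of $D$) together with Hoeffding tail bounds on $|D_{t_0}|$ to control the contribution of $|z|^2>l_k^{1-\varepsilon}$, which is exponentially small in $l_k^\varepsilon$. All accumulated multiplicative errors (the LCLT $\theta'_\Gamma$, the Hoeffding $e^{-cM^2}$, and the $O(1/\log l_k)$ and $\log\nu_1/(\gamma\alpha)$ corrections from the Green-function computation) then collapse into a single $\Delta_{\Gamma,\ref{prop:pxy}}$ with $\limsup_{\Gamma'}\Delta_{\Gamma,\ref{prop:pxy}}=0$, completing the proof.
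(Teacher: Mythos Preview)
Your first reduction---removing the endpoint conditioning via the strong Markov property at $\tau_k$ together with an LCLT ratio estimate---is precisely the paper's Lemma~\ref{lem:removeConditioning}, so that step is fine.

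There is, however, a genuine gap in your second step. You claim the ball-confinement condition can be dropped at cost $Ce^{-cM^2}$ and that this is $o(\bar\alpha)$, ``harmless once $M\to\infty$''. But in the prescribed order of limits $N\to\infty$ comes \emph{first} with $M$ held fixed, so $\bar\alpha=\alpha/\log N\to 0$ while $e^{-cM^2}$ stays constant; thus $e^{-cM^2}/\bar\alpha\to\infty$ and the contribution to $\Delta_{\Gamma,\ref{prop:pxy}}$ diverges. The paper (Lemma~\ref{lem:remove_barrier}) repairs this by \emph{not} discarding the intersection event: one bounds $\DP_{\mathbf x}^{\otimes 2}(\tilde\tau_k<\infty,\ \sigma_k^i\leq\tilde\tau_k)$ via Markov's property at the exit time $\sigma_k^i$, and then uses that the exiting walk sits at distance $\geq M l_k^{1/2}$ from the other, so that Le~Gall's estimate on $h_k(z)=\DP_z(T_0\leq l_k)$ yields a further factor $1/\log l_k\leq 1/(\gamma\log N)$. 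The resulting error $Ce^{-cM^2}/(\gamma\log N)$ \emph{is} $o(\bar\alpha)$ (indeed $e^{-cM^2}/(\gamma\alpha)\to 0$ as $\alpha\to\infty$). A bare Hoeffding bound on the exit event never sees the intersection and therefore misses this indispensable extra $1/\log N$.

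Your final step---evaluating $\tilde p_{\mathbf x}$ by conditioning the difference walk at time $t_0=\nu_1 l_{k-1}$ and invoking the discrete hitting asymptotic $\DP_z(\tau_0\leq n)\approx\log(n/|z|^2)/\log n$---is a genuinely different route from the paper. The paper instead passes to Brownian motion via KMT (Lemma~\ref{lem:KMT}) and then applies Spitzer's asymptotic $\DP_0(\inf_{s\in[1,t]}|B_s|\leq r)\sim\log t/\log r^{-2}$ uniformly on the compact $t\in[1,1+e^{e\alpha}]$ (Lemma~\ref{lem:G_k}); the authors note in a footnote that available uniform random-walk results were not sharp enough for a direct attack. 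Your computation of the leading term $(1-e^{-\bar\alpha})-\log\nu_1/(\gamma f_k\log N)=\bar\alpha(1+o(1))$ is correct, and averaging over the Gaussian law of $D_{t_0}$ may indeed sidestep the uniformity issue by washing out the constant in the potential kernel; but the two-sided control of $\DP_z(\tau_0\leq l_k)$ over the full range of $|z|$ that you flag as ``the hard part'' is real work that the paper's Brownian detour avoids.
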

We will prove \eqref{eq:pxy} using a sequence of lemmas (we refer to the end of the section for the proof of Proposition \ref{prop:pxy}).
As a first step, we show that $p_{\mathbf x,\mathbf y}$ can be replaced by $p_{\mathbf x} = \DP_{\mathbf x}^{\otimes 2} (\tau_k<\infty)$, i.e.\ $p_{\mathbf x}$ is defined as $p_{\mathbf x,\mathbf y}$ except there is no conditioning on the endpoint.
\begin{lemma} \label{lem:removeConditioning} There exists $\Delta_{\Gamma,\ref{lem:removeConditioning}} >0$ satisfying $\limsup_{{\Gamma'}} \Delta_\Gamma = 0$ such that for all $k\leq K$ and all $\mathbf x\in B_{2,k}$,
\begin{equation} \label{eq:removeCondi}
  \sup_{\substack{\mathbf y\in B_{2,k+1}\\ \mathbf y \sim_{o_k} \mathbf x}} \left|p_{\mathbf x,\mathbf y} - p_{\mathbf x} \right| \leq p_{\mathbf x} \Delta_{\Gamma,\ref{lem:removeConditioning}}.
\end{equation}
\end{lemma}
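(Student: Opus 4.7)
The plan is to remove the endpoint conditioning in $p_{\mathbf x,\mathbf y}$ by applying the strong Markov property at the stopping time $\tau_k$ and then comparing the resulting transition kernels via the local central limit theorem (LCLT, Theorem~\ref{th:LCLT}). The starting identity is
\[ p_{\mathbf x,\mathbf y} \,=\, \frac{\DE_{\mathbf x}^{\otimes 2}\!\left[\mathbf{1}_{\tau_k<\infty}\,\mathbf{1}_{S^1_{o_k}=y_1}\mathbf{1}_{S^2_{o_k}=y_2}\right]}{p_{o_k}(y_1-x_1)\,p_{o_k}(y_2-x_2)} \,=\, \frac{1}{\prod_{i}p_{o_k}(y_i-x_i)}\;\DE_{\mathbf x}^{\otimes 2}\!\left[\mathbf{1}_{\tau_k<\infty}\prod_{i=1,2} p_{o_k-\tau_k}(y_i-S^i_{\tau_k})\right]. \]
Hence it suffices to show that, uniformly on $\{\tau_k<\infty\}$ and over admissible $\mathbf y$,
\[ \prod_{i=1,2}\frac{p_{o_k-\tau_k}(y_i-S^i_{\tau_k})}{p_{o_k}(y_i-x_i)} \,=\, 1+\Delta_{\Gamma,\ref{lem:removeConditioning}}, \]
after which integration against $\mathbf{1}_{\tau_k<\infty}$ gives $p_{\mathbf x,\mathbf y}=(1+\Delta_{\Gamma,\ref{lem:removeConditioning}})\,p_{\mathbf x}$, which is \eqref{eq:removeCondi}.

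To justify the LCLT approximation I will collect the a priori size bounds available on the event $\{\tau_k<\infty\}$: the definition of $\tau_k$ (via $\sigma^i_k$) forces $|S^i_{\tau_k}|\le M l_k^{1/2}$; by Remark~\ref{rem:2lk}, $\tau_k\le \nu_1 l_{k-1}+l_k\le 2l_k$; and by \eqref{eq:boundl_k}-(ii) together with the definitions of the balls, $|x_i|\le\delta^{-1}L_k^{1/2}$ and $|y_i|\le\delta^{-1}L_{k+1}^{1/2}\le 2\delta^{-1}\nu_2^{1/2}l_k^{1/2}$, while $o_k-\tau_k\ge o_k-2l_k\ge(\nu_2-2)l_k$. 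Since $l_k\ge N^\gamma$, each of the displacements above is $o(n)$ relative to its associated time $n$ as $N\to\infty$, so Theorem~\ref{th:LCLT} yields $p_n(z)=2\bar p_n(z)(1+o(1))$ with $\bar p_n(z)=(\pi n)^{-1}e^{-|z|^2/n}$, uniformly, provided the parity constraints are met (which they are, since $\mathbf y\sim_{o_k}\mathbf x$ and $\tau_k<\infty$ imply the strong-Markov decomposition preserves consistent parities).

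The remaining work is to show that the Gaussian ratio is close to $1$. A direct computation (identical in form to \eqref{eq-ratiop}) gives
\[ \frac{\bar p_{o_k-\tau_k}(y_i-S^i_{\tau_k})}{\bar p_{o_k}(y_i-x_i)} \,=\, \frac{o_k}{o_k-\tau_k}\exp\!\left(\frac{|y_i-x_i|^2}{o_k}-\frac{|y_i-S^i_{\tau_k}|^2}{o_k-\tau_k}\right). \]
The prefactor is $1+O(\tau_k/o_k)=1+O(\nu_2^{-1})$. Expanding the exponent and using Cauchy--Schwarz, each term is controlled by the size bounds above: the piece $|y_i|^2(o_k^{-1}-(o_k-\tau_k)^{-1})$ is $O(\delta^{-2}\nu_2^{-1})$; the cross term $\langle y_i,x_i\rangle/o_k$ is $O(\delta^{-2}L_k^{1/2}L_{k+1}^{-1/2})=O(\delta^{-2}e^{-\gamma\alpha/4})$ via Lemma~\ref{lem:estimates}-(i); the term $\langle y_i,S^i_{\tau_k}\rangle/(o_k-\tau_k)$ is $O(\delta^{-1}M\nu_2^{-1/2})$; and the quadratic pieces give $O(\delta^{-2}\nu_2^{-1}+M^2\nu_2^{-1})$. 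All of these quantities vanish in $\limsup_{\Gamma'}$: recall that $\gamma$ is held fixed while $\alpha\to\infty$ first (so $\gamma\alpha\to\infty$), then $\nu_1,\nu_2\to\infty$, then $\delta\to 0$, then $M\to\infty$; the order is exactly what is needed for all the above error terms to vanish successively.

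Putting the LCLT error and the Gaussian-ratio error together yields the desired uniform estimate, with $\Delta_{\Gamma,\ref{lem:removeConditioning}}$ the sum of the LCLT relative error (which is a power of $N^{-\gamma}$ times constants in $\widetilde\Gamma$) and the explicit pieces bounded above. The main obstacle I expect is purely bookkeeping: checking that the $k$-dependent LCLT error is controlled uniformly in $k\le K$ (here $l_k\ge N^\gamma$ for every such $k$, so this should go through unchanged) and that the prefactor from handling the denominator $p_{o_k}(y_i-x_i)$ by LCLT does not spoil the cancellation. Since the structure is a direct analogue of the LCLT argument already carried out in the proof of Proposition~\ref{prop:ak}, the same estimates (in particular the derivation of \eqref{eq-ratiop} and the $b_1$-type bounds in \eqref{eq:defAlpha'}) can be recycled with trivial modifications.
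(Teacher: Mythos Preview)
Your proposal is correct and follows essentially the same approach as the paper: apply the strong Markov property at $\tau_k$, reduce to controlling the ratio of transition kernels, and bound that ratio via the LCLT together with the a priori size constraints on $\tau_k$, $S^i_{\tau_k}$, $x_i$, $y_i$ (the paper packages this as the quantity $V_k$ and the explicit constants $b_0,b_1$ in \eqref{eq:equationVk}--\eqref{eq:defb_0b_1}). One small bookkeeping correction: the term $|x_i|^2/o_k$ is not $O(\delta^{-2}\nu_2^{-1})$ --- the crude bound $L_k\le 4\nu_2 l_k$ only gives $O(\delta^{-2})$, which does not vanish --- but rather $O(\delta^{-2}e^{-\gamma\alpha/2})$, obtained (just as you did for the cross term) from $L_k\le 4\nu_2 l_{k-1}$ and $l_{k-1}/l_k\le e^{-\gamma\alpha/2}$; this still goes to $0$ under $\limsup_{\Gamma'}$ and is exactly the origin of the $\delta^{-2}e^{-\gamma\alpha/4}$ contribution in the paper's $b_1$.
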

\begin{proof}
By Markov's property, we have
\begin{align*}
p_{\mathbf x,\mathbf y}-p_{\mathbf x} &= \DE_{\mathbf x}^{\otimes 2}\left[\mathbf{1}_{\tau_k<\infty} \mathbf{1}_{S_{o_k}^1 = y_1,S_{o_k}^2 = y_2}\right]\DP_{\mathbf x}^{\otimes 2}(S_{o_k}^1 = y_1,S_{o_k}^2 = y_2)^{-1} - p_{\mathbf x}\\
& = \DE_{\mathbf x}^{\otimes 2}\left[\mathbf{1}_{\tau_k<\infty} \left(\frac{\DP_{S^1_{\tau_k},S^2_{\tau_k}}^{\otimes 2}(S^1_{o_k-\tau_k}=y_1,S^2_{o_k-\tau_k}=y_2)}{\DP_{\mathbf x}^{\otimes 2}(S_{o_k}^1 = y_1,S_{o_k}^2 = y_2)}-1\right)\right].
\end{align*}
Define:
 \[V_k = \sup_{\substack{z\in B(0,M l_k^{1/2})\\z\sim_{(o_k-t)}y_1,y_2}} \sup_{t\in \mathcal T_k} \left| \frac{p_{o_k-t}(y_1-z)p_{o_k-t}(y_2-z)}{p_{o_k}(y_1-x_1) p_{o_k}(y_2-x_2)} - 1 \right|.\]
Since by definition $S^1_{\tau_k},S^2_{\tau_k}\in B(0,M l_k^{1/2})$ when $\tau_k<\infty$, we have $|p_{\mathbf x,\mathbf y}-p_{\mathbf x}| \leq p_{\mathbf x} V_k$. It is thus enough to prove that
\begin{equation} \label{eq:equationVk}
  V_k\leq C e^{|\varepsilon_N| + b_0+b_1} \left(|\varepsilon_N| + b_0 + b_1 + 12\nu_2^{-1}\right)=:\Delta_{\Gamma,\ref{lem:removeConditioning}},
\end{equation} 
where $\varepsilon_N = \varepsilon_N(\tilde \Gamma) \to 0$ as $N\to\infty$ and
\begin{equation} \label{eq:defb_0b_1}
 b_0 = 8 \nu_2^{-1} \delta^{-2},\quad b_1 = 4\left( \delta^{-1} M \nu_2^{-1/2} + M^2 \nu_2^{-1} +2 \delta^{-2} e^{-\frac{1}{2}\gamma \alpha/4 } \right),
\end{equation}
since then $\limsup_{\Gamma'}\Delta_{\Gamma,\ref{lem:removeConditioning}}=0$.
Similarly to the proof of Proposition \ref{prop:ak}, the argument leading to \eqref{eq:equationVk} relies on the local central limit theorem. In the following we assume that $z\in B(0,Ml_{k}^{1/2})$, $\mathbf x\in B_{2,k}$ and $\mathbf y \in B_{2,k+1}$.
We first note that $o_k-t \geq \nu_2 l_k$. By \eqref{eq:boundl_k}-(ii), it further holds that $|y_i-z| \leq (2\delta^{-1} \nu_2^{1/2} + M)l_k^{1/2}$ and $|y_i-x_i| \leq 4\delta^{-1}\nu_2^{1/2}l_k^{1/2}$. Hence $|y_i-z| \leq c_N(o_k-t)$ and $|y_i-x_i|\leq c_No_k$ with $c_N\to 0$, so Theorem \ref{AppendixLCLT} gives:
\begin{align*}
&p_{o_k-t}(y_i-z) = 2 \bar{p}_{o_k-t}(y_i-z) e^{O(d_k)},\\
&p_{o_k}(y_i-x_i) = 2 \bar{p}_{o_k}(y_i-x_i) e^{O(d_k)},
\end{align*}
  where $\bar p_{s}(x) = \frac{1}{\pi s} e^{-|x|^2/s}$ and $d_k = \frac{1}{\nu_2 l_k} + \frac{\delta^{-4} \nu_2^2 + M^4}{\nu_2 ^3l_k}\leq c N^{-\gamma}$ with $c=c(\delta,\nu_2,M)$. We now come back to $V_k$. Letting $\mathbf z = (z,z)$, we find that
\begin{align*}
\frac{\bar p_{o_k-t}(y_1-z) \bar p_{o_k-t}(y_2-z)}{\bar p_{o_k}(y_1-x_1) \bar p_{o_k}(y_2-x_2)} = \frac{o_k^2}{(o_k-t)^2}e^{-(|y_1|^2+|y_2|^2)(o_k^{-1}-(o_k-t)^{-1})+\frac{g(\mathbf z,\mathbf y)}{o_k-t} -2 \frac{g(\mathbf x,\mathbf y)}{o_k}},
\end{align*}
where $g(\mathbf x,\mathbf y) = 2\langle y_1,x_1\rangle + 2\langle y_2,x_2\rangle - |x_1|^2 - |x_2|^2$. Recall $b_0$ and $b_1$ in \eqref{eq:defb_0b_1}.
The first term in the exponential above is positive and smaller than
\[\left(|y_1|^2+|y_2|^2\right)\frac{t}{o_k(o_k-t)} \leq \frac{\delta^{-2} L_{k+1}(\nu_1 l_{k-1} + l_k)}{\nu_2^2 l_k^2}\leq b_0,\]
by \eqref{eq:boundl_k}-(ii) and Remark \ref{rem:2lk}.
The sum of the absolute values of the two other terms in the exponential is smaller than
\[(\nu_2 l_k)^{-1}\left(4 \delta^{-1} L_{k+1}^{1/2} M l_k^{1/2} + 2 M^2 l_k + 4\delta^{-2} L_{k+1}^{1/2} L_k^{1/2} + 2\delta^{-2} L_k\right) \leq b_1,\]
by the Cauchy-Schwarz inequality and \eqref{eq:boundl_k}(i),(ii).
Moreover,
\[\left|\frac{o_k^2}{(o_k-t)^2} -1\right| = \frac{t(2o_k-t)}{(o_k-t)^2}\leq \frac{(2l_k)(6\nu_2 l_k)}{(\nu_2 l_k)^2}\leq 12 \nu_2^{-1}.\]
Combining these estimates entails \eqref{eq:equationVk} using that $|e^x - 1|\leq |x|e^{|x|}$ for all $x\in\mathbb R$.
\end{proof}

Next, we show that we can neglect the condition $n< \sigma^1_k\wedge \sigma^2_k$ in the definition of $\tau_k= \tau_k^{1,2}$. Thus, we define:
\begin{equation}
\label{eq-tautildep}
\tilde \tau_k = \inf\left\{n\in \mathcal T_k| S_n^1=S_n^2\right\} \qquad \textrm{and} \qquad \tilde p_{\mathbf x}=\DP_{\mathbf x}^{\otimes 2}(\tilde \tau_k < \infty). 
\end{equation}
\begin{lemma} \label{lem:remove_barrier}
There exists $c>0$ such that 
\begin{equation} \label{eq:remplacerSansBarriere}
\sup_{k\leq K} \sup_{\mathbf x \in B_{2,k}} |p_{\mathbf x} - \tilde p_{\mathbf x}| \leq C\frac{e^{-c {M^2}}}{\gamma \log N}  .
\end{equation}
\end{lemma}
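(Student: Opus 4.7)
The plan is to reduce the problem to bounding the probability of a two-walk meeting event that occurs only after at least one walk has made a large excursion. First, since $\{\tau_k < \infty\} = \{\tilde \tau_k < \infty\} \cap \{\tilde \tau_k < \sigma^1_k \wedge \sigma^2_k\}$, we have the identity
\[
\tilde p_{\mathbf x} - p_{\mathbf x} = \DP_{\mathbf x}^{\otimes 2}\bigl(\tilde \tau_k < \infty,\ \sigma^1_k \wedge \sigma^2_k \leq \tilde \tau_k\bigr),
\]
which by symmetry between the two walks is bounded by $2\DP_{\mathbf x}^{\otimes 2}(\sigma^1_k \leq \tilde \tau_k < \infty)$.

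Next, the idea is to apply the strong Markov property at the stopping time $\sigma^1_k$, using that $\sigma^1_k$ is measurable with respect to $S^1$ and that $S^2$ is independent of $S^1$. At time $\sigma^1_k$, we have $|S^1_{\sigma^1_k}| > M l_k^{1/2}$, while $S^2_{\sigma^1_k}$ is a free random walk at time $\sigma^1_k \leq 2l_k$ started from $x_2 \in B_{2,k}$ (for which $|x_2| \ll M l_k^{1/2}$ under the assumption \eqref{eq:assumptionOnParameters}). I would split the resulting expectation according to the position of $S^2$:
\begin{itemize}
\item If $|S^2_{\sigma^1_k}| \leq (M/2) l_k^{1/2}$, then the two walks are separated by at least $(M/2) l_k^{1/2}$, and the probability that they meet in the remaining time $\leq l_k$ should be controlled by $C e^{-c M^2}/\log l_k$.
\item If $|S^2_{\sigma^1_k}| > (M/2) l_k^{1/2}$, then $S^2$ has made a large deviation and by Hoeffding's inequality the conditional probability (given $\sigma^1_k$) of this event is at most $C e^{-c M^2}$, which combined with $\DP(\sigma^1_k < \infty) \leq C e^{-c M^2}$ makes this branch lower order.
\end{itemize}
Combining and using $\log l_k \geq \gamma \log N$ (which follows from $l_k \geq N^{\gamma}$) then yields the claim.

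The main technical input, and the place where the bulk of the work lies, is the separated-walk meeting estimate in the typical case: for independent simple random walks $S^1, S^2$ starting at distance $r \geq (M/2) l_k^{1/2}$,
\[
\DP(\exists n \leq l_k:\ S^1_n = S^2_n) \leq \frac{C e^{-c M^2}}{\log l_k}.
\]
I would establish this by a two-step argument: the local CLT (Theorem \ref{th:LCLT}) gives $p_{2n}(z) \leq C n^{-1} e^{-c|z|^2/n}$, which upon summing over $n \leq l_k$ with $|z|^2 \geq M^2 l_k/4$ bounds the expected number of meetings by $C e^{-c M^2}/M^2$; then the Markov property at the first meeting time converts this into a hitting probability, using that from a coincidence point the expected number of returns in the residual time is of order $\log l_k$ (by the standard 2D Green's function asymptotic $R_n \sim \pi^{-1} \log n$).

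The main obstacle is the atypical branch: the crude bound $C e^{-2c M^2}$ that one gets without further work is not of the correct form $C e^{-c' M^2}/\log N$ uniformly in $N$, so some extra care (either sub-splitting by the configuration of $(S^1_{\sigma^1_k},S^2_{\sigma^1_k})$ to recover a separation of order $M l_k^{1/2}$ on a large fraction of directions, or invoking a preliminary bound $\tilde p_{\mathbf x} = O(1/\log l_k)$ and multiplying by the conditional excursion probability) is needed to ensure the final bound carries the $1/(\gamma \log N)$ factor. This is the step where the full argument must be executed most carefully.
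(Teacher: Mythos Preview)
Your reduction and overall strategy match the paper's: write $\tilde p_{\mathbf x}-p_{\mathbf x}=\DP_{\mathbf x}^{\otimes 2}(\tilde\tau_k<\infty,\ \sigma^1_k\wedge\sigma^2_k\le\tilde\tau_k)$, bound by twice the $i=1$ term, and apply the strong Markov property at $\sigma^1_k$. Your ``typical'' branch is also fine; the bound $\DP_z(T_0\le l_k)\le Ce^{-cM^2}/\log l_k$ for $|z|\ge (M/2)l_k^{1/2}$ can indeed be obtained by first letting the difference walk enter $B(0,l_k^{1/2})$ (cost $\le Ce^{-cM^2}$) and then applying Le Gall's bound $h_k(w)\le C/\log l_k$ for $|w|\sim l_k^{1/2}$.

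The genuine gap is exactly the one you flag in the atypical branch. In the order of limits used here ($N\to\infty$ with $M$ fixed, then $M\to\infty$), the estimate $Ce^{-2cM^2}$ is \emph{not} dominated by $Ce^{-cM^2}/(\gamma\log N)$, so you cannot simply absorb it. Your fix (b) does not work as stated: the meeting event and the excursion event are not independent, so you cannot just multiply $\tilde p_{\mathbf x}=O(1/\log l_k)$ by an excursion probability. Your fix (a) is the correct direction, and this is precisely what the paper does, but with a cleaner split than yours.

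Namely, after Markov at $\sigma^1_k=m$, the paper decomposes by the \emph{separation} $|S^1_m-S^2_m|$ rather than by $|S^2_m|$: write the post-Markov expectation as $\sum_{x,y}\DP_{x_1}(\sigma^1_k=m,S^1_m=x)\,p_m(y-x_2)\,h_k(x-y)$ and split into $Q_1=\{|x-y|^2\ge l_k\}$ and $Q_2=\{|x-y|^2<l_k\}$. On $Q_1$, Le Gall gives $h_k(x-y)\le C/\log l_k$, and integrating in $(x,y)$ leaves only $\DP_{x_1}(\sigma^1_k\in\mathcal T_k)\le Ce^{-cM^2}$. On $Q_2$, since $|x|\ge Ml_k^{1/2}$ and $|x_2|\le (M/2)l_k^{1/2}$, the constraint $|x-y|<l_k^{1/2}$ forces $|y-x_2|\ge (M/4)l_k^{1/2}$, so the local CLT gives $p_m(y-x_2)\le Cm^{-1}e^{-cM^2 l_k/m}$; summing $h_k(x-y)\le C(\log l_k)^{-1}\log(l_k/|x-y|^2)$ over $|x-y|<l_k^{1/2}$ then produces the factor $Ce^{-cM^2}/\log l_k$ as well. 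In both pieces the $1/\log l_k$ comes from Le Gall's $h_k$-estimate, and the $e^{-cM^2}$ comes either from the excursion of $S^1$ or from the Gaussian tail of $S^2$'s transition kernel. This resolves the issue you raised without any residual $e^{-2cM^2}$ term.
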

\begin{proof}
We have:
\begin{align*}
p_{\mathbf x} &= \DP_{\mathbf x}^{\otimes 2}(\tilde \tau_k < \infty,\tilde \tau_k < \sigma^1_k\wedge \sigma^2_k) =  \DP_{\mathbf x}^{\otimes 2}(\tilde \tau_k < \infty) - \DP_{\mathbf x}^{\otimes 2}(\tilde \tau_k < \infty,\sigma^1_k \wedge \sigma^2_k \leq \tilde \tau_k),
\end{align*}
hence, by the union bound,
\[|p_{\mathbf x}-\tilde p_{\mathbf x}|\leq \sum_{i=1,2} \DP_{\mathbf x}^{\otimes 2}(\tilde \tau_k < \infty,\sigma^i_k \leq \tilde \tau_k).\]
We will bound from above the term corresponding to $i=1$ in the sum. The other term is treated the same way. Since $\mathcal T_k = \llbracket \nu_1 l_{k-1},\nu_1 l_{k-1} + l_k \rrbracket$,
\[\DP_{\mathbf x}^{\otimes 2}(\tilde \tau_k < \infty,\sigma^1_k \leq \tilde \tau_k) \leq  \DP_{\mathbf x}^{\otimes 2}\left(\sigma^1_k \leq \tilde \tau_k,\exists n \in \llbracket \sigma^1_k,\sigma^1_k+ l_{k} \rrbracket : S_n^1=S_n^2\right),\]
hence by Markov's property, 
\begin{align}
\DP_{\mathbf x}^{\otimes 2}(\tilde \tau_k < \infty,\sigma^1_k \leq \tilde \tau_k) \nonumber & \leq \sum_{m\in \mathcal T_k} \DE_{\mathbf x}^{\otimes 2}\left[ \mathbf{1}_{\sigma^1_k = m} \DP_{S^1_m,S^2_m}^{\otimes 2}\left(\exists n \leq l_k : S_n^1=S_n^2\right)\right] \nonumber\\
& = \sum_{m\in \mathcal T_k} \sum_{x,y\in \mathbb Z^2} \DE_{x_1}\left[ \mathbf{1}_{\sigma^1_k = m} \mathbf{1}_{S_m^1=x}\right] p_{m}(y-x_2) h_k(x-y), \label{eq:sumToSplit}
\end{align}
where $h_k(z) = \DP_{z}(\exists n\leq l_k:S_n = 0)$. 
It {follows} from \cite[Th\'{e}or\`{e}me 3.6]{LeGall86} that 
\[(\log l_k)h_k(z) \leq C \left(\log \left\{{l_k} |z|^{-2}\right\}\right)_+ + C \mathbf{1}_{|z|^2\geq l_k}.\]
We thus split the sum that appears in \eqref{eq:sumToSplit} into $Q_1 + Q_2$, where $Q_1$ contains the terms for which $|x-y|^2\geq l_k$. Then
 $Q_1\leq C(\log l_k)^{-1} \DP_{x_1}\left(\sigma_k^1 \in \mathcal T_k\right)$, where by \eqref{eq:boundl_k} and \eqref{eq:assumptionOnParameters}-(iv), we have
\begin{equation}\label{eq:boundx_1M}
|x_1| \leq \delta^{-1} L_k^{1/2} \leq 2\nu_2^{1/2} e^{-\frac{1}{2}\gamma \alpha/2} l_{k}^{1/2}\leq  \frac{M}{2} l_k^{1/2},
\end{equation}
so that,
\begin{align*}
Q_1 \leq \frac{C}{\log l_k} \DP_{0}\left(\sup_{n\in \mathcal T_k} |S_n| \geq M l_k^{1/2} - |x_1| \right) & \leq \frac{C}{\log l_k} e^{-c \frac{\left(M l_k^{1/2} - |x_1|\right)^2}{ \nu_1 l_{k-1} + l_k}} \leq C \frac{ e^{-c\frac{M^2}{8}}}{\log l_k},
\end{align*}
for some $c>0$, by Doob's inequality and Hoeffding's lemma. (Note that for the last inequality, we have used Remark \ref{rem:2lk}). Then, 
\begin{equation} \label{eq:defQ2}
Q_2\leq \frac{C}{\log l_k}\sum_{m\in \mathcal T_k} \sum_{x\in \mathbb Z^2} \DE_{x_1}\left[ \mathbf{1}_{\sigma^1_k = m}   \mathbf{1}_{S_m^1=x}\right] A_m(x),
\end{equation}
with
\begin{align*}
A_m(x) = \sum_{\substack{y\in \mathbb Z^2, y\neq x,\\|x-y|^2< l_k}} p_{m}(y-x_2) \log \frac{l_k}{|x-y|^2}.
\end{align*}
Since $m=\sigma_k^1$ implies that $S^1_m$ lies outside the ball $B(0,Ml_{k}^{1/2})$, we can restrict the sum in \eqref{eq:defQ2} to $|x|> Ml_{k}^{1/2}$. 
Then, as $|x_2|$ satisfies the same bound as $|x_1|$ in \eqref{eq:boundx_1M}, we get that $|x-x_2|\geq \frac{M}{2} l_{k}^{1/2}$, which implies that $|y-x_2| \geq \frac{M}{4} l_k^{1/2}$ under the condition $|x-y|^2 < l_k$.  Thus, given that 
$m\geq \nu_1 l_{k-1}$, we can apply the local limit theorem (Theorem \ref{th:LCLT})  to obtain that:
\[A_m(x) \leq \frac{C}{m} \sum_{\substack{z\in \mathbb Z^2,  \\ 0<|z|^2< l_k}} e^{-\frac{M^2}{16}\frac{l_k}{m}}  \log \frac{l_k}{|z|^2},\]
 and hence
\[A_m(x) \leq \frac{C}{m} e^{-\frac{M^2}{16} \frac{l_k}{m}} \sum_{r=1}^{\lfloor l_k^{1/2}\rfloor} r \log \frac{l_k}{r^2} \leq C\frac{l_k}{m}e^{-\frac{M^2}{16}\frac{l_k}{m}} \leq C e^{-\frac{M^2}{32} \frac{l_k}{m}},\]
where in the second inequality, we have used a comparison to an integral where  $C\int_{0}^1 u\log u^{-2} \dd u < \infty$. 
Using that in the last exponential term we have $m\leq 2 l_{k}$, we get via \eqref{eq:defQ2} that $Q_2 \leq \frac{C}{\log l_k} e^{-\frac{M^2}{64}}$.
This gives \eqref{eq:remplacerSansBarriere} since $l_k \geq N^{\gamma}$.
%
\end{proof}

We introduce the shorthand notation
$g_k(x) = \DP_{x}(\exists n \in \mathcal T_k : S_n = 0)$ that satisfies
\begin{equation} \label{eq:identityptog}
\tilde p_{\mathbf x} = g_k(x_1-x_2),
\end{equation} 
{recall \eqref{eq-tautildep}.}
Our aim is to use the KMT coupling (see \cite{Zaitsev98} and references therein) to estimate $g_k(x)$. The KMT coupling ensures that one can couple, {with high probability,} the simple random walk $(S_n)$ to a standard $2$-dimensional Brownian motion $(B_t)$  with an error term $\Delta_n = \max_{t\leq n} | S_t- \sigma B_t|$ (with $\sigma^2 =1/2$) satisfying $\Delta_n = O(\log n)$. 
We will use the coupling to compare the hitting time of $0$ of the random walk to the entry time of Brownian motion in a ball of radius $c\log N$. This will turn out helpful as there are good estimates by Spitzer \cite{Spitzer58} on the probability of the last event.\footnote{There exist similar estimates for the random walk itself, such as \cite{RR66}, but unfortunately they are not sharp enough to estimate $g_k(x)$ directly in our context.} 
  
Let $t_1 = \nu_1 l_{k-1}$ and $t_2 = t_1 + l_k$ denote the boundaries of $\mathcal T_k$ and $t_2' = t_1 + l_k/2 $. We define:
\begin{equation} \label{eq:defG_k}
  G_{k,c_0} = \left\{\inf_{t\in [t_1,t_2]} |B_t| \leq c_0\log N\right\} \text{ and } G_{k,c_0}' = \left\{\inf_{t\in [t_1,t_2']} |B_t| \leq c_0\log N\right\}.
\end{equation}
\begin{lemma}\label{lem:KMT}
 There exists $c_0,c_1,c_1',c_2,c_\gamma>0$ such that for all $x\in B\left(0,2\delta^{-1} L_k^{1/2}\right)$ and all $k\leq K$,
\begin{equation} \label{eq:upperBoundptilde}
  \left(1-c_{\gamma}\frac{\log \log N}{\log N}\right)  \left(\DP_{x}(G_{k,c_0}')- c_2N^{-c_1'}\right)\leq g_k(x) \leq \DP_{x}(G_{k,c_0}) + N^{-c_1}.
\end{equation}
\end{lemma}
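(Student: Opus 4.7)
The plan is to use the KMT coupling to reduce the problem of the random walk hitting $0$ during $\mathcal{T}_k$ to the problem of a Brownian motion entering a ball of radius $c_0\log N$, and then handle a small-scale correction by classical hitting-time estimates for the planar simple random walk. Throughout, fix a KMT coupling of $S$ and a standard planar Brownian motion $B$ (with $\sigma^2=1/2$) and let $\Delta = \max_{n\leq N}|S_n-\sigma B_n|$. By \cite{Zaitsev98}, there exist universal constants $C,c_1>0$ such that the event $E=\{\Delta\leq C\log N\}$ satisfies $\DP(E^c)\leq N^{-c_1}$. Pick $c_0 = 2C/\sigma$.

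\textbf{Upper bound.} If $S_n=0$ for some $n\in\mathcal{T}_k$, then on $E$ we have $|B_n|\leq C\sigma^{-1}\log N\leq c_0\log N$, so $\inf_{t\in[t_1,t_2]}|B_t|\leq c_0\log N$, i.e.\ $G_{k,c_0}$ holds. A union bound with $\DP(E^c)\leq N^{-c_1}$ gives $g_k(x)\leq \DP_x(G_{k,c_0})+N^{-c_1}$.

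\textbf{Lower bound.} On $G'_{k,c_0}$, let $\tau = \inf\{t\in[t_1,t_2']:|B_t|\leq c_0\log N\}$. On the intersection with $E$ (with a slight enlargement of $c_0$ to absorb the discretization error $|B_{\lceil \tau\rceil}-B_\tau|$, which is $O(1)$ with probability $1-N^{-c_1'}$ by modulus-of-continuity estimates, enlarging the failure set by at most $c_2N^{-c_1'}$), we get an integer time $m\in[t_1,t_2'+1]\subset[t_1,t_1+l_k/2+1]$ with $|S_m|\leq (\sigma c_0+C+O(1))\log N =: \rho_N\log N$. Then, applying the strong Markov property at $m$, the remaining time in $\mathcal{T}_k$ is at least $l_k/2-1$, and we reduce to lower-bounding $\DP_y(\tau_0\leq l_k/2-1)$ uniformly over $|y|\leq \rho_N\log N$. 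The classical truncated-Green's-function estimate for the planar simple random walk (see e.g.\ \cite{LeGall86} or the potential-kernel bound of Spitzer) yields
\[
\inf_{|y|\leq \rho_N\log N}\DP_y(\tau_0\leq l_k/2-1)\geq 1 - c_\gamma\frac{\log\log N}{\log N},
\]
because $\log l_k\geq \gamma\log N$ and $\log|y|\leq \log\log N+O(1)$. Combining with the KMT failure probability and the discretization failure gives the claimed lower bound.

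\textbf{Main obstacle.} The principal difficulty is step 3 combined with the discretization: one must cleanly pass from the continuous-time hitting statement about $B$ to a discrete-time statement about $S$ and then invoke a \emph{sharp} quantitative hitting estimate for the 2D SRW that produces exactly the $\log\log N/\log N$ correction (not a cruder $|y|^{-\alpha}$ or $(\log n)^{-1}$ bound). This is where the local structure near $0$ (the potential kernel $a(y)\sim \frac{1}{\pi}\log|y|$) enters; a direct input from \cite{LeGall86} should suffice, but some care is needed to keep the estimate uniform over $y$ with $|y|\leq \rho_N\log N$ and over $k\leq K$. The other two steps (KMT application and the upper bound) are straightforward.
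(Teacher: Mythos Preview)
Your proof is correct and takes essentially the same approach as the paper: KMT for the upper bound, and for the lower bound KMT plus a modulus-of-continuity step to pass from $G'_{k,c_0}$ to a near-origin position of $S$, followed by the planar SRW hitting estimate producing the $\log\log N/\log N$ correction. The only cosmetic differences are that the paper works with the integer-time entry time $T=\min\{n\in[t_1,t_2']\cap\mathbb N:|B_n|\le 2c_0\log N\}$ (doubled radius) rather than rounding a continuous $\tau$, and cites \cite{RR66} for the hitting bound where you invoke \cite{LeGall86}/Spitzer.
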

\begin{proof}
  Let us first set the values of $c_0$ and $c_1$. 
  By \cite[Theorem 1.3]{Zaitsev98} and Markov’s property, we can,  for all $x\in \mathbb Z^2$,
 find a coupling $((S_n),(B_t),\DP_x$) and a $c_0$ large enough independent of $x$,  such that  $\DP_x(\Delta_N > c_0\log N) \leq N^{-c_1}$ with $c_1>0$ independent of $x$. We choose $c_0,c_1$ as such.
 
  We start with the upper bound in \eqref{eq:upperBoundptilde}.
With $F_k = \{\exists n\in \mathcal T_k|S_n = 0\}$,
\begin{align*}
g_k(x) &= \DP_{x} (F_k, \Delta_N \leq c_0\log N) + \DP_{x} (F_k, \Delta_N > c_0\log N)\\
& \leq \DP_{x}(G_{k,c_0}) + N^{-c_1}.
\end{align*}

We continue with the lower bound. In this case, unfortunately, knowing that the Brownian motion enters the ball of radius $c\log T$ does not imply necessarily that the random walk hits the origin. However, the random walk will be close enough to the origin so that its probability to hit the origin soon after is high. Denote by $T = \min\{n\in [t_1,t_2']\cap \mathbb N, |B_n|\leq 2c_0\log N\}$ and $T_0 = \inf\{n\geq 1|S_n = 0\}$. Let also $\theta_{k}$ stand for the shift in time of $k$ steps for the random walk. Since $t_2' = t_1+ l_k/2$, we have that on the event $T<\infty$,  $\{T_0\circ\theta_{T} < l_k/2\}\subset F_k$. Hence,
\begin{align*}
g_k(x) & \geq \DP_x\left(T<\infty, T_0\circ\theta_{T} < l_k/2,\Delta_T \leq c_0\log N\right)\\
& = \DE_x\left[\mathbf{1}_{T<\infty} \mathbf{1}_{\Delta_T \leq c_0\log N} \DP_{S_T}\left(T_0 < l_k/2\right)\right],
\end{align*}
where we have used Markov's property.  Since on $\{T<\infty, \Delta_T \leq c_0\log N\}$ we have $|S_T| \leq |B_T|+c_0\log N\leq 3c_0\log N$, we obtain that
\begin{align*}
g_k(x)\geq \inf_{|y|\leq 3c\log N} \DP_{y}\left(T_0 < l_k/2\right)  \DP_x\left(T<\infty, \Delta_T \leq c_0\log N \right).
\end{align*}
Observe that on $\{T<\infty\}$, one has $T\leq N$ and thus $\Delta_T\leq \Delta_N$, so that
\[
\DP_x\left(T<\infty, \Delta_T \leq c_0\log N \right) \geq \DP_x(T<\infty) - N^{-c_1}.
\]
The lower bound is thus proven if we show that
\begin{equation} \label{eq:continuityOfBM}
  \DP_x(T<\infty) \geq \DP_x(G_{k,c_0}')-N^{-c'},
\end{equation} for some $c'>0$, and that
\begin{equation} \label{eq:loglogN}
\inf_{|y|\leq 2c\log N} \DP_{y}\left(T_0 < l_k/2\right) \geq 1-c_{\gamma} \log \log N /\log N.
\end{equation}
For \eqref{eq:continuityOfBM}, we let $\Omega_N = \sup \{ |B_s-B_t|, |t-s|\leq 1, s,t\leq N\}$ and decompose
\begin{align*}
  \DP_x(G_{k,c_0}') & = \DP_x(G_{k,c_0}', \Omega_N< c_0\log N) + \DP_x(G_{k,c_0}', \Omega_N > c_0\log N)\\
  & \leq \DP_x(T<\infty) + \DP_x(\Omega_N > c_0\log N),
\end{align*}
so that \eqref{eq:continuityOfBM} follows from the fact that $\DP_x(\Omega_N>c_0\log N) \leq N^{-c'}$ for $c'>0$, see \cite[Theorem 3.2.4]{lawlerLimicRW}. We now prove \eqref{eq:loglogN}. 
We first use that since $l_k\geq N^{\gamma}$, 
we have $\DP_{y}(T_0 \geq l_k/2) \leq  \DP_{y}(T_0 \geq N^\gamma/2)$. Then, by \cite[Theorem 1]{RR66},
the last probability is smaller than $c_\gamma \log (2c_0 \log N)/\log N$ uniformly for $|y|\leq 2c_0\log N$.
\end{proof}

\begin{lemma} \label{lem:G_k}  Let $c_0$ be as in Lemma \ref{lem:KMT}. There exists $N_0=N_0(\tilde \Gamma)$ and $\Delta_{\Gamma,\ref{lem:G_k}} > 0$ such that 
$\Delta_{\Gamma,\ref{lem:G_k}} < 1$ for all $N>N_0$, $\limsup_{{\Gamma'}} \Delta_{\Gamma,\ref{lem:G_k}} = 0$ and for all $k\leq K$,
  \begin{align} \label{eq:controleBrownien1}
  \bar \alpha \left(1 - \Delta_{\Gamma,\ref{lem:G_k}}\right) &  \leq \inf_{x\in B(0,2\delta^{-1} L_k^{1/2})}\DP_x(G_{k,c_0}')\\
  & \leq \sup_{x\in B(0,2\delta^{-1} L_k^{1/2})} \DP_x(G_{k,c_0}) \leq  \bar \alpha (1+\Delta_{\Gamma,\ref{lem:G_k}}).\label{eq:controleBrownien2}
  \end{align}
\end{lemma}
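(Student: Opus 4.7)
The plan is to reduce the computation of $\DP_x(G_{k,c_0})$ and $\DP_x(G'_{k,c_0})$ to the hitting probability of a small ball $\overline{B(0,r)}$, $r=c_0\log N$, by a 2D Brownian motion, and then to apply the classical 2D hitting asymptotic of Spitzer \cite{Spitzer58}. By the Markov property of $(B_t)$ at time $t_1$, with $T_r=\inf\{t\ge0:|B_t|\le r\}$,
\[
  \DP_x(G_{k,c_0}) \;=\; \DE_x\!\left[\DP_{B_{t_1}}(T_r\le l_k)\right],\qquad
  \DP_x(G'_{k,c_0}) \;=\; \DE_x\!\left[\DP_{B_{t_1}}(T_r\le l_k/2)\right].
\]
The two bounds in the lemma are then handled by the same type of calculation with $T\in\{l_k,l_k/2\}$; the difference coming from halving the time window contributes only a $\log 2/\log l_k=O(1/(\gamma\log N))$ error, which is absorbed into $\Delta_{\Gamma,\ref{lem:G_k}}$.

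The first main input is a quantitative form of Spitzer's asymptotic for the 2D Brownian hitting time: for $r<|y|$ with $|y|^2/T\to 0$,
\[
  \DP_y(T_r\le T)\;=\;1-\frac{\log(|y|^2/r^2)}{\log(T/r^2)}\bigl(1+o(1)\bigr),
\]
with an $o(1)$ that is uniform in the regime $|y|^2/T\le N^{-\gamma/2}$ and $r^2/T\le N^{-\gamma/2}$. This can be derived via a Tauberian argument applied to the explicit Laplace transform $\DE_y[e^{-\lambda T_r}]=K_0(|y|\sqrt{2\lambda})/K_0(r\sqrt{2\lambda})$ together with the expansion $K_0(u)=-\log(u/2)-\gamma_{E}+O(u^2)$, which yields error bounds of size $O(1/\log(T/r^2))$ in the regime of interest. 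Note that both $\log l_k\ge\gamma\log N$ and $\log(T/r^2)\ge \gamma\log N - 2\log\log N$, so the error is uniformly small in $k\le K$ once $N\to\infty$.

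The second input is the scale of $|B_{t_1}|$ under $\DP_x$. Since $B_{t_1}\sim\mathcal N(x,t_1 I_2)$, the variable $|B_{t_1}|^2$ concentrates around $m^2:=t_1+|x|^2$. Using $t_1=\nu_1 l_{k-1}$ and $|x|^2\le 4\delta^{-2}L_k\le 16\delta^{-2}\nu_2 l_{k-1}$ (by \eqref{eq:boundl_k}-(ii)), we get $m^2\asymp_{\nu_1,\nu_2,\delta} l_{k-1}$ and in particular
\[
  \log m^2 \;=\; \log l_{k-1}+O_{\widetilde\Gamma}(1).
\]
Gaussian concentration gives $|\log|B_{t_1}|^2-\log m^2|=O_{\widetilde\Gamma}(1)$ with $\DP_x$-probability $1-o(1)$; the complementary event $\{|B_{t_1}|\le r\}\cup\{|B_{t_1}|\ge e^{(\log N)^2}m\}$ contributes at most $C(\log N)^2/t_1+e^{-(\log N)^3}\le CN^{-\gamma}(\log N)^2$ to the expectation after bounding the hitting probability trivially by $1$.

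Combining these, for $T\in\{l_k,l_k/2\}$,
\[
  \DE_x\!\left[\DP_{B_{t_1}}(T_r\le T)\right]\;=\;1-\frac{\log l_{k-1}-2\log(c_0\log N)+O_{\widetilde\Gamma}(1)}{\log T-2\log(c_0\log N)}\bigl(1+o(1)\bigr).
\]
Using $\log l_k=\gamma f_k\log N$, $\log l_{k-1}=\gamma f_{k-1}\log N$ and $\log\log N/\log l_k=O(1/(\gamma\log N))$, the ratio above collapses to $f_{k-1}/f_k=e^{-\bar\alpha}=1-\bar\alpha+O(\bar\alpha^2)$, whence
\[
  \DE_x\!\left[\DP_{B_{t_1}}(T_r\le T)\right]\;=\;\bar\alpha\bigl(1+o(1)\bigr),
\]
uniformly in $k\le K$ and $x\in B(0,2\delta^{-1}L_k^{1/2})$. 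Since the order of limits in $\Gamma'$ sends $N\to\infty$ first, all the $O_{\widetilde\Gamma}(1)/\log l_k$ and $O(\bar\alpha^2)$ contributions vanish, and $\Delta_{\Gamma,\ref{lem:G_k}}\to 0$ under $\limsup_{\Gamma'}$, with $\Delta_{\Gamma,\ref{lem:G_k}}<1$ once $N>N_0(\widetilde\Gamma)$.

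The principal obstacle is to obtain a sufficiently quantitative version of Spitzer's asymptotic. Standard statements give only an $o(1)$ error; to carry this through the integration over $B_{t_1}$ and to achieve uniformity in $k$, I will either use the $K_0$-Laplace-transform identity with explicit expansions around $\lambda\to 0$, or replace the 2D BM by the representation $R_t=\rho\exp(W_{\sigma(t)})$ of its radial part as an exponential functional of a 1D Brownian motion, which reduces the hitting-time problem to a first-passage problem for 1D BM with an explicit time change. Either route yields the uniformity needed to match the $\limsup_{\Gamma'}$ bookkeeping.
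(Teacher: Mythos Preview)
Your Markov-at-$t_1$ route is a reasonable alternative to the paper's argument, but there is a genuine gap in the control of the Spitzer error. You assert that the asymptotic
\[
  \DP_y(T_r\le T)=1-\frac{\log(|y|^2/r^2)}{\log(T/r^2)}\bigl(1+o(1)\bigr)
\]
holds with an $o(1)$ that is uniform in the regime $|y|^2/T\le N^{-\gamma/2}$, and then apply it with $y=B_{t_1}$ and $T=l_k$. But $|B_{t_1}|^2$ is of order $t_1=\nu_1 l_{k-1}$, so $|y|^2/T\asymp l_{k-1}/l_k$, which by \eqref{eq:boundl_k}-(i) lies in $[e^{-e\alpha},e^{-\gamma\alpha/2}]$. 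This ratio is bounded away from zero and does \emph{not} tend to $0$ as $N\to\infty$; your claimed bound $|y|^2/T\le N^{-\gamma/2}$ is simply false. The Tauberian/Laplace-transform derivation you sketch (and which is carried out in the paper as Lemma~\ref{lem:Ridler-Rowe}) gives an $o(1)$ that vanishes only as $|y|^2/T\to 0$, not as $r^2/T\to 0$ alone; consequently the Spitzer error in your scheme is a function of $\gamma\alpha$, not of $N$. Your approach can still be pushed through, since $\alpha\to\infty$ in the $\Gamma'$ limit and hence $l_{k-1}/l_k\to 0$, but this requires you to prove the uniform-in-$a$ version of Spitzer (essentially Lemma~\ref{lem:Ridler-Rowe}) and to track that the resulting $\Delta$ depends on $\alpha$ rather than $N$---something your write-up does not do.

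The paper sidesteps this entirely. For the upper bound it uses the Bessel coupling $\DP_x(G_{k,c_0})\le \DP_0(G_{k,c_0})$ and then Brownian scaling to reduce to $\DP_0(\inf_{s\in[1,t_2/t_1]}|B_s|\le r_1)$ with $r_1=c_0\log N/\sqrt{t_1}\to 0$; Spitzer's \emph{original} result (starting from $0$) then applies, and uniformity over $k$ comes for free from Dini-type compactness since $t_2/t_1\le 1+e^{e\alpha}$ is bounded. For the lower bound the paper uses a pointwise Gaussian-density comparison $\bar p_1(z-x/\sqrt{t_1})\ge (1+\xi)^{-1}\bar p_{1/(1+\xi)}(z)e^{-\xi-\xi^2}$ with $\xi^2=4\delta^{-2}L_k/t_1$, which again reduces to a $\DP_0$ probability with slightly shifted times $u_1,u_2$ and the same compactness argument. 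The upshot is that the paper never needs a quantitative Spitzer bound from a generic starting point, and its error $\varepsilon_N(\alpha,\gamma,\nu_1)$ genuinely vanishes as $N\to\infty$ with the other parameters fixed---a cleaner match to the $\Gamma'$ bookkeeping than what your route provides.
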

\begin{proof} Let $x\in B(0,2\delta^{-1} L_k^{1/2})$. 
We begin with the second inequality (upper bound)  in \eqref{eq:controleBrownien2}. (The first inequality in \eqref{eq:controleBrownien2} is immediate). With $r_1 = c_0\frac{\log N}{\sqrt{t_1}}$, we have 
\begin{equation} \label{eq:BrowScaling}
  \DP_x(G_{k,c_0}) \leq \DP_0(G_{k,c_0}) = \DP_0\left(\inf_{s\in [1, t_2/t_1]} |B_s| \leq r_1\right),
\end{equation}
{where the first inequality follows from the fact that the modulus of the Brownian motion is a Bessel process and one can couple a Bessel process  $X_t^x$ started at $x$ to $B_t^0$
so that $B_t^x\geq B_t^0$ for all $t$, and the equality follows from Brownian scaling.}
In \cite{Spitzer58}, it is shown that
\[h_r(t) = \left(\log r^{-2}\right) \DP_0\left(\inf_{s\in [1,t]} |B_s| \leq r\right),\]
satisfies $h_r(t) \to \log t$ as $r\to 0$ for all fixed $t\geq 1$. 
Since $t\to h_r(t)$ is increasing and $t\to \log t$ is continuous, this convergence can be extended to a uniform convergence on each compact subset of $[1,\infty)$. By \eqref{eq:boundl_k} we have
\[t_2/t_1 = (\nu_1 l_{k-1}+l_k)/(\nu_1 l_{k-1}) \leq 1+e^{e\alpha},\] hence by the equality in \eqref{eq:BrowScaling},
\begin{equation} \label{eq:asym_P0}
\left| \left(\log r_1^{-2}\right) \DP_0(G_{k,c_0}) - \log t_2/t_1 \right| \leq \sup_{t\in [1,1+e^{e\alpha}]}|h_{r_1}(t)-\log t| =: \varepsilon_{N},
\end{equation}
where $\varepsilon_{N} = \varepsilon_N(\alpha,\gamma,\nu_1) \to 0$ as $N\to\infty$ since $r_1$ vanishes as $N\to\infty$. Moreover, by \eqref{eq:ratioOnl_k}, there exists $\varepsilon_N'=\varepsilon_N'(\alpha,\gamma) \to 0$ as $N\to\infty$ such that $\varepsilon_N' >0$ and
\begin{align*}
\log t_2/t_1&  = \log(l_k/(\nu_1 l_{k-1})) + \log(1+\nu_1l_{k-1}/l_k) \leq (e^{\bar \alpha} - 1) \log l_{k-1} + \log 2 + \varepsilon_N',
\end{align*}
where 
we have used that $\nu_1 l_{k-1}/l_k \leq 1$ (Remark \ref{rem:2lk}).
Hence, by \eqref{eq:asym_P0}, we find that
\begin{align*}
\DP_0(G_{k,c_0}) &\leq \frac{(e^{\bar \alpha}-1)\log l_{k-1} +\log 2 +\varepsilon_N+\varepsilon_N'}{\log l_{k-1} + \log (\nu_1/(c_0^2(\log N)^2))}\leq (e^{\bar \alpha}-1)\frac{\left(1+ \frac{\log 2 +\varepsilon_N+\varepsilon_N'}{(e^{\bar \alpha} - 1)\log l_{k-1}}\right)}{1 - 2 \frac{\log (c_0\log N)}{\log l_{k-1}}}.
\end{align*}
Since $\log l_{k-1} \geq \gamma\log N$, the numerator is smaller than  $1 + \frac{\log 2 + \varepsilon_N+\varepsilon_N'}{\gamma \alpha}$ and
for $\alpha$, $\gamma$ and $\nu_1$ fixed, the denominator writes as $(1+o_N(1))$.
This gives \eqref{eq:controleBrownien2}.

We turn to \eqref{eq:controleBrownien1}. For $r_1 = c_0 \log N / \sqrt {t_1}$, by {Brownian scaling and Markov's property,}
\begin{align}
\DP_x(G_{k,c_0}') & = \DP_{x/\sqrt {t_1}}\left(\inf_{s\in [1, t_2'/t_1]} |B_s| \leq r_1\right)\nonumber\\
& = \int_{\mathbb R^2} \bar p_1(z-x/\sqrt t_1) \DP_z\left(\inf_{s\in [0,t_2'/t_1 - 1]} |B_s| \leq r_1\right)\dd z,\label{eq-integral}
\end{align}
where $\bar p_{t}(x) = \frac{1}{\pi t} e^{-|x|^2/t}$.
Then, we have:
\[\bar p_1(z-w) \geq \bar p_1(z) e^{-2|w||z| -|w|^2} \geq \bar p_1(z) e^{-|w|(|z|^2+1) -|w|^2},
\]
 by the Cauchy-Schwarz inequality and the bound $|z| \leq (|z|^2 + 1)/2$ valid for all $z$. This implies that for all $x\in B(0,2\delta^{-1} L_k^{1/2})$, with $\xi^2 = 4\delta^{-2}  L_k / t_1$,
\begin{align*}
\bar p_1(z-x/\sqrt t_1) &\geq \bar p_1(z) e^{- \xi |z|^2} e^{-\xi^2-\xi}  
= (1+\xi)^{-1} \bar p_{\frac{1}{1+\xi}}(z)  e^{-\xi^2-\xi}  .
\end{align*}
Plugging the last inequality in the integral {in \eqref{eq-integral},} we obtain that for $u_1 = (1+\xi)^{-1}$ and $u_2 = t_2'/t_1 - \xi/(1+\xi)$,
\begin{equation} \label{eq:lowerbound_gk'}
\DP_x(G_{k,c_0}')
 \geq  \frac{e^{-\xi^2-\xi}}{1+\xi} \DP_0\left(\inf_{s\in [u_1,u_2]} |B_s| \leq r_1\right).
\end{equation} 
By \eqref{eq:boundl_k}-(ii), we see that $\xi^2\leq  16 \nu_1^{-1} \delta^{-2}\nu_2$, in particular $\xi^2\leq 1/2$ by \eqref{eq:assumptionOnParameters}-(iii). Similarly to \eqref{eq:asym_P0}, we obtain that
\[
\left | \log  (r_1/u_1) \DP_0\left(\inf_{s\in [u_1,u_2]} |B_s| \leq r_1\right) - \log (u_2/u_1) \right| \leq \varepsilon_{N},
\]
where $\varepsilon_{N} = \varepsilon_N(\alpha,\gamma,\nu_1,\nu_2,\delta) \to 0$ as $N\to\infty$.
As $u_2/u_1 = (1+\xi) t_2'/t_1- \xi$,
\begin{align*}
\log u_2/u_1 & = \log t_2'/t_1 + \log\left(1+ \xi - \xi t_1/t_2'\right)\\
& \geq \log(l_k/(2\nu_1 l_{k-1})) + \log(1-\xi)\\
& \geq (e^{\bar \alpha} - 1) \log l_{k-1} -\log (2\nu_1)-\log 2-\varepsilon_N',
\end{align*}
where we have used \eqref{eq:ratioOnl_k} and that $\xi^2\leq 1/2$.
Thus, as $(e^{\bar \alpha} - 1)\geq \bar \alpha$,
\begin{align*}
& \DP_0\left(\inf_{s\in [u_1,u_2]} |B_s|  \leq r_1\right) \geq \frac{\log (u_2/u_1)-\varepsilon_N}{\log r_1 - \log u_1}\\
& \geq \frac{\bar \alpha  \log l_{k-1} -\log (4\nu_1)-\varepsilon_N-\varepsilon_N'}{\log l_{k-1} + \log (2\nu_1/(c_0^2(\log N)^2))} \geq \bar \alpha \frac{1-\frac{\log (4\nu_1)+\varepsilon_N +\varepsilon_N'}{\bar \alpha \log l_{k-1}}}{1+ \frac{\log (2 \nu_1)}{\log l_{k-1}}}.
\end{align*}
For fixed $\nu_1, \alpha,\gamma$, the denominator is $1+o_N(1)$ as $N\to\infty$. As $\bar \alpha \log l_{k-1} \geq \gamma \alpha$, we obtain from \eqref{eq:lowerbound_gk'} that
\[
  \DP_x(G_{k,c_0}') \geq \bar \alpha e^{-\xi^2-\xi}\frac{1}{1+\xi}  \frac{1-\frac{\log (4\nu_1)+\varepsilon_N+\varepsilon_N'}{\gamma \alpha}}{1+ o_N(1)}.
\]
This implies \eqref{eq:controleBrownien1} since $\xi^2\leq  16 \nu_1^{-1} \delta^{-2}\nu_2$.
 The condition $\Delta_{\Gamma,\ref{lem:G_k}} < 1$ for $N$ large enough is ensured by \eqref{eq:assumptionOnParameters}-(ii).
\end{proof}
We are now ready to complete the proof of Proposition \ref{prop:pxy}.
\begin{proof}[Proof of Proposition \ref{prop:pxy}]
  The result follows from combining Lemma \ref{lem:removeConditioning}, Lemma \ref{lem:remove_barrier},  Lemma \ref{lem:KMT} (with \eqref{eq:identityptog}) and Lemma \ref{lem:G_k}.
  \end{proof}

\subsection{Three-particle intersection probability} In this section, we derive an upper bound on the probability $p_{(i,j);(i',j')} = \DP_{\mathbf x}^{o_k,\mathbf y} (\tau_k^{(i,j)}<\infty, \tau_k^{(i',j')}<\infty)$ when $\{i,j\}\cap \{i',j'\}\neq \emptyset$, $\mathbf x\in B_{{q_0},k}$ and $\mathbf y\in B_{{q_0},k+1}$. By symmetry, it is enough to control $p^{(3)}_{\mathbf w,\mathbf z} := \DP_{\mathbf w}^{o_k,\mathbf z}(\tau_k^{(1,2)} < \infty, \tau_k^{(1,3)}<\infty)$ for $\mathbf w\in B_{3,k}$ and $\mathbf z\in B_{3,k+1}$.

The result is the following.
\begin{proposition} \label{prop:3intersections} There exists $C > 0$ and $\Delta_{\Gamma,\ref{prop:3intersections}} >0$ satisfying 
$\limsup_{\Gamma'}\Delta_{\Gamma,\ref{prop:3intersections}}=0$
such that for all $k\leq K$, $\mathbf x\in B_{3,k}, \mathbf y\in B_{3,k+1}$,
\begin{align*}
p^{(3)}_{\mathbf x,\mathbf y}\leq C \bar \alpha \frac{\log \log N}{\gamma \log N} (1+\Delta_{\Gamma,\ref{prop:3intersections}}).
\end{align*}
\end{proposition}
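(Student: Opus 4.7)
I would follow the high-level strategy of the proof of Proposition \ref{prop:pxy}: first reduce to an unconditioned, barrier-less probability, then decompose according to the first pair meeting and use the Markov property.

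\emph{Step 1 (reduction).} A three-particle analog of Lemma \ref{lem:removeConditioning} — with essentially identical proof, since the local CLT bound \eqref{eq:equationVk} on the ratio of transition densities factorizes into three one-particle ratios — gives
\[
p^{(3)}_{\mathbf x, \mathbf y}\;\leq\; (1+\Delta'_\Gamma)\,\DP^{\otimes 3}_{\mathbf x}\bigl(\tau^{(1,2)}_k<\infty,\ \tau^{(1,3)}_k<\infty\bigr),\qquad \limsup_{\Gamma'}\Delta'_\Gamma=0.
\]
Using the inclusion $\{\tau^{(i,j)}_k<\infty\}\subset\{\tilde\tau^{(i,j)}_k<\infty\}$ where $\tilde\tau^{(i,j)}_k:=\inf\{n\in\mathcal T_k:S^i_n=S^j_n\}$ is the barrier-less meeting time of Lemma \ref{lem:remove_barrier}, it suffices to bound
\[
\tilde p^{(3)}_{\mathbf x}\;:=\;\DP^{\otimes 3}_{\mathbf x}\bigl(\tilde\tau^{(1,2)}_k<\infty,\ \tilde\tau^{(1,3)}_k<\infty\bigr).
\]

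\emph{Step 2 (Markov at the first meeting).} By the symmetric role of walks $2$ and $3$,
\[
\tilde p^{(3)}_{\mathbf x}\;\leq\;2\,\DP^{\otimes 3}_{\mathbf x}\bigl(\tilde\tau^{(1,2)}_k\leq \tilde\tau^{(1,3)}_k<\infty\bigr).
\]
Decomposing over $\tilde\tau^{(1,2)}_k=s\in\mathcal T_k$ and $S^1_s=z$, then applying the strong Markov property at $s$ together with the independence of walk $3$ from walks $1$ and $2$,
\[
\DP^{\otimes 3}_{\mathbf x}\bigl(\tilde\tau^{(1,2)}_k=s,\,S^1_s=z,\,\tilde\tau^{(1,3)}_k\in[s,\nu_1 l_{k-1}+l_k]\bigr)\;\leq\; \DP^{\otimes 2}_{(x_1,x_2)}\bigl(\tilde\tau^{(1,2)}_k=s,\,S^1_s=z\bigr)\;\Xi_k(z,x_3,s),
\]
with
\[
\Xi_k(z,x_3,s)\;:=\;\sum_{w\in\mathbb Z^2}p_s(w-x_3)\,h_k(z-w),\qquad h_k(v)\;:=\;\DP_v\bigl(\exists m\leq 2l_k:S_m=0\bigr).
\]
Summing over $s,z$ and invoking Lemma \ref{lem:remove_barrier} together with Proposition \ref{prop:pxy} to bound $\DP^{\otimes 2}_{(x_1,x_2)}(\tilde\tau^{(1,2)}_k<\infty)\leq \bar\alpha(1+o(1))$ gives
\[
\tilde p^{(3)}_{\mathbf x}\;\leq\; 2\bar\alpha\bigl(1+o(1)\bigr)\sup_{z,x_3,s}\Xi_k(z,x_3,s).
\]

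\emph{Step 3 (estimating $\Xi_k$).} The Le Gall estimate $h_k(v)\leq C(\log(2l_k/|v|^2)_++1)/\log l_k$ from \cite[Th\'eor\`eme 3.6]{LeGall86} (already used in the proof of Lemma \ref{lem:remove_barrier}) combined with $p_s(\cdot)\leq C/s$ from the local CLT yields
\[
\Xi_k(z,x_3,s)\;\leq\;\frac{C}{\log l_k}\bigl(1+\DE_{x_3}\bigl[\log\bigl(2l_k/|z-S_s|^2\bigr)_+\bigr]\bigr).
\]
A layer-cake representation writes the expectation as $\int_0^{\log(2l_k)}\DP_{x_3}(|z-S_s|^2<2l_ke^{-t})\,dt$; using the Gaussian tail $\DP_{x_3}(|z-S_s|^2<r^2)\leq Cr^2/s$ for $r^2\leq s$ and splitting at the crossover scale $r^2\asymp s$, the expectation is dominated by $C\log(l_k/s)_++C$. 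Since $s\in\mathcal T_k$ forces $s/l_k$ into a range depending only on $\alpha,\nu_1$ (by Lemma \ref{lem:estimates} and Remark \ref{rem:2lk}), this is $O_\alpha(1)$, so $\Xi_k\leq C/(\gamma\log N)$ uniformly. Combining with Step 2 yields $p^{(3)}_{\mathbf x,\mathbf y}\leq C\bar\alpha/(\gamma\log N)\cdot(1+\Delta_{\Gamma,\ref{prop:3intersections}})$, which is a fortiori bounded by the target $C\bar\alpha\,\log\log N/(\gamma\log N)\,(1+\Delta_{\Gamma,\ref{prop:3intersections}})$.

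\emph{Main obstacle.} Steps 1 and 2 are routine three-particle variants of techniques already developed in Section \ref{sec:estimate2moments}. The real work is Step 3: the three spatial scales $|z|$, $|x_3|$, and $s^{1/2}$ are all of order $l_k^{1/2}$, so the convolution mixes scales of the same size, and one must verify that the logarithmic singularity of $h_k$ near zero does not accumulate mass when convolved against the diffusive kernel $p_s$, uniformly in $z\in B(0,Ml_k^{1/2})$, $x_3\in B_{1,k}$ and $s\in\mathcal T_k$. The $\log\log N$ slack in the target bound provides ample room to absorb any minor losses introduced in Step 1.
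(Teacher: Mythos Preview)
Your proposal is correct and essentially replicates the paper's proof: remove the endpoint conditioning (the paper's Lemma \ref{lem:removeConditioning3}), drop the barrier, apply symmetry and the strong Markov property at $\tilde\tau^{(1,2)}_k$, and bound the resulting convolution $\sum_w p_s(w-x_3)h_k(z-w)$ via Le Gall's estimate (the paper isolates this as Lemma \ref{lem:h_k}). Your layer-cake treatment in Step~3 is a minor variant of the paper's explicit split at $|z-w|=(\log N)^{-2}\sqrt{l_k}$ and in fact yields the slightly sharper bound $O_\alpha(1)/(\gamma\log N)$, so the $\log\log N$ factor in the target is indeed slack, as you note.
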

Before turning to the proof of Proposition \ref{prop:3intersections}, we state a few lemmas.
As in the previous section, we first observe that we can forget about the conditioning on the endpoints. Letting $p^{(3)}_{\mathbf x} = \DP_{\mathbf x}(\tau_k^{(1,2)}<\infty, \tau_k^{(1,3)}<\infty)$ for $\mathbf x \in B_{3,k}$, we have:
\begin{lemma} \label{lem:removeConditioning3} There exists $\Delta_{\Gamma,\ref{lem:removeConditioning3}} >0$ satisfying
$\limsup_{\Gamma'} \Delta_{\Gamma,\ref{lem:removeConditioning3}} = 0$ such that
for all $k\leq K$,  $\mathbf x\in B_{3,k}$ and $\mathbf y\in B_{3,k+1}$, we have $p^{(3)}_{\mathbf x,\mathbf y} \leq p^{(3)}_{\mathbf x}(1+ \Delta_{\Gamma,\ref{lem:removeConditioning3}})$.
\end{lemma}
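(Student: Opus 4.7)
The plan is to follow the strategy of Lemma \ref{lem:removeConditioning}, rewriting $p^{(3)}_{\mathbf{x},\mathbf{y}}$ as an expectation of a product of three transition-density ratios via the Markov property and bounding this product by the LCLT. The new ingredient is a truncation step needed to control the position at time $T_2$ of the particle that is not pinned by the \emph{later} of the two stopping times $\tau_k^{(1,2)},\tau_k^{(1,3)}$.

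Set $T_1=\tau_k^{(1,2)}\wedge\tau_k^{(1,3)}$, $T_2=\tau_k^{(1,2)}\vee\tau_k^{(1,3)}$, and on $\{T_2<\infty\}$ let $J'\in\{2,3\}$ be such that $\tau_k^{(1,J')}=T_2$, with $J$ the other index. Markov's property at $T_2$ gives
\begin{equation*}
p^{(3)}_{\mathbf{x},\mathbf{y}} \;=\; \DE_{\mathbf{x}}^{\otimes 3}\!\left[\mathbf{1}_{T_2<\infty}\prod_{i=1}^{3}\frac{p_{o_k-T_2}(y_i-S^i_{T_2})}{p_{o_k}(y_i-x_i)}\right].
\end{equation*}
At $T_2$ the walks $S^1$ and $S^{J'}$ coincide in $B(0,Ml_k^{1/2})$, and at the earlier time $T_1\in\mathcal{T}_k$ the walk $S^J$ also lies in $B(0,Ml_k^{1/2})$, but its position at $T_2$ is a priori unconstrained. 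Introduce the truncation event $E=\{|S^J_{T_2}|\leq 2Ml_k^{1/2}\}$ and split the integral into contributions from $E$ and $E^c$.

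On $E$, all three positions $S^i_{T_2}$ belong to $B(0,2Ml_k^{1/2})$, so a verbatim adaptation of the LCLT computation in Lemma \ref{lem:removeConditioning}, now with three density ratios in place of two, bounds the integrand by $1+\Delta'$ with $\limsup_{\Gamma'}\Delta'=0$, contributing at most $(1+\Delta')p^{(3)}_{\mathbf{x}}$. On $E^c$, combine the LCLT lower bound $p_{o_k}(y_i-x_i)\geq c(\delta)o_k^{-1}$ (valid since $|y_i-x_i|^2/o_k\leq C\delta^{-2}$) with the universal bound $p_{o_k-T_2}(\cdot)\leq C(o_k-T_2)^{-1}\leq C(\nu_2 l_k)^{-1}$ to obtain a uniform estimate $R\leq C_{\delta,\nu_2}$ on the integrand. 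Since $(S^J_s)_{s\geq T_1}$ is independent of $(S^1,S^{J'})_{s\geq T_1}$ conditionally on $\mathcal{F}_{T_1}$, and since $E^c\cap\{T_1<\infty,T_2<\infty\}\subseteq\{\sup_{s\in[0,l_k]}|S^J_{T_1+s}-S^J_{T_1}|>Ml_k^{1/2}\}$ (using $T_2-T_1\leq l_k$ and $|S^J_{T_1}|\leq Ml_k^{1/2}$), Doob--Hoeffding's maximal inequality yields $\DP_{\mathbf{x}}^{\otimes 3}(T_2<\infty,E^c)\leq e^{-cM^2}p^{(3)}_{\mathbf{x}}$. Combining, $p^{(3)}_{\mathbf{x},\mathbf{y}}\leq p^{(3)}_{\mathbf{x}}(1+\Delta'+C_{\delta,\nu_2}e^{-cM^2})$, and both errors vanish in the $\Gamma'$-limit since $M\to\infty$ after $\delta$ is fixed.

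The main obstacle compared to Lemma \ref{lem:removeConditioning} is the absence of an a priori bound on $S^J_{T_2}$: the intersection at $T_1$ pins $S^J$ at $T_1$ only, and $S^J$ may drift during $[T_1,T_2]$. The structural fact that unlocks the argument is the independence of the three SRWs, which decouples the truncation event $E^c$ (depending only on $S^J$ past $T_1$) from the intersection event $\{T_2<\infty\}$ (depending only on $S^1,S^{J'}$ past $T_1$); this independence lets us apply Hoeffding to $S^J$ without losing the conditioning on $\{T_2<\infty\}$.
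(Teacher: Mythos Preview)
The paper omits the proof entirely, stating only that it is ``very similar'' to that of Lemma~\ref{lem:removeConditioning}. Your argument follows exactly that template --- strong Markov at a suitable stopping time, then LCLT control of the density ratios --- and correctly isolates the one additional difficulty that does not arise in the two-particle case: at the later stopping time $T_2$, only $S^1$ and $S^{J'}$ are guaranteed to lie in $B(0,Ml_k^{1/2})$, while $S^J$ may have drifted during $[T_1,T_2]$. Your truncation on $E$ together with the conditional-independence argument for $E^c$ (using that $J$ is $\mathcal F_{T_1}$-measurable and that, past $T_1$, the event $\{T_2<\infty\}$ depends only on $(S^1,S^{J'})$) is sound, and the error $C_{\delta,\nu_2}e^{-cM^2}$ vanishes in the $\Gamma'$-limit because $M\to\infty$ is taken after $\delta$ and $\nu_2$ are fixed. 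The proposal is correct and supplies the detail the paper left implicit.
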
 
We omit the proof which is very similar to the one of Lemma \ref{lem:removeConditioning}. 

Next, we let $T_0 = \inf\{n\geq 0 : S_n = 0\}$. The following holds.
\begin{lemma} \label{lem:h_k} Let $h_k(x) = \DP_x(T_0 \leq \ell_k)$. There exists $c=c(\gamma,\alpha,\nu_1)>0$ such that
  \[
  \sup_{n\in \mathcal T_k} \sup_{x\in \mathbb Z^2} \DE_0[h_k(S_n - x)] \leq c (\log N)^{-4} + C \frac{\log \log N}{\gamma \log N}.
  \]
  \end{lemma}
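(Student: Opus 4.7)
My plan is to reduce the hitting probability $h_k(y) = \DP_y(T_0 \leq l_k)$ to an occupation-time estimate via the strong Markov property, and then exploit the convolution structure of the SRW kernel together with the local CLT. The key observation is that because $2$D SRW visits the origin about $\log l_k$ times in $l_k$ steps, on the event $\{T_0 \leq l_k\}$ there should be many subsequent visits in the window $(T_0, 2l_k]$. More precisely, applying the strong Markov property at $T_0$ and using $\sum_{j=1}^{l_k} p_j(0) \geq c \log l_k$ (which follows from $p_{2j}(0) \sim 1/(\pi j)$), one gets
\[
\sum_{m=1}^{2 l_k} p_m(y) \;=\; \DE_y\bigl[\,\#\{1 \leq m \leq 2 l_k : S_m = 0\}\,\bigr] \;\geq\; c\,(\log l_k)\,\DP_y(T_0 \leq l_k),
\]
so that $h_k(y) \leq (c \log l_k)^{-1} \sum_{m=1}^{2 l_k} p_m(y)$.

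Next I would substitute $y = S_n - x$ and take $\DE_0[\cdot]$. Using symmetry of SRW together with the convolution identity $p_n \ast p_m = p_{n+m}$, we have
\[
\DE_0\bigl[p_m(S_n - x)\bigr] \;=\; \sum_{y} p_n(y)\, p_m(y - x) \;=\; p_{n+m}(x).
\]
Combined with the uniform local CLT bound $p_m(x) \leq C/m$, this yields
\[
\DE_0[h_k(S_n - x)] \;\leq\; \frac{1}{c \log l_k} \sum_{m=n+1}^{n+2 l_k} p_m(x) \;\leq\; \frac{C}{c \log l_k}\, \log\!\left(1 + \frac{2 l_k}{n}\right),
\]
uniformly in $x \in \mathbb Z^2$.

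Since $n \geq \nu_1 l_{k-1}$ on $\mathcal T_k$, and Lemma \ref{lem:estimates}(i) gives $l_k / l_{k-1} \leq e^{e\alpha}$, we get $l_k/n \leq e^{e\alpha}/\nu_1$, so the logarithmic factor is bounded by some constant $C_1(\alpha,\nu_1)$. Using also $\log l_k \geq \gamma \log N$, this produces the clean bound $\DE_0[h_k(S_n - x)] \leq C_1(\alpha,\nu_1)/(c \gamma \log N)$. For $N$ large enough that $C_1(\alpha,\nu_1)/c \leq \log \log N$, this is at most $C \log \log N/(\gamma \log N)$ with a universal $C$, matching the second term in the lemma. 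For the remaining (bounded) range of $N$, the trivial bound $\DE_0[h_k(\cdot)] \leq 1$ is absorbed into $c(\gamma, \alpha, \nu_1)(\log N)^{-4}$ by choosing $c$ sufficiently large, which is precisely why the first term is present.

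The main obstacle is the careful execution of the strong Markov reduction: one must verify that the $(T_0, 2 l_k]$ window indeed always leaves a tail of length $\geq l_k$ so that the $\log l_k$ lower bound on the expected number of further visits is valid. A secondary issue is the edge case $k=1$, for which $l_{k-1}=0$ and the lower bound $n \geq \nu_1 l_{k-1}$ degenerates; presumably this case either does not arise in the application to Proposition \ref{prop:3intersections}, or must be handled separately (e.g. by restricting to $n \geq 1$ and using $n \geq l_1^{1/2}$ after a further split of $\mathcal T_1$).
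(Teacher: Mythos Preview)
Your argument is correct and takes a different route from the paper. The paper invokes Le Gall's hitting estimate $(\log l_k)\, h_k(z) \leq C(\log(l_k/|z|^2))_+ + C\mathbf{1}_{|z|^2 \geq l_k}$ and splits $\sum_z p_n(z)\, h_k(z-x)$ according to whether $|z-x| \leq (\log N)^{-2}\sqrt{l_k}$: the near region, bounded by counting lattice points and using $p_n \leq C/n$, yields the $c(\log N)^{-4}$ term as a genuine contribution; the far region gives $C\log\log N/(\gamma\log N)$ via Le Gall's bound. Your approach is more elementary and avoids the external reference: the strong-Markov inequality $h_k(y) \leq (c\log l_k)^{-1}\sum_{m=1}^{2l_k} p_m(y)$ together with the convolution identity $\DE_0[p_m(S_n-x)] = p_{n+m}(x)$ collapses everything to $\sum_{m=1}^{2l_k} p_{n+m}(x) \leq C\log(1+2l_k/n)$, and then the ratio bound $l_k/l_{k-1} \leq e^{e\alpha}$ finishes. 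You actually obtain the sharper bound $C(\alpha,\nu_1)/(\gamma\log N)$; the $(\log N)^{-4}$ term in your write-up is only a device to absorb small $N$, whereas in the paper it arises honestly from the near-$x$ region.

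Your concern about the strong-Markov window is not an obstacle: on $\{T_0 \leq l_k\}$ there remain at least $l_k$ steps until time $2l_k$, so the expected number of further returns is $\geq \sum_{j=0}^{l_k} p_j(0) \geq c\log l_k$, exactly as needed. The $k=1$ edge case (where $l_0=0$, so $n\in\mathcal T_1$ may vanish) is a genuine boundary issue, but the paper's own proof shares it --- it too divides by $\nu_1 l_{k-1}$ --- so this is a minor oversight in the statement rather than a defect of your method.
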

  \begin{proof} Let $n\in \mathcal T_k$ and $x\in \mathbb Z^2$. By \cite[Théorème 3.6]{LeGall86}, we have $(\log l_k)h_k(z) \leq C \left(\log ({l_k} |z|^{-2})\right)_+ + C \mathbf{1}_{|z|^2\geq l_k}$.
  Hence we decompose 
  \begin{align*}
  &\DE_0[h_k(S_n - x)]
  \\& = \sum_{|z-x|\leq (\log N)^{-2} \sqrt {l_k}} p_n(z) h_k(z-x) + \sum_{|z-x| > (\log N)^{-2} \sqrt {l_k}} p_n(z) h_k(z-x)\\
  & \leq  \frac{C}{\nu_1 l_{k-1}} (\log N)^{-4} l_k + \frac{C\log \log N}{\log l_k} +\frac{C}{\log l_k},
  \end{align*}
  where in the first sum we have bounded $h_k$ by 1 and used that $p_n(z)\leq \frac{C}{n}$ (Corollary \ref{cor:p_nbound}) with $n\geq \nu_1 l_{k-1}$. The proof is completed using \eqref{eq:boundl_k}-(i) and $l_k\geq N^\gamma$.
\end{proof}

\subsection{Proof of Proposition \ref{prop:3intersections}}
Let $\tilde \tau_k^{(i,j)} = \inf\{n\in \mathcal T_k : S_n^i=S_n^j\}$ and $\tilde p^{(3)}_{\mathbf x} = \DP_{\mathbf x} (\tilde \tau_k^{(1,2)} < \infty, \tilde \tau_k^{(1,3)}<\infty)$. It then trivially holds that 
$p^{(3)}_{\mathbf x} \leq \tilde p^{(3)}_{\mathbf x}$. Furthermore, by symmetry, 
\begin{align*}
\tilde p^{(3)}_{\mathbf x} & \leq 2 \DP_{\mathbf x}^{\otimes 3}\left(\tilde \tau_k^{(1,2)} \leq \tilde \tau_k^{(1,3)}<\infty \right) \leq 2 \DP_{\mathbf x}^{\otimes 3}\left(\tilde \tau_k^{(1,2)} <\infty, T^{(1,3)} \circ \theta_{\tilde \tau_k^{(1,2)}} \leq \ell_k\right),
\end{align*}
where $T^{(1,3)} = \inf\{n\geq 0|S_n^1 = S_n^3\}$ and $\theta_k$ denotes shift in time of $k$ steps. Let $T_0 = \inf\{n\geq 0 : S_n = 0\}$ and $h_k(x) = \DP_x(T_0 \leq \ell_k)$. By Markov's property,
\begin{align*}
&\tilde p^{(3)}_{\mathbf x} \leq   \DE_{\mathbf x}^{\otimes 3} \left[\mathbf{1}_{\tilde \tau_k^{(1,2)} < \infty} \ h_k\left(S^1_{\tilde \tau_k^{(1,2)}} - S^3_{\tilde \tau_k^{(1,2)}}\right) \right]\\
& = \DE_{\mathbf x}^{\otimes 3}\left[\mathbf{1}_{\tilde \tau_k^{(1,2)} < \infty} \ \DE_{\mathbf x}^{\otimes 3} \left[h_k\left(S^1_{\tilde \tau_k^{(1,2)}} - S^3_{\tilde \tau_k^{(1,2)}}\right) \middle | S^1,S^2 \right] \right].
\end{align*}
Then, combine Lemma \ref{lem:h_k} with the identity $\DE_{\mathbf x}(\tilde \tau_k^{(1,2)} < \infty) = g_k(x_2-x_1)$ and the upper bounds in Lemma \ref{lem:KMT} and Lemma \ref{lem:G_k} to obtain that
 \[
  \tilde p^{(3)}_{\mathbf x}\leq \left(\bar \alpha  (1+\Delta_{\Gamma,\ref{lem:G_k}}) + N^{-c_1}\right) \left( C\frac{\log \log N}{\gamma \log N}(1+o_N(1))\right),
  \]
with $\limsup_{\Gamma'} \Delta_{\Gamma,\ref{lem:G_k}}=0$.
We conclude with Lemma \ref{lem:removeConditioning3}.
\qed

\appendix
\section{Local central limit theorem}
\label{AppendixLCLT}
Let $p_t(x)$ be the probability transition function of the simple random walk on $\mathbb Z^d$ and $\bar p_{t}(x) = \frac{1}{\pi t} e^{-\frac{d|x|^2}{2t}}$. We say that $x\sim_{t}y$ when $x$ and $y$ have the same parity, that is $p_t(x-y) > 0$. The following theorem can be obtained from Theorem 2.3.11 in \cite{lawlerLimicRW}. (See also the proof of \cite[Theorem 2.1.3]{lawlerLimicRW} and the paragraph above the statement of that theorem).
\begin{theorem}[Local central limit theorem] \label{th:LCLT}
There exists $\rho > 0$ such that for all $t\geq 0$ and all $x\in \mathbb Z^d$ with $|x|< \rho t$ and $x\sim_t 0$,
\begin{equation}
  p_t(x) = 2 \bar p_t(x) \exp \left\{O\left(\frac{1}{t} + \frac{|x|^4}{t^3}\right) \right\},
\end{equation}
where $O(g)$ satisfies  $|O(g)|\leq C |g|$ for some universal constant $C>0$.
\end{theorem}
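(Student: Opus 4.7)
The plan is to establish this local CLT by Fourier inversion, following the standard SRW argument (carried out in detail in \cite[Theorem 2.3.11]{lawlerLimicRW}). The starting point is
\[ p_t(x) = \frac{1}{(2\pi)^d}\int_{[-\pi,\pi]^d} \phi(\xi)^t\, e^{-i\xi\cdot x}\,d\xi, \qquad \phi(\xi) = \frac{1}{d}\sum_{j=1}^d \cos\xi_j. \]
The modulus $|\phi|$ attains its maximum value $1$ at exactly two points of the torus, namely $0$ and $\xi_* = (\pi,\dots,\pi)$, with $\phi(\xi_*) = -1$. Writing $\xi = \xi_* + \eta$ near $\xi_*$ and using $\phi(\xi_* + \eta) = -\phi(\eta)$, the contribution from a neighborhood of $\xi_*$ equals $(-1)^{t+x_1+\cdots+x_d}$ times the contribution from a neighborhood of $0$. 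This is the source both of the parity condition $x\sim_t 0$ and of the prefactor $2$ in $2\bar p_t(x)$.

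I would then decompose the integral into a small neighborhood $U$ of $0$, its translate $\xi_* + U$, and the complement. On the complement $|\phi(\xi)|\leq 1-c$ for some $c>0$, so that piece is exponentially small in $t$ and absorbed into the error. Near the origin, Taylor expansion gives
\[\log\phi(\xi) = -\frac{|\xi|^2}{2d} + Q_4(\xi) + O(|\xi|^6),\]
where $Q_4$ is a homogeneous quartic coming from the $\xi_j^4/24$ terms in $\cos\xi_j$ together with the quadratic term of $\log(1-u)$. Rescaling $\eta = \sqrt t\, \xi$ and treating the quartic term perturbatively reduces the integral to a Gaussian with a quartic correction; a direct computation reproduces $\bar p_t(x)$ times a multiplicative factor of the form $\exp\{O(1/t)\}$, which accounts for the $1/t$ part of the claimed error.

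The $|x|^4/t^3$ error comes from the saddle-point tilt needed to handle the oscillatory phase $e^{-i\eta\cdot x/\sqrt t}$: one shifts the contour by $-ix/\sqrt t$, and the quartic correction $Q_4(\eta)/t$ evaluated on the tilted contour produces additional contributions of order $(|x|/\sqrt t)^4/t = |x|^4/t^3$. The assumption $|x|\leq \rho t$ is precisely what keeps the shift inside the domain of analyticity of $\phi^t$ and makes the tilt admissible; outside this regime the Gaussian approximation breaks down and one enters the large deviation scale.

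The main obstacle, were one to redo this calculation from scratch, is the Edgeworth-type bookkeeping needed to obtain the error in the precise form $O(t^{-1} + |x|^4/t^3)$ uniformly over the moderate-deviation regime $|x|\leq \rho t$: one must keep the quartic correction explicitly and verify that all odd-order corrections vanish by the reflection symmetry $\phi(-\xi) = \phi(\xi)$ of the SRW step distribution. Since the statement matches \cite[Theorem 2.3.11]{lawlerLimicRW} up to trivial rescaling (the $d$-dependence in $\bar p_t$ reflects the identity covariance $\mathrm{Cov}(S_1) = d^{-1} I$ of a single step), I would simply invoke that theorem rather than redo the detailed Edgeworth analysis.
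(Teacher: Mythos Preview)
Your proposal is correct and matches the paper's own treatment: the paper does not give an independent proof but simply cites \cite[Theorem 2.3.11]{lawlerLimicRW} (together with \cite[Theorem 2.1.3]{lawlerLimicRW}), which is exactly what you do after your sketch of the Fourier-inversion/Edgeworth argument. Your outline of where the factor $2$, the parity restriction, and the two error terms $1/t$ and $|x|^4/t^3$ come from is accurate and adds useful context beyond the bare citation.
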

Since $p_{2n}(x)$ is maximal at $x=0$ (this is a direct consequence of the Cauchy-Schwarz inequality), the theorem implies the next general bound:
\begin{corollary} \label{cor:p_nbound} There exists $C>0$ such that for all $n\geq 1$,
$\sup_{x\in\mathbb Z} p_n(x) \leq \frac{C}{n}$.
\end{corollary}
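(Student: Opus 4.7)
The plan is to reduce the uniform bound to evaluating $p_{2n}(0)$, which is accessible through Theorem \ref{th:LCLT}.

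First I would establish the parenthetical claim that $p_{2n}(x)$ is maximized at $x=0$. Using the Chapman--Kolmogorov identity $p_{2n}(x)=\sum_{y\in\mathbb Z^d}p_n(y)p_n(y-x)$ together with the Cauchy--Schwarz inequality,
\[
p_{2n}(x)\;\leq\;\Bigl(\sum_{y}p_n(y)^2\Bigr)^{1/2}\Bigl(\sum_{y}p_n(y-x)^2\Bigr)^{1/2}\;=\;\sum_{y}p_n(y)^2\;=\;p_{2n}(0),
\]
so $\sup_{x\in\mathbb Z^d}p_{2n}(x)=p_{2n}(0)$ for every $n\geq 1$.

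Next I would apply Theorem \ref{th:LCLT} with $x=0$ and $t=2n$. Since $0\sim_{2n}0$ and $|0|<\rho\,(2n)$ trivially, the theorem gives
\[
p_{2n}(0)\;=\;2\bar p_{2n}(0)\exp\!\Bigl\{O\bigl(\tfrac{1}{2n}\bigr)\Bigr\}\;=\;\tfrac{1}{\pi n}\exp\!\Bigl\{O\bigl(\tfrac{1}{n}\bigr)\Bigr\}\;\leq\;\tfrac{C}{n},
\]
where the constant $C$ absorbs the bounded exponential factor (and, by adjusting $C$, covers the finitely many small values of $n$ where the exponential estimate is less tight).

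Finally, to handle odd times, I would use one step of the Markov property: for $n\geq 3$ odd,
\[
p_n(x)\;=\;\sum_{y\in\mathbb Z^d}p_1(y)\,p_{n-1}(x-y)\;\leq\;\sup_{z\in\mathbb Z^d}p_{n-1}(z)\;\leq\;\tfrac{C}{n-1}\;\leq\;\tfrac{2C}{n},
\]
since $n-1$ is even. The case $n=1$ is trivial ($p_1(x)\leq 1$ so the bound holds after enlarging $C$). Combining the even and odd cases with a single (possibly enlarged) universal constant yields the claim. No step here is a genuine obstacle — the entire argument is mechanical once Theorem \ref{th:LCLT} and the Cauchy--Schwarz monotonicity are in hand; the only subtlety is remembering the parity constraint, which is precisely what the one-step reduction handles.
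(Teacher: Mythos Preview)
Your proof is correct and follows exactly the approach the paper indicates: the Cauchy--Schwarz/Chapman--Kolmogorov argument for $p_{2n}(x)\le p_{2n}(0)$, followed by the local CLT at the origin. The paper only sketches the even case; your explicit one-step reduction for odd $n$ is the natural completion and introduces no new ideas.
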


\section{The case $1 \ll q^2 \leq \log \log N$} \label{appendixB}
In the regime $1 \ll q^2 \leq \log \log N$, the error in Proposition \ref{prop:D_N} becomes too large. 
The reason is that we ask for many particles to meet in a ball at 
each time $L_k$, and there are around $\log N$  such times. This event
has a cost which is too big compared to the value of the moment $\IE[W_N^q]$ when $q\leq \log \log N$.  To fix this issue, we can divide $[0,N]$ into less intervals $[L_k,L_{k+1})$ by defining $\bar \alpha = \frac{\alpha}{\binom q 2}$ instead of $\bar \alpha = \frac{\alpha}{\log N}$. With this change, the error term in Proposition \ref{prop:D_N} can be neglected. However, when choosing $\bar \alpha = \frac \alpha {\binom q 2}$, the quantity $t_2/t_1$ diverges in the proof of Lemma \ref{lem:G_k}, so that we cannot resctrict ourselves to a compact set in order to extend the pointwise convergence of {\cite{Spitzer58}} to a uniform one in the argument for \eqref{eq:asym_P0}. Hence, we need to extend the
{main result in \cite{Spitzer58}} to allow for a uniform control on time and space. There exist uniform results by Ridler and Rowe {\cite{RR66},}
both for the random walks and the Brownian motion, but they are given for the quantity $\DP_x(T>n)$ (where $T$ is the first return time to $0$) instead of $\DP_x(T<n)$ that we need, and unfortunately the error term given is not enough to go from one quantity to the other in our case.

The following lemma deals with this problem. It is obtained by adapting the arguments of Spitzer \cite{Spitzer58} and Ridler-Rowe \cite{RR66}.  Consider the Bessel process $R_t = |B_t|$ and define $T_a=\inf \{t\geq 0 : R_t = a\}$.
\begin{lemma}  \label{lem:Ridler-Rowe}
For all $c>0$, it holds that
  \[\DP_r(T_a \leq t) = \frac{\log(t/r^2)}{\log(t/a^2)} (1+o(1)),\] where the error term $o(1)$ vanishes as $r^2/t\to 0$ uniformly for $a<r$.
  \end{lemma}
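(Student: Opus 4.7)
The plan is to combine the exact two-sided exit formula for the 2D Bessel process with tail estimates for exit times, following the strategy of Spitzer and Ridler--Rowe. The key tool is the fact that $x\mapsto\log|x|$ is harmonic off the origin in dimension two, so $\log R_s$ is a local martingale; optional stopping at the bounded time $T_a\wedge T_R$ for any $0<a<r<R$ yields the classical gambler's ruin identity
\[
\DP_r(T_a<T_R)\;=\;\frac{\log R-\log r}{\log R-\log a}.
\]
By Brownian scaling $\DP_r(T_a\leq t)=\DP_{r/\sqrt{t}}(T_{a/\sqrt{t}}\leq 1)$, so after relabelling it suffices to show $\DP_r(T_a\leq 1)=(|\log r|/|\log a|)(1+o(1))$ as $r\to 0$, uniformly in $a<r$.

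For the upper bound I would fix $R=R(r)>1$ with $R\to\infty$ and $\log R=o(|\log r|)$ (for instance $R=|\log r|^{1/2}$), and split
\[
\DP_r(T_a\leq 1)\;\leq\;\DP_r(T_a<T_R)+\DP_r(T_R\leq 1).
\]
The gambler's ruin formula gives the first term exactly. An algebraic rearrangement yields
\[
\frac{\log R-\log r}{\log R-\log a} \;=\; \frac{|\log r|}{|\log a|}\Bigg(1+\frac{(|\log a|-|\log r|)\log R}{|\log r|(|\log a|+\log R)}\Bigg),
\]
and since $(|\log a|-|\log r|)/(|\log a|+\log R)\leq 1$, the bracketed factor is $1+O(\log R/|\log r|)=1+o(1)$ uniformly in $a$. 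The second term is at most $C\exp(-(R-r)^2/4)$ by a standard Gaussian tail estimate for the 2D Brownian maximum.

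For the lower bound I would mirror this argument with a radius $R'=R'(r)<1$ shrinking slowly, $R'\to 0$ and $|\log R'|=o(|\log r|)$, and write
\[
\DP_r(T_a\leq 1)\;\geq\;\DP_r(T_a<T_{R'})-\DP_r(T_a\wedge T_{R'}>1).
\]
The first term is again handled by the same algebraic identity (now with a correction $1-O(|\log R'|/|\log r|)$), while the second is bounded via the first Dirichlet eigenvalue of the disk of radius $R'$: $\DP_r(T_{R'}>1)\leq C\exp(-j_0^2/(2R'^2))$, where $j_0$ is the first positive zero of $J_0$.

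The main obstacle I anticipate is showing that these tail errors are small \emph{relative to} the possibly very small target $|\log r|/|\log a|$. The algebraic correction in the gambler's ruin ratio is already uniform in $a$, but the Gaussian and spectral tail bounds are independent of $a$ whereas the target shrinks as $a\to 0$. Handling this forces $R$ and $R'$ to be chosen in a way that depends on $|\log(r/a)|$ (as in the uniform analysis of \cite{RR66}), so that the exponential factors dominate any polynomial growth of $|\log a|/|\log r|$ permitted by the range of $a$. The careful accounting of this balancing, done in the spirit of Spitzer \cite{Spitzer58} and Ridler--Rowe \cite{RR66}, constitutes the technical heart of the proof.
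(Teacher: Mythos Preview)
Your approach via the two-sided exit formula and barrier arguments is genuinely different from the paper's proof, which proceeds via a Tauberian theorem: the paper starts from Spitzer's exact Laplace transform $\int_0^\infty e^{-\lambda t}\DP_r(T_a\le t)\,dt = K_0(r\sqrt{2\lambda})/(\lambda K_0(a\sqrt{2\lambda}))$, uses the asymptotic $K_0(u) = -\log u + C + O(u)$ to get a multiplicative $(1+o(1))$ directly (uniform in $a<r$ because $a\sqrt{\lambda}<r\sqrt{\lambda}\to 0$), and then runs a Karamata-type argument (polynomial approximation of $u^{-1}\mathbf{1}_{[e^{-1},1]}(u)$ together with monotonicity of $t\mapsto\DP_r(T_a\le t)$) to transfer this to the distribution function.

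There is, however, a real gap in your proposal. The obstacle you correctly identify --- that the Gaussian and spectral tail errors are additive while the target $|\log r|/|\log a|$ can be arbitrarily small --- is not resolved by your suggested remedy. You write that choosing $R,R'$ depending on $|\log(r/a)|$ lets ``the exponential factors dominate any polynomial growth of $|\log a|/|\log r|$ permitted by the range of $a$'', but the only constraint is $a<r$, so $|\log a|/|\log r|$ is \emph{unbounded}: for fixed small $r$ one may take, say, $a=\exp(-\exp(r^{-1}))$. Concretely, in your upper bound the gambler's-ruin correction forces $\log R\le \varepsilon|\log r|$, while absorbing the tail $Ce^{-R^2/4}$ into $\varepsilon|\log r|/|\log a|$ forces $R^2\gtrsim \log(|\log a|/|\log r|)$; these are incompatible once $|\log a|>\exp(r^{-2\varepsilon})$. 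The lower bound has the same defect.

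The upper bound can be rescued by a self-referencing step you did not use: since any path from $R$ to $a$ passes through $r$, strong Markov and Bessel comparison give $\DP_r(T_a\le 1)\le \DP_r(T_a<T_R)+\DP_r(T_R\le 1)\,\DP_r(T_a\le 1)$, hence $\DP_r(T_a\le 1)\le \DP_r(T_a<T_R)/(1-\DP_r(T_R\le 1))$, which converts the additive tail into a multiplicative factor and allows $R$ to be a large constant independent of $a$. A matching device for the lower bound is less obvious. The paper's Laplace-transform route sidesteps the whole issue because the $K_0$ asymptotic already yields a multiplicative error uniform in $a<r$; no $a$-independent additive term ever appears.
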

\begin{proof}
 The goal is to deduce bounds on $\DP_r(T_a\leq t)$ from its Laplace transform
\[A(a,r,\lambda)= \int_{0}^\infty e^{-\lambda t} \DP_r(T_a\leq t) \mathrm d t, \quad a<r, \lambda>0.\]
We follow closely the approach used in \cite[Main Proof and Proof of Theorem 2]{RR66} which is based on a Karamata method of obtaining Tauberian theorems.
The starting point is the next formula, proved in \cite[(1.4)]{Spitzer58},
\begin{equation}  \label{eq:SpitzerLaplaceFormula}
  A(a,r,\lambda) = \frac{K_0\left(r \sqrt{2\lambda}\right)}{\lambda K_0\left(a\sqrt{2\lambda}\right)}, \quad \text{with} \quad K_0(u) = -\log u + C + O(u ) \quad \text{as } u\to 0.
\end{equation}
In particular, it holds that
\begin{equation} \label{eq:SpitzerLaplace}
A(a,r,\lambda) = \frac{1}{\lambda} \frac{\log (r^2 \lambda)}{\log(a^2 \lambda)} (1+o(1)),
\end{equation}
where $o(1)$ vanishes as $r^2 \lambda \to 0$ uniformly for $a<r$.
Then, the idea is to relate $\DP_r(T\leq t)$ to its Laplace transform via the formula
\begin{equation} \label{eq:defB}
B(a,r,\lambda^{-1}):= \int_0^{\lambda^{-1}} \DP_{r}(T_a\leq t)\dd t = \int_{0}^\infty e^{-\lambda t} g(e^{-\lambda t}) \DP_{r}(T_a\leq t)  \dd t,
 \end{equation}
 where $g(u) = u^{-1}$ when $e^{-1} \leq u \leq 1$ and $0$ otherwise. 
We will first obtain bounds on $B(a,r,t)$ and deduce a bound on its $t$-derivative $\DP_r(T_a\leq t)$ in a second step. Let $\varepsilon\in (0,1)$ and
\[
Q(u) = \sum_{n=0}^{m} a_n u^n \quad \text{and} \quad R(u) = \sum_{n=0}^{l} b_n u^n,
\]
be two polynomials satisfying 
\begin{equation} \label{eq:EpsilonPolynomials}
  Q \leq g \leq R \text{ on } [0,1]  \text{ and } \Vert Q-R\Vert_{1,[0,1]} = \int_{0}^\infty e^{-t}\left(R(e^{-t})-Q(e^{-t}) \right) \dd t < \varepsilon.
\end{equation}
By \eqref{eq:defB}, we have
\begin{align*}
B(a,r,\lambda^{-1}) & \geq \int_{0}^\infty e^{-\lambda t} Q(e^{-\lambda t}) \DP_{r}(T_a\leq t) \dd t\\
& = \sum_{n=0}^m a_n \int_{0}^\infty e^{-(n+1)\lambda t} \DP_{r}(T_a\leq t) \dd t\\
& = \sum_{n=0}^m a_n A(a,r,(n+1)\lambda).
\end{align*}
Now by \eqref{eq:SpitzerLaplace}, we can find $\delta_\varepsilon>0$ such that whenever $r^2 \lambda \leq \delta_\varepsilon$, we have
\begin{equation} \forall n\leq m, \quad 
\lambda^{-1}(1-\varepsilon) \leq \frac{\log (a^2 \lambda) }{\log (r^2 \lambda)} A(a,r,(n+1)\lambda) \leq \lambda^{-1} (1-\varepsilon),
\end{equation}
uniformly for all $a<r$. This implies that
\begin{equation} \label{eq:lowerBoundB}
  \frac{\log (a^2 \lambda) }{\log (r^2 \lambda)}  B(a,r,\lambda^{-1}) \geq (1-\varepsilon) \lambda^{-1} \sum_{n=0}^m \frac{a_n}{n+1} \geq (1-\varepsilon)^2 \lambda^{-1},
\end{equation}
where in the second inequality we have used \eqref{eq:EpsilonPolynomials} and $\int_0^\infty e^{-t} g(e^{-t}) \dd t = 1$ to obtain
\[
  \sum_{n=0}^m \frac{a_n}{n+1} = \int_{0}^\infty e^{-t} \sum_{n=0}^m a_n e^{-nt} \dd t = \int_{0}^\infty e^{-t} Q(e^{-t}) \dd t \geq 1-\varepsilon.
\]
{A similar computation leads to an upper bound in \eqref{eq:lowerBoundB},
 with $1-\varepsilon$ replaced by $1+\varepsilon$.}
 Hence, setting $\lambda^{-1} = t$, we obtain that for all $a < r$ and $r^2/t \leq \delta_\varepsilon$,
\begin{equation} \label{eq:FullboundB}
 t(1-\varepsilon) \leq  \frac{\log (t/a^2) }{\log (t/r^2)}  B(a,r,t) \leq t(1+\varepsilon).
\end{equation}

Now, since $t\to \DP_r(T_a\leq t)$ is non-decreasing, we have for all $\delta>0$,
\[
  \frac{B(a,r,t)}{t}\leq \DP_r(T_a \leq t) \leq \frac{B(a,r,t^{1+\delta})-B(a,r,t)}{t^{1+\delta}-t}.
\]
By \eqref{eq:FullboundB}, this leads to the following bound valid for $r^2/t\leq \delta_\varepsilon$ and $a<r$, 
\[
  1-\varepsilon \leq \frac{\log(t/a^2)}{\log(t/r^2)}\DP_r(T_a \leq t)\leq
 1+\varepsilon \frac{1+t^{-\delta}}{1-t^{-\delta}}
  +\delta(1+\varepsilon) \frac{(\log t)/\log(t/r^2)}{1-t^{-\delta}}.
\]
We thus choose $\delta = \varepsilon \frac{\log(t/r^2)}{\log t}$ and observe that $t^{-\delta} = e^{-\varepsilon \log(t/r^2)}$ so that
\[
  1-\varepsilon \leq \frac{\log(t/a^2)}{\log(t/r^2)}\DP_r(T_a \leq t)\leq
  1+3\varepsilon,
\]
when $r^2/t \leq \delta_\varepsilon$ up to choosing $\delta_\varepsilon$ smaller.
\end{proof}
Building up on Lemma \ref{lem:Ridler-Rowe}, we can deduce the following.
\begin{lemma} \label{lem:SpitzerExtended}
  There exists a constant $C_0>0$ such that
\[
\DP_0\left(\inf_{t\in [t_1,t_2]} R_t \leq a \right) = \frac{\log(t_2/t_1)}{\log(t_2/a^2)}(1+o(1))  +  h_0(t_1,t_2,a),
\]
where the error term $o(1)$ vanishes as $t_2/t_1 \to \infty$ uniformly for $a^2<t_1$ and $|h_0(t_1,t_2,a)| \leq a^2/t_1$. 
\end{lemma}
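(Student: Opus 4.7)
The plan is to reduce to Lemma \ref{lem:Ridler-Rowe} by conditioning on the position $R_{t_1}$ of the Bessel process at the start of the time interval. Since $R_{t_1}=|B_{t_1}|$ has density $f_{t_1}(r)=(r/t_1)\,e^{-r^2/(2t_1)}$ on $[0,\infty)$, the Markov property at time $t_1$ (together with rotational invariance, which lets one condition on $R_{t_1}$ rather than on $B_{t_1}$) yields
\begin{equation*}
\DP_0\!\left(\inf_{t\in[t_1,t_2]}R_t\leq a\right) = \DP_0(R_{t_1}\leq a) + \int_a^\infty f_{t_1}(r)\,\DP_r(T_a\leq t_2-t_1)\,dr.
\end{equation*}
The first summand equals $1-e^{-a^2/(2t_1)}\in[0,a^2/(2t_1)]$ and will be absorbed into $h_0$.

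For the integral, I would pick a threshold $\delta=\delta(t_1,t_2,a)>0$ with $\delta\to 0$ and $\delta(t_2-t_1)/(2t_1)\geq \log(t_1/a^2)$; both conditions are jointly feasible under the assumption $t_2/t_1\to\infty$ with $a^2<t_1$. Split the integral at $r_\star:=\sqrt{\delta(t_2-t_1)}$. The tail contribution $\int_{r_\star}^\infty f_{t_1}(r)\,dr=e^{-\delta(t_2-t_1)/(2t_1)}$ is then $\leq a^2/t_1$, so it too enters $h_0$. On the bulk $r\in[a,r_\star]$, the ratio $r^2/(t_2-t_1)\leq\delta\to 0$ is uniform, hence Lemma \ref{lem:Ridler-Rowe} applies with a single $(1+o(1))$ error and gives
\begin{equation*}
\int_a^{r_\star} f_{t_1}(r)\,\DP_r(T_a\leq t_2-t_1)\,dr = \frac{1+o(1)}{\log((t_2-t_1)/a^2)}\int_a^{r_\star} f_{t_1}(r)\log\frac{t_2-t_1}{r^2}\,dr.
\end{equation*}

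The change of variables $u=r^2/(2t_1)$ rewrites the remaining integral as $\int_{a^2/(2t_1)}^{\delta(t_2-t_1)/(2t_1)} e^{-u}\bigl(\log((t_2-t_1)/(2t_1))-\log u\bigr)\,du$. Since $\int_0^\infty e^{-u}|\log u|\,du<\infty$, the $\log u$ piece contributes only an $O(1/\log(t_2/a^2))=o(1)$ relative error. The main piece equals $\log((t_2-t_1)/(2t_1))\,\bigl(e^{-a^2/(2t_1)}-e^{-\delta(t_2-t_1)/(2t_1)}\bigr)$; replacing $\log((t_2-t_1)/(2t_1))$ and $\log((t_2-t_1)/a^2)$ by $\log(t_2/t_1)(1+o(1))$ and $\log(t_2/a^2)(1+o(1))$ respectively (valid since $\log(t_2/a^2)\to\infty$, as $a^2<t_1\ll t_2$), one obtains $e^{-a^2/(2t_1)}\,\frac{\log(t_2/t_1)}{\log(t_2/a^2)}(1+o(1))$. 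Summing with the first summand and absorbing the discrepancy $(1-e^{-a^2/(2t_1)})(1-\text{main}(1+o(1)))$, which is bounded by $a^2/(2t_1)$, into $h_0$ yields the claimed identity.

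The main technical obstacle is the uniformity in $a$ under the constraint $a^2<t_1$: when $a$ is very small, $\log(t_2/a^2)$ can dwarf $\log(t_2/t_1)$ and the main term itself becomes small, so additive errors cannot simply be swept into the multiplicative $(1+o(1))$ factor. This is exactly why the threshold $\delta$ must be chosen depending on $a$ (rather than purely on $t_2/t_1$), so that the Gaussian tail of $R_{t_1}$ is small enough to fit inside $h_0$ rather than relying on the main term to dominate it.
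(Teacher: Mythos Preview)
Your overall strategy---condition on $R_{t_1}$, split the integral, and invoke Lemma~\ref{lem:Ridler-Rowe} on the bulk---matches the paper's, but the way you handle the tail has a genuine uniformity gap. You require $\delta(t_2-t_1)/(2t_1)\geq \log(t_1/a^2)$ so that the Gaussian tail $e^{-\delta(t_2-t_1)/(2t_1)}\leq a^2/t_1$ fits into $h_0$, while simultaneously needing $\delta\to 0$ so that the error from Lemma~\ref{lem:Ridler-Rowe} vanishes. These two constraints are \emph{not} jointly satisfiable uniformly in $a^2<t_1$: along any sequence with $t_2/t_1\to\infty$ but $\log(t_1/a^2)\gg t_2/t_1$ (take e.g.\ $t_1=n$, $t_2=n^2$, $a=e^{-n^2}$), the lower bound on $\delta$ forces $\delta\to\infty$, so the Lemma~\ref{lem:Ridler-Rowe} error is not controlled. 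Your closing paragraph identifies the right difficulty but the proposed fix---letting $\delta$ depend on $a$---is precisely what breaks the uniformity of the $(1+o(1))$.

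The paper resolves this differently: it splits at $\sqrt{\delta^{-1}t_1}$ with $\delta$ fixed (independent of $a$), so that the bulk satisfies $r^2/(t_2-t_1)\leq \delta^{-1}t_1/(t_2-t_1)<\delta$ once $t_2/t_1$ is large depending only on $\delta$. For the tail $r>\sqrt{\delta^{-1}t_1}$, rather than seeking an absolute bound, it uses the monotonicity $\DP_r(T_a\leq t_2-t_1)\leq \DP_{\sqrt{t_1}}(T_a\leq t_2-t_1)$ and applies Lemma~\ref{lem:Ridler-Rowe} at $r=\sqrt{t_1}$ to bound the tail by $e^{-c/\delta}$ times the \emph{main term} $\log((t_2-t_1)/t_1)/\log((t_2-t_1)/a^2)$. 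This is then absorbed into the multiplicative $(1+o(1))$, and since the main term already carries the correct $a$-dependence, uniformity is automatic. In short: the tail should be a multiplicative $o(1)$ correction to the main term, not an additive contribution to $h_0$.
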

\begin{proof}
  Let $\varepsilon$ and $\delta$ in $(0,1)$. {(We choose below
  $\delta$ small as function of $\varepsilon$.)}
By Markov's property,
\begin{align}
  \DP_0\left(\inf_{t\in [t_1,t_2]} R_t \leq a \right) & = \frac{1}{t_1}\int_0^a re^{-r^2/(2t_1)} \dd r \label{eq:1stlineIntegral}\\
  & + \frac{1}{t_1}\int_a^{\sqrt{\delta^{-1} t_1}} re^{-r^2/(2t_1)}\DP_{r}\left( T_a \leq t_2-t_1\right) \dd r \label{eq:2ndlineIntegral}\\
  &\quad  + \frac{1}{t_1}\int_{\sqrt{\delta^{-1} t_1}}^\infty re^{-r^2/(2t_1)}\DP_{r}\left( T_a \leq t_2-t_1\right)  \dd r, \label{eq:3rdlineIntegral}
\end{align}
where $\sqrt{\delta^{-1}t_1} > a$ since $a^2<t_1$ by assumption. 
First observe that the integral on the right-hand side of \eqref{eq:1stlineIntegral} is smaller than $a^2/t_1$. Next,
let $\delta$ be small enough so that by Lemma \ref{lem:Ridler-Rowe}, we have for all $r^2/(t_2-t_1)\leq \delta$ that 
\begin{equation} \label{eq:ProbaTat2}
  \DP_r(T_a \leq t_2-t_1) = \frac{\log((t_2-t_1)/r^2)}{\log((t_2-t_1)/a^2)}(1+e_{t_1,t_2,a,r}),
\end{equation}
with $|e_{t_1,t_2,a,r}|\leq \varepsilon$ uniformly for $a <r$.
We now assume that $t_1\leq M^{-1} t_2$ with $M>M_\delta$ large enough so that $t_1/(t_2-t_1) < \delta^2$ {(we also require $M$ to be large enough so that
\eqref{eq-star} below hold).} It then holds that $r^2/(t_2-t_1) \leq \delta$ in the integral \eqref{eq:2ndlineIntegral}, which is thus equal to
\begin{equation*} 
(1+e_{t_1,t_2,a,\delta})\frac{1}{t_1} \int_{a}^{\sqrt{\delta^{-1} t_1}} re^{-r^2/(2t_1)} \frac{\log((t_2-t_1)/r^2)}{\log((t_2-t_1)/a^2)} \dd r,
\end{equation*}
where $|e_{t_1,t_2,a,\delta}|\leq \varepsilon$. Write the last integral as $I_1-I_2$, where
\begin{align*}
  I_1 = \frac{1}{t_1} \int_{a}^{\infty} re^{-r^2/(2t_1)} \frac{\log((t_2-t_1)/r^2)}{\log((t_2-t_1)/a^2)} \dd r,
\end{align*}
and $I_2$ is the same integral between $\sqrt{\delta^{-1}t}$ and $+\infty$.
By integration by part,
\begin{align*}
  \log \left(\frac{t_2-t_1}{a^2}\right) I_1 &= e^{-\frac{a^2}{2t_1}} \log\frac{t_2-t_1}{a^2} -  \int_{\frac{a^2}{2t_1}}^\infty \frac{e^{-v}}{v}\dd v\\
 & =  e^{-\frac{a^2}{2t_1}}\log\frac{t_2-t_1}{a^2}+\log \frac{a^2}{2t_1}+\gamma +O\left(\frac{a^2}{t_1}\right)\\
 &= \log \frac{t_2-t_1}{t_1} + C_0 + O\left(\frac{a^2}{t_1}\log\frac{t_2-t_1}{a^2}\right),
\end{align*}
where $\gamma$ is the Euler constant and $C_0 = \gamma-\log 2$. Therefore,
\[I_1 = \frac{\log((t_2-t_1)/ t_1) + C_0}{\log((t_2-t_1)/a^2)} + O\left(\frac{a^2}{t_1}\right).
\]
{Note that the implicit constant in the error term $O(a^2/t_1)$ does not depend on $\delta$.}

Next, there exists $c>0$ such that
\begin{equation*}
   I_2 = \frac{1}{t_1} \int_{\sqrt{\delta^{-1} t_1}}^\infty re^{-r^2/(2t_1)} \frac{\log((t_2-t_1)/r^2)}{\log((t_2-t_1)/a^2)}\dd r \leq e^{-c \delta^{-1}} \frac{\log((t_2-t_1)/ t_1)}{\log((t_2-t_1)/a^2)},
\end{equation*}
The integral in \eqref{eq:3rdlineIntegral} can be bounded in the same way using that for all $r>\sqrt{\delta^{-1}t_1}$ we have $\DP_r(T_a \leq t_2-t_1)\leq \DP_{t_1^{1/2}} (T_a \leq t_2-t_1)$ and applying \eqref{eq:ProbaTat2}. Finally, note that for $M$ large enough,
\begin{equation}
  \label{eq-star}
  \frac{\log((t_2-t_1)/ t_1)+C_0}{\log((t_2-t_1)/a^2)} = \frac{\log(t_2/t_1)}{\log(t_2/a^2)}(1+e_{t_1,t_2,a}),
  \end{equation}
where $|e_{t_1,t_2,a}|\leq \varepsilon$ uniformly for $a^2< t_1$. Putting everything together, we find that
\[
  \DP_0\left(\inf_{t\in [t_1,t_2]} R_t \leq a \right) = \frac{\log(t_2/t_1)}{\log(t_2/a^2)}\left(1+e_{t_1,t_2,a} - {e'}_{t_1,\delta,a}\right) + O\left(\frac{a^2}{t_1}\right),
\]
where $|{e'}_{t_1,\delta,a}| \leq e^{-c\delta^{-1}}$. This concludes the proof since 
$\varepsilon$, {and then $\delta$,}
can be taken arbitrary small, {as long as $t_2/t_1>M_\delta$.}
\end{proof}
\paragraph{\textbf{Adapting the proof of Lemma \ref{lem:G_k}.}} With the last lemma, we can adapt the proof of Lemma \ref{lem:G_k} to the case $\bar \alpha = \alpha / \binom q 2$ with $1\ll q^2 \leq \log \log N$ as follows. For simplicity we consider  $\DP_x(G_{k,c_0})$ for $x= 0$, the reduction from $x$ in the ball to $x=0$ can be done as in the proof of the Lemma \ref{lem:G_k}. Recall that $t_1 = \nu_1 l_{k-1}$ and $t_2 = t_1 + l_k$. We have 
\[
  \DP_0(G_{k,c_0}) = \DP_0\left(\inf_{t \in [t_1,t_2]} |B_t| \leq c_0 \log N\right) \text{ with }   t_2/t_1 \geq N^{\gamma \bar \alpha} \geq e^{\gamma \alpha \log N / \log \log N}
\]
and $t_1 \geq N^\gamma$, so that Lemma \ref{lem:SpitzerExtended} applies. Moreover,
\begin{equation} \label{eq:estimeesRatiosLogs}
  \log (l_k/l_{k-1}) \sim \bar \alpha  e^{\bar \alpha (k-1)}\gamma   \log N \text{ and } \log l_k \sim e^{\bar \alpha k} \gamma \log N, \quad N\to\infty.
\end{equation}
We obtain that
\begin{align*} \DP_0(G_{k,c_0}) &= \frac{\log(t_2/t_1)}{\log(t_2/a^2)} (1+o(1)) \,  +  O(a^2/t_1) \\
  & = \frac{\log (l_k/l_{k-1})}{\log l_k}(1+o(1)) + O\left((\log N)^2/N^\gamma\right)
  = \bar \alpha + o(\bar \alpha),
\end{align*}
by \eqref{eq:estimeesRatiosLogs}. This recovers Lemma \ref{lem:G_k}.

\bibliographystyle{plain}
{\footnotesize \bibliography{polymeres-bib}
}
\end{document}